\newtheorem{lemma}{Lemma}[section]
\newtheorem{theorem}{Theorem}[section]
\newtheorem{remark}{Remark}[section]
\newtheorem{corollary}{Corollary}[section]
\numberwithin{equation}{section}
\newcommand{\dis}{\displaystyle}
\newcommand{\R}{\mathbb{R}}
\newcommand{\Z}{\mathbb{Z}}
\newcommand{\CD}{\mathcal{D}}
\newcommand{\CE}{\mathcal{E}}
\newcommand{\CF}{\mathcal{F}}
\newcommand{\CM}{\mathcal{M}}
\newcommand{\na}{\nabla}
\newcommand{\al}{\alpha}
\newcommand{\be}{\beta}
\newcommand{\ga}{\gamma}
\newcommand{\la}{\lambda}
\newcommand{\de}{\delta}
\newcommand{\si}{\sigma}
\newcommand{\pa}{\partial}
\newcommand{\eps}{\epsilon}
\newcommand{\De}{\Delta}
\newcommand{\lag}{\langle}
\newcommand{\rag}{\rangle}
\begin{document}

\title[{OPTIMAL DECAY RATES TO SOME CONSERVATION LAWS}]
{Optimal Decay Rates to Conservation Laws with Diffusion-Type Terms
of Regularity-gain and Regularity-loss}
\thanks{Keywords: Scalar conservation laws, weak
dissipation, time-weighted energy method, optimal time-decay rate,
large-time behavior.}

\author{Renjun Duan}
\address{(RJD)
Department of Mathematics, The Chinese University of Hong Kong,
Shatin, Hong Kong} \email{rjduan@math.cuhk.edu.hk}

\author{Lizhi Ruan}
\address{(LZR)
The Hubei Key Laboratory of Mathematical Physics, School of
Mathematics and Statistics, Central China Normal University, Wuhan
430079, P.R. China} \email{rlz@mail.ccnu.edu.cn}

\author{Changjiang Zhu}
\address{(CJZ)
The Hubei Key Laboratory of Mathematical Physics, School of
Mathematics and Statistics, Central China Normal University, Wuhan
430079, P.R. China} \email{cjzhu@mail.ccnu.edu.cn}
\thanks{Corresponding author. Email: cjzhu@mail.ccnu.edu.cn (C.J.
Zhu)}

\date{\today}
\maketitle

\thispagestyle{empty}

\begin{abstract}
We consider the Cauchy problem on nonlinear scalar conservation laws with a
diffusion-type source term related to an index $s\in \R$ over the
whole space $\R^n$ for any spatial dimension $n\geq 1$. Here,  the
diffusion-type source term behaves as the usual diffusion term over the low frequency domain while it admits on the high frequency part a feature of regularity-gain and
regularity-loss for $s< 1$ and $s>1$, respectively. For all
$s\in \R$, we not only obtain the $L^p$-$L^q$ time-decay estimates on the linear solution
semigroup  but also establish the global existence and optimal
time-decay rates of small-amplitude classical solutions to the
nonlinear Cauchy problem. In the case of
regularity-loss, the time-weighted energy method is introduced to
overcome the weakly dissipative property of the equation. Moreover,
the large-time behavior of solutions asymptotically tending to the
heat diffusion waves is also studied. The current results have
general applications to several concrete models arising from
physics.
\end{abstract}


2000 Mathematics Subject Classification: 35S05, 35S10, 35B40.

\tableofcontents


\section{Introduction}

\subsection{Main results}
In this paper, we consider the Cauchy problem on a scalar
conservation law with a diffusion-type source term, taking the form of
\begin{equation}\label{CP.eq}
\left\{\begin{array}{l}
 \dis  \pa_t u+\na\cdot f(u)=\De P_su,\ \  x\in \R^n, t>0,\\[3mm]
\dis u|_{t=0}=u_0,\ \  x\in \R^n.
\end{array}\right.
\end{equation}
Here, $u=u(x,t): \R^n\times (0,\infty)\to \R$ is unknown and
$u_0=u_0(x): \R^n\to \R$ is given. $n\geq 1$ denotes the spatial
dimension. $f(u)=(f_1(u),f_2(u),\cdots,f_n(u)): \R\to \R^n$ is a
given smooth flux function. Since only the small amplitude classical solutions will be discussed,  we suppose without loss of
generality that
\begin{equation}\label{s4.assu.f}
    f_j(0)=f_j'(0)=0, \ \ 1\leq j\leq n.
\end{equation}
Otherwise, one can take change of variables
\begin{equation*}
    \tilde{t}=t,\ \ \tilde{x}_j=x_j+f_j'(0)t,
\end{equation*}
and denote $\tilde{f}(\cdot)$ by
\begin{equation*}
    \tilde{f}(u)=f(u)-f(0)-f'(0)u,
\end{equation*}
so that the form of \eqref{CP.eq} remains unchanged but
\eqref{s4.assu.f} still holds for $\tilde{f}$. In the diffusion-type
source term $\De P_s u$, $P_s$ with an index $s\in \R$ is a
pseudo-differential operator defined by
\begin{equation*}
P_s u=\CF^{-1}\{[m(\xi)]^{-s}\CF u\},
\end{equation*}
where through this paper the frequency function $m(\xi)$ is supposed to be strictly positive and continuous in
$\xi\in \R^n$. Moreover, the following assumptions on $m(\xi)$ which
describe its more precise behavior as $|\xi|$ is close to infinity or zero will also be used:
\begin{itemize}
\item[$(\mathcal{M}_1)$] Whenever $|\xi|\gg 1$ is close to infinity, $m(\xi)\sim |\xi|^2$. Notice that since $m(\xi)$
is strictly positive and continuous in $\xi\in \R^n$, it is equivalent with the assumption that
$m(\xi)\sim 1+|\xi|^2$ for $\xi\in \R^n$, that is, there are constants $m_1\geq m_0>0$ such that
for any $\xi\in \R^n$,
\begin{equation}\label{m.form}
    m_0(1+|\xi|^2)\leq m(\xi)\leq m_1(1+|\xi|^2).
\end{equation}
\item[$(\mathcal{M}_2)$] Whenever $|\xi|\ll 1$ is close to zero,
$m(\xi)-m(0)$ vanishes with rate $|\xi|^{\si}$ for a constant
$\si>0$. Precisely, there is $m_2>0$ such that
\begin{equation*}
    |m(\xi)-m(0)|\leq m_2|\xi|^\si
\end{equation*}
whenever $|\xi|$ is small enough.
\end{itemize}
\noindent As will be seen later on, several mathematical models in
different contexts can provide some concrete examples related to
equation \eqref{CP.eq}$_1$ for different forms of $m(\xi)$
satisfying assumptions $(\CM_1)$ and $(\CM_2)$.

The aim here is to study the well-posedness and large-time behavior
on the Cauchy problem \eqref{CP.eq} for arbitrary space dimensions
$n\geq 1$ and for all $s\in \R$ in the framework of small-amplitude
classical solutions. Note that under the assumption $(\CM_1)$,
\begin{equation}\label{De.sim}
    \De P_s u=-\CF^{-1}\left\{\frac{|\xi|^2}{m(\xi)^s}\CF u\right\}\ \
    \text{with}\ \
    \frac{|\xi|^2}{m(\xi)^s}\sim \frac{|\xi|^2}{(1+|\xi|^2)^s},
\end{equation}
and also the regularity of solutions is generally determined by their high frequency part.
Thus, it is convenient to say that equation \eqref{CP.eq}$_1$ is of
the regularity-gain type for {$s< 1$} whereas of the
regularity-loss type for $s>1$. In the critical case when $s=1$, $\De P_s u$ just behaves like a damping term over the high frequency domain;
more discussions will be given in the next subsection.  In this paper, for all $s\in \R$,  we shall establish under $(\CM_1)$
the global existence of solutions and the time-decay rate of
solutions and their derivatives up to some order,
where the extra regularity on initial data is required in the case
of the regularity-loss type. Moreover, we shall also prove that the
obtained solution for all $s\in \R$ time-asymptotically tends to
some heat diffusion wave if $(\CM_2)$ is further assumed. Precisely, these results are given as follows.

First of all, we are  concerned with the global existence and
time-decay rate of solutions to the Cauchy problem \eqref{CP.eq}
under the assumption $(\CM_1)$. When $s\leq 1$,  one has

\begin{theorem}[case of $s\leq 1$]\label{thm.s.small}
Let $n\geq 1$, $s\leq 1$, and let $N\geq [n/2]+2$. Suppose the
assumption $(\CM_1)$ holds. There is a constant $\eps_0>0$ such that if
$\|u_0\|_{H^N}\leq \eps_0$, then the Cauchy problem \eqref{CP.eq}
admits a unique global solution $u(x,t)$ satisfying
\begin{equation*}
 u\in C^0\left([0,\infty); H^N\right)\cap C^1\left([0,\infty);
H^{N-1}\right)
\end{equation*}
and
\begin{equation*}
\left\|u(t)\right\|_{H^N}^2+\int_0^t\left\|\nabla \lag \na
\rag^{N-s}u(\tau)\right\|^2d\tau \leq C\left\|u_0\right\|_{H^N}^2
\end{equation*}
for any $t\geq 0$. Furthermore, whenever
$\|u_0\|_{H^N\cap L^1}$ is small enough, the obtained solution $u$
enjoys the time-decay estimate
\begin{equation}\label{thm.s.small.decay}
\left\|\nabla^ku(t)\right\|\leq C\|u_0\|_{H^N\cap
L^1}(1+t)^{-\frac{n}{4}-\frac{k}{2}}
\end{equation}
for any $t\geq 0$, where $0\leq k\leq N$.
\end{theorem}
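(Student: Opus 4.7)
The plan is to combine a small-data a priori energy estimate in $H^N$ with a standard local-in-time existence argument (contraction-mapping on a linearization of \eqref{CP.eq}), and then read off the optimal time-decay rate from Duhamel's formula together with the linear $L^p$--$L^q$ estimates on the semigroup $e^{t\De P_s}$ that the paper announces before this theorem. Since $s\leq 1$ is the regularity-gain/damping-critical range, no time-weighted modification of the energy is needed at this stage; the standard framework should close.

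First I would carry out the basic energy estimate. Applying $\nabla^k$ to \eqref{CP.eq}$_1$ for $0\leq k\leq N$, pairing with $\nabla^k u$ in $L^2$, and using Plancherel, the diffusion-type term contributes
\begin{equation*}
-\int_{\R^n}\nabla^k\De P_s u\cdot\nabla^k u\,dx=\int_{\R^n}\frac{|\xi|^{2k+2}}{m(\xi)^s}|\hat u(\xi)|^2\,d\xi.
\end{equation*}
Summing over $0\leq k\leq N$ and invoking $(\CM_1)$, the total dissipation is comparable to $\|\nabla\lag\na\rag^{N-s}u\|^2$. For the convective term I would integrate by parts to symmetrize, use Moser-type product inequalities to distribute the derivatives on $f(u)$, and invoke the Sobolev embedding $H^N\hookrightarrow W^{1,\infty}$ (which requires $N\geq [n/2]+2$); the normalization $f_j(0)=f_j'(0)=0$ pulls out a factor $\|u\|_{L^\infty}+\|u\|_{H^N}$ that is small by hypothesis. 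The result is a differential inequality of the form
\begin{equation*}
\frac{d}{dt}\|u\|_{H^N}^2+c\,\|\nabla\lag\na\rag^{N-s}u\|^2\leq C\|u\|_{H^N}\,\|\nabla\lag\na\rag^{N-s}u\|^2,
\end{equation*}
in which smallness of $\|u_0\|_{H^N}$ absorbs the right-hand side and produces the stated $H^N$-bound and dissipation integral. A continuation argument then upgrades the local solution to a global one.

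For the decay \eqref{thm.s.small.decay}, I would work from the mild formulation
\begin{equation*}
u(t)=e^{t\De P_s}u_0+\int_0^t e^{(t-\tau)\De P_s}\na\cdot f(u)(\tau)\,d\tau
\end{equation*}
and apply the linear $L^p$--$L^q$ time-decay estimates for $e^{t\De P_s}$. Since $(\CM_1)$ forces $|\xi|^2/m(\xi)^s\sim|\xi|^2$ as $|\xi|\to 0$, the low-frequency semigroup decays exactly at the heat rate, giving $\|\nabla^k e^{t\De P_s}g\|\lesssim (1+t)^{-\frac{n}{4}-\frac{k}{2}}\|g\|_{L^1}+e^{-ct}\|\nabla^kg\|$ for $k\leq N$. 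From $|f(u)|\lesssim u^2$ near zero I get $\|f(u)\|_{L^1}\lesssim \|u\|^2$ and $\|\nabla^kf(u)\|\lesssim \|u\|_{L^\infty}\|\nabla^ku\|$. Introducing the time-weighted quantity
\begin{equation*}
M(t)=\sup_{0\leq \tau\leq t}\sum_{k=0}^{N}(1+\tau)^{\frac{n}{4}+\frac{k}{2}}\|\nabla^ku(\tau)\|,
\end{equation*}
I would split the Duhamel integral at $t/2$: on $[0,t/2]$ the semigroup factor supplies the full decay while the nonlinearity is estimated in $L^1$, and on $[t/2,t]$ the semigroup decay is weaker but the nonlinearity is estimated in $L^2$ via the energy bound from the first step. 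This yields $M(t)\leq C\|u_0\|_{H^N\cap L^1}+CM(t)^2$, and smallness of $\|u_0\|_{H^N\cap L^1}$ closes the bootstrap.

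The main difficulty I anticipate is the top-order energy estimate when $s$ is close to $1$, for then $\|\nabla\lag\na\rag^{N-s}u\|^2$ controls strictly fewer derivatives than the heat dissipation would, so the Moser commutator $[\nabla^N,f'(u)]\na u$ must be interpolated carefully between the $H^N$ energy and the available weaker dissipation. Once this is handled, the Duhamel-plus-bootstrap step is routine and produces the heat-type rates in \eqref{thm.s.small.decay}.
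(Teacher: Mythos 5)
Your energy-estimate part is essentially the paper's: zero- and higher-order estimates using Plancherel for the dissipation, product inequalities for the flux, Sobolev embedding (hence $N\geq [n/2]+2$), and absorption of the nonlinear term into $\|\nabla\lag\na\rag^{N-s}u\|^2$ because $s\leq 1$. That is fine.

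The Duhamel/bootstrap step, however, has a genuine gap at the top order, and the difficulty you flag (the commutator in the energy estimate) is not actually where the problem lies. When you apply Lemma~\ref{lem.decay}(i) with $p=1$, $r=q=2$ inside the Duhamel integral, the high-frequency contribution is $e^{-\lambda(t-\tau)}\|\nabla^{k+1}f(u)(\tau)\|_{L^2}$: for $s\leq 1$ there is no frequency gain on $|\xi|\geq R$, so the semigroup buys you nothing beyond a bounded exponential weight, and you must pay one extra derivative for the $\nabla\cdot$ in the nonlinearity. At $k=N$ this forces you to control $\nabla^{N+1}u$ in $L^2$, which the global $H^N$ solution does not supply, and your bootstrap functional $M(t)$ only tracks $k\leq N$. (The dissipation integral controls $\nabla^{N+1}\lag\na\rag^{-s}u$ in $L^2_tL^2_x$, but that gives no pointwise-in-time decay and degenerates as $s\to 1$.) This is exactly why the paper does \emph{not} use Duhamel here: it multiplies the energy inequality by a time weight $(1+t)^{a+k}$, decomposes $\|\nabla^k u\|^2\leq\|\nabla^{k+1}\lag\na\rag^{-s}u\|^2+\|\nabla^k\lag\na\rag^{-s}u\|^2$ (valid since $s\leq 1$), and closes the weaker term via the interpolation estimate of Lemma~\ref{lem.inter} together with the $L^1$-conservation $\|u(t)\|_{L^1}\leq\|u_0\|_{L^1}$. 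That interpolation lemma is precisely the device that yields the rate for the \emph{highest} derivative $\nabla^N u$ without extra regularity on $u_0$ — the paper even remarks on this explicitly. In fact, the Duhamel-plus-linear-decay argument you sketch is what the paper reserves for the regularity-loss case $s>1$ (Lemma~\ref{lem.l.pri}), and there it yields decay only for $k\leq N_1<N$, confirming that the Duhamel route cannot reach $k=N$. To repair your proof you would need to replace the bootstrap step with the time-weighted energy method plus an interpolation lemma of the above type.
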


\begin{remark}
As far as the time-decay estimate \eqref{thm.s.small.decay} is
concerned, {the optimal $L^2$-decay estimate is obtained for all derivatives  up
to $N$ order.} The smallness assumption on
$\|u_0\|_{L^1}$ is used only for the case of derivatives $\na^k u$
with $1\leq k\leq N$, but it is not necessary when $k=0$; see Remark
\ref{rem.small.L1} for details. For $s=1$, the corresponding result
in Theorem \ref{thm.s.small} has been obtained in \cite{DFZ}, where
the time-decay of the $L^2$-norm for $u(x,t)$ only was considered.
\end{remark}

Although the existence result stated as in Theorem \ref{thm.s.small}
is more or less standard for the case when $s\leq 0$ for which
equation \eqref{CP.eq}$_1$ gains at least one-order derivative
regularity, we would use a unified elementary energy method as in
\cite{DFZ} to prove the whole case of $s\leq 1$, which in turn can
shed light on some main difficulties for the regularity-loss case of
$s>1$. On the other hand, it is non trivial to obtain the optimal
time-decay rate of the highest-order derivative $\na_x^Nu(t)$ in
$L^2$-norm without putting additional regularity on the initial data
$u_0$. From the proof later on, this is actually based on some
interpolation estimate on the $\na^{N}\lag \na \rag^{-s}$-type
derivative of $u(x,t)$; see Lemma \ref{lem.inter}.

Next, when $s>1$, the result similar to that in Theorem \ref{thm.s.small} is stated as follows.

\begin{theorem}[case of $s> 1$]\label{thm.s.large}
Let $n\geq 1$, $s> 1$, and let $N\geq N_0$, where $N_0$ is defined
by \eqref{def.N0} in terms of $n$ and $s$. Suppose the assumption
$(\CM_1)$ holds. There is a constant $\eps_1>0$ such that if
$\|u_0\|_{H^N}+\|u_0\|_{L^1}\leq \eps_1$, then the Cauchy problem
\eqref{CP.eq} admits a unique global solution $u(x,t)$ satisfying
\begin{equation}\label{thm.s.large.1}
 u\in C^0\left([0,\infty); H^N\right)\cap C^1\left([0,\infty);
H^{N-1}\right).
\end{equation}
Furthermore, the obtained solution also enjoys the time-decay estimate
\begin{equation}\label{thm.s.large.2}
\left\|\nabla^ku(t)\right\|\leq
C\|u_0\|_{H^N\cap L^1}(1+t)^{-\frac{n}{4}-\frac{k}{2}}
\end{equation}
for any $t\geq 0$, where $0\leq k\leq N_1$ with $N_1$ defined by \eqref{def.N1} in terms of $n$, $s$ and $N$.
\end{theorem}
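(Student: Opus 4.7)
The plan is to adapt the local-existence-plus-a-priori-estimate scheme of Theorem \ref{thm.s.small}, but now couple it with a time-weighted decay argument that extracts decay from the linear semigroup via Duhamel. The new obstacle is that for $s>1$ the symbol $|\xi|^2/m(\xi)^s\sim|\xi|^{2-2s}$ tends to $0$ as $|\xi|\to\infty$, so the pure energy dissipation $\|\na\lag\na\rag^{-s}u\|$ is too weak at high frequencies to control the top-order derivatives. Consequently, the decay rate of the lower-order derivatives $\na^ku$ with $k\leq N_1$ must be obtained from the regularity-loss $L^p$--$L^q$ estimate on the semigroup $e^{t\De P_s}$ already proved in the linear part of the paper, at the price of losing a fixed number of derivatives on the initial data; this loss dictates the threshold $N_0$ and the range $N_1<N$.

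\emph{Step 1: local existence and basic energy identity.} For $N\geq N_0\geq [n/2]+2$, local existence in $C([0,T];H^N)\cap C^1([0,T];H^{N-1})$ follows from a standard contraction argument. Applying $\na^k$ to \eqref{CP.eq}$_1$, pairing with $\na^k u$, and summing over $0\leq k\leq N$, one finds
\begin{equation*}
\frac{1}{2}\frac{d}{dt}\|u\|_{H^N}^2+c\|\na\lag\na\rag^{-s}u\|_{H^N}^2\leq C\|u\|_{L^\infty}\|u\|_{H^N}^2,
\end{equation*}
where the left-hand dissipation uses \eqref{De.sim} with $(\CM_1)$, and the right-hand side is treated by $f(u)=O(u^2)$ coming from \eqref{s4.assu.f}, together with Moser-type commutator estimates. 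In contrast to the case $s\leq 1$, this dissipation is unable to absorb the nonlinear term, so integrability of $\|u(\tau)\|_{L^\infty}$ in $\tau$ must be injected from outside.

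\emph{Step 2: Duhamel and time-weighted decay norm.} Writing
\begin{equation*}
u(t)=e^{t\De P_s}u_0-\int_0^t e^{(t-\tau)\De P_s}\na\cdot f(u(\tau))\,d\tau,
\end{equation*}
and applying the linear $L^p$--$L^q$ estimate (which in the regularity-loss regime gives $\|\na^ke^{t\De P_s}v\|\lesssim(1+t)^{-\frac{n}{4}-\frac{k}{2}}(\|v\|_{L^1}+\|\na^{k+\ell(s,k)}v\|)$ for an appropriate loss $\ell(s,k)$), introduce the time-weighted quantity
\begin{equation*}
X(t):=\sup_{0\leq\tau\leq t}\sum_{k=0}^{N_1}(1+\tau)^{\frac{n}{4}+\frac{k}{2}}\|\na^ku(\tau)\|.
\end{equation*}
Gagliardo--Nirenberg interpolation then yields $\|u(\tau)\|_{L^\infty}\lesssim(1+\tau)^{-n/2}X(\tau)$ once $N_1$ exceeds a multiple of $n$. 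Estimating $f(u)$ in $L^1$ and in $H^{k-1}$ by $\|u\|_{L^2}^2$ and $\|u\|_{L^\infty}\|\na^{k-1}u\|$ respectively, splitting $\int_0^t=\int_0^{t/2}+\int_{t/2}^t$ and using the linear decay on each piece, one obtains
\begin{equation*}
X(t)\leq C\|u_0\|_{H^N\cap L^1}+CX(t)\bigl(X(t)+\|u(t)\|_{H^N}\bigr).
\end{equation*}
Combined with the energy inequality of Step~1 and the fact that $\int_0^\infty(1+\tau)^{-n/2}X(\tau)\,d\tau<\infty$, a standard continuity argument under $\|u_0\|_{H^N}+\|u_0\|_{L^1}\leq\eps_1$ simultaneously keeps $\|u(t)\|_{H^N}$ uniformly small, globalizes the local solution to give \eqref{thm.s.large.1}, and yields \eqref{thm.s.large.2} for $0\leq k\leq N_1$.

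\emph{Main difficulty.} The hard part is reconciling three competing derivative counts: (i) the loss $\ell(s,k)$ in the linear estimate forces $N$ to exceed $N_1$ by a buffer depending on $s$, which is exactly what the definition \eqref{def.N0} encodes; (ii) $N_1$ must be large enough in terms of $n$ for the $X$-based $L^\infty$ bound to be time-integrable, giving \eqref{def.N1}; (iii) the top-order energy must be used only as a uniform bound and not upgraded to a decay estimate, since the dissipation degenerates at high frequencies. Once these three constraints are reconciled, the bootstrap closes routinely.
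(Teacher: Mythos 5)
The proposal gets the high-level architecture right (local existence, Duhamel with the regularity-loss linear estimate, and the role of the buffer $N-N_1$ dictated by the derivative loss), but it is missing the central technical device of the paper's proof, and as a result neither the global $H^N$ bound nor the Duhamel bootstrap actually closes.

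First, Step~1 does not close for $n=1,2$. Your energy inequality
$\frac{1}{2}\frac{d}{dt}\|u\|_{H^N}^2+c\|\na\lag\na\rag^{-s}u\|_{H^N}^2\leq C\|u\|_{L^\infty}\|u\|_{H^N}^2$
(the paper's Lemma~\ref{lem.pri.k} actually has $\|\na u\|_{L^\infty}$ on the right, but this does not change the difficulty) cannot absorb the right-hand side because for $s>1$ the dissipation $\|\na\lag\na\rag^{-s}u\|_{H^N}^2$ is genuinely weaker than $\|u\|_{H^N}^2$ at high frequencies, and you then try to appeal to $\int_0^\infty(1+\tau)^{-n/2}X(\tau)\,d\tau<\infty$. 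That integral diverges for $n=1$ and $n=2$; even with $\|\na u\|_{L^\infty}\lesssim(1+\tau)^{-(n+1)/2}$ the integral still diverges for $n=1$. The theorem claims $n\geq 1$, so this is a hole, not an optimization. The paper resolves it by the Hosono--Kawashima time-weighted energy trick: multiply the $k$-th level energy identity by the \emph{decaying} weight $(1+t)^{k-\frac12}$ before integrating. This creates a fictitious weighted dissipation $\int_0^t(1+\tau)^{k-\frac32}\|\na^k\lag\na\rag^{N-k[s]_+}u\|^2d\tau$ (which is exactly the functional $\CD_N(t)$) against which the nonlinear term is absorbed, yielding $\CE_N(t)+\CD_N(t)\leq C\|u_0\|_{H^N}^2+CM_1(t)\CD_N(t)$ for \emph{all} $n\geq 1$, as in Lemma~\ref{lem.l.pri}. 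This is a genuinely different mechanism from "inject integrability of $\|u\|_{L^\infty}$ from outside", and it is what makes the proof work uniformly in $n$.

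Second, your decay functional $X(t)$ lacks the ladder structure of $\CE_N(t)$, and this breaks the Duhamel bootstrap. The paper's $\CE_N(t)$ controls $(1+\tau)^{k-\frac12}\|\na^k u(\tau)\|_{H^{N-k[s]_+}}^2$ for each $k$ up to $[N/[s]_+]$: each derivative level carries a stronger time weight but a smaller regularity buffer $N-k[s]_+$, precisely matching the one-order-loss per step inherent in $\De P_s$ with $s>1$. This gives partial decay of the \emph{intermediate} derivatives $\|\na^\ell u\|$ for $N_1<\ell\leq N$, and this partial decay is indispensable in the near-time half $\int_{t/2}^t$ of the Duhamel integral: the regularity-loss estimate applied there forces you to control $\|\na^{1+k+[2(1+\de)(s-1)]_+}g(u)(\tau)\|$, a derivative of order up to $\ell_3(s,k)\leq N$, which exceeds $N_1$. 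Your $X(t)$ only covers derivatives up to $N_1$ and gives no decay beyond that, only uniform $H^N$-boundedness, which produces an integrand $(1+t-\tau)^{-(1+\de)}\cdot O(1)$ and hence no usable decay in $t$. The paper closes this by pulling the factor $\sqrt{\CE_N(t)}(1+\tau)^{-\frac{k}{2}+\frac14}$ from the ladder, giving the inequality $\CE_{N_1}^{\rm op}(t)\leq C\|u_0\|_{H^N\cap L^1}^2+C\CE_{N_1}^{\rm op}(t)^2+CM_0(t)^2\CE_N(t)$. The two inequalities are then coupled and closed simultaneously. In short: your proposal identifies the right linear decay estimate and the right role of $N_0,N_1$, but without introducing the hierarchical time-weighted functionals $\CE_N(t)$, $\CD_N(t)$ (and closing them together with $\CE_{N_1}^{\rm op}(t)$, $M_0$, $M_1$), the argument does not close in the stated generality.
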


For this time, let us point out some obvious differences of the
above two theorems corresponding to the case when $s\leq 1$ and
$s>1$. First, for the global existence of solutions, the smallness
condition on $\|u_0\|_{L^1}$ is needed in the case when $s>1$ but
not required when $s\leq 1$. Second, from the definition \eqref{def.N0}, $N_0$
standing for the smoothness degree of initial data $u_0$ is strictly
larger than $\left[\frac{n}{2}\right]+2$, which shows that the
additional regularity of initial data is necessary for the case of
$s>1$ so as to guarantee the global existence of solutions to the
Cauchy problem \eqref{CP.eq}. Last, from the definition
\eqref{def.N1} of $N_1$,
\begin{equation*}
  \left[\frac{n}{2}\right]+2\leq N_1 <N.
\end{equation*}
This together with \eqref{thm.s.large.2} imply that under the same
condition on $H^N$ initial data, the optimal time-decay rate in the
case of $s>1$ is obtained for $u$ and its lower-order derivatives
only, but not for all derivatives up to $N$-order. Here, the optimal
rate means that it is equal to the one in the case of the linearized
equation; see Lemma \ref{lem.decay}. We remark that all these
differences between Theorem \ref{thm.s.small} and Theorem \ref{thm.s.large} result from the special feature of the equation for
different values of $s$, that is, equation \eqref{CP.eq}$_1$ is of
the regularity-gain type for $s<1$ but of the regularity-loss type
for $s>1$.

Finally, we consider the asymptotic behavior of the obtained-above solutions for all
$s\in \R$. It turns out that for either $s\leq 1$ or
$s>1$,  a properly-defined heat diffusion wave can be regarded as a
good time-asymptotic profile. For that, set $\mu_s=1/m(0)^s$. Note
$\mu_s>0$. Define the Green function $G=G(x,t)$ by
\begin{equation*}
G(x,t)=(4\mu_s\pi
t)^{-\frac{n}{2}}e^{-\frac{|x|^2}{4\mu_st}}=\CF^{-1}\{e^{-\mu_s|\xi|^2}\}.
\end{equation*}
The result is stated as follows.

\begin{theorem}[time-asymptotic behavior]\label{thm.asymptotic}
Let $n\geq 2$, $k\geq 0$  be integers and $s\in \R$. Suppose that
both assumptions $(\CM_1)$ and $(\CM_2)$ on $m(\xi)$ hold.  Let
$u=u(x,t)$ be the solution to the Cauchy problem \eqref{CP.eq}
constructed in Theorems \ref{thm.s.small} or \ref{thm.s.large} for
given initial data $u_0\in H^N\left({\mathbb{R}^n}\right)\cap
L^1\left({\mathbb{R}^n}\right)$, where $N$ is chosen such that if
$s\leq 1$ then $N\geq \left[\frac{n}{2}\right]+2$ and if $s>1$ then
$N\geq N_0$. Set $u^\ast=u^\ast(x,t)$ by
\begin{equation}\label{def.heatwave}
 u^\ast(x,t)=G(x,t+1)\int_{\mathbb{R}^n}u_0(x)dx.
\end{equation}
Then, whenever $\|u_0\|_{H^N\cap L^1}$ is further  small enough and
$u_0\in L^1_1$ with $\int_{\R^n}u_0\,dx=0$,
\begin{equation}\label{thm,asy.large.dec}
\left\|\nabla^k\left(u-u^*\right)(t)\right\|\leq C\|u_0\|_{H^N\cap
L_1^1}\rho(t)(1+t)^{-\frac{n}{4}-\frac{k}{2}-\frac{1}{2}\min\{1,\si\}}
\end{equation}
for any $t\geq 0$, where $\rho(t)=\ln(1+t)$ for $n=2$ and
$\rho(t)=1$ for $n\geq 3$, and $k$ is chosen in terms of $n$, $s$
and $N$ such that if $s\leq 1$ then $0\leq k\leq N$ and if $s>1$
then $0\leq k\leq N_2$ with $N_2$ defined in \eqref{def.N2}.
\end{theorem}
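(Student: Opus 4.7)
The plan is to decompose
$u(t)-u^*(t)=(u(t)-u_L(t))+(u_L(t)-u^*(t))$,
where $u_L(t):=e^{t\Delta P_s}u_0$ denotes the linear-semigroup
evolution of the initial datum, and then to estimate the two pieces
separately by Fourier analysis and by Duhamel's formula.

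For the linear deviation $u_L-u^*$ I would work directly on the
Fourier side and write
\begin{equation*}
\widehat{u_L-u^*}(\xi,t)
=\bigl[e^{-t|\xi|^2/m(\xi)^s}-e^{-\mu_s(t+1)|\xi|^2}\bigr]\widehat{u_0}(\xi)
+e^{-\mu_s(t+1)|\xi|^2}\bigl[\widehat{u_0}(\xi)-\widehat{u_0}(0)\bigr],
\end{equation*}
using that $\widehat{u^*}(\xi,t)=e^{-\mu_s(t+1)|\xi|^2}\widehat{u_0}(0)$.
On a low-frequency ball $|\xi|\leq r_0$, assumption $(\CM_2)$
delivers $|m(\xi)^{-s}-\mu_s|\lesssim|\xi|^\sigma$, so the mean value
theorem bounds the first bracket by
$Ct|\xi|^{2+\sigma}e^{-ct|\xi|^2}+C|\xi|^2 e^{-ct|\xi|^2}$, while the
hypotheses $u_0\in L^1_1$ and $\int u_0\,dx=0$ together give
$|\widehat{u_0}(\xi)-\widehat{u_0}(0)|\lesssim|\xi|\,\|u_0\|_{L^1_1}$.
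Multiplying by $|\xi|^{2k}$ and integrating the Gaussian tails
produces exactly the claimed rate
$(1+t)^{-n/4-k/2-\min\{1,\sigma\}/2}$, with the borderline
logarithm $\rho(t)=\ln(1+t)$ appearing in dimension $n=2$. On the
high-frequency region the $u^*$ contribution is Schwartz-rapid in
$t$, and $u_L$ is absorbed using $(\CM_1)$ together with the $H^N$
energy bound already supplied by Theorem \ref{thm.s.small} or
Theorem \ref{thm.s.large}.

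For the nonlinear deviation $u-u_L$, Duhamel gives
\begin{equation*}
u(t)-u_L(t)=-\int_0^t e^{(t-\tau)\Delta P_s}\nabla\cdot f(u(\tau))\,d\tau.
\end{equation*}
I would invoke the $L^p$-$L^q$ linear decay estimates recorded
earlier in the paper, exploiting both the gradient structure of
$\nabla\cdot f(u)$, which gives an extra factor $(t-\tau)^{-1/2}$ in
the low-frequency kernel, and the quadratic vanishing
$|f(u)|\lesssim|u|^2$ forced by \eqref{s4.assu.f}, which together with
Theorems \ref{thm.s.small}/\ref{thm.s.large} yields
$\|f(u(\tau))\|_{L^1}\lesssim\|u(\tau)\|^2\lesssim(1+\tau)^{-n/2}$.
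Higher derivatives of $f(u)$ are then handled by
Gagliardo--Nirenberg interpolation against the already-known
time-decay of $\|\nabla^j u(\tau)\|$. Splitting the time integral
into $[0,t/2]$ and $[t/2,t]$ as usual makes the resulting
convolution close at the same target rate, again producing the
logarithm in $n=2$.

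The hardest step will be to close the argument self-consistently at
the rate $(1+t)^{-n/4-k/2-\min\{1,\sigma\}/2}$: because the
nonlinear iteration tends to ``forget'' the origin of the
$\min\{1,\sigma\}/2$ gain, I expect to introduce a time-weighted
norm
$\sup_{0\leq\tau\leq t}(1+\tau)^{n/4+k/2+\min\{1,\sigma\}/2}\rho(\tau)^{-1}
\|\nabla^k(u-u^*)(\tau)\|$
and close it inductively in $k$. A secondary difficulty is the
regularity-loss range $s>1$, where the high-frequency semigroup
decay carries an inherent polynomial loss of derivatives; this
explains why $k$ must be truncated at $N_2$ (defined in
\eqref{def.N2}), leaving just enough regularity on $u$ for the
semigroup estimates to run. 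Coordinating this truncation with the
Gagliardo--Nirenberg interpolation of the nonlinearity, together
with the sharp constant in the logarithmic case $n=2$, will be the
bulk of the technical work.
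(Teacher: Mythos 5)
Your proposal tracks the paper's argument closely and would work. The paper uses a three-term split $u-u^*=(u-e^{\De P_s t}u_0)+(e^{\De P_s t}u_0-e^{\mu_s\De t}u_0)+(e^{\mu_s\De t}u_0-u^*)$, proving decay of each piece separately (Lemmas~\ref{asy.small.lem}--\ref{asy.large.lem} via Duhamel for the first, Lemma~\ref{lem.decay.d} via $(\CM_2)$ and the mean value theorem for the second, Lemma~\ref{asy.profile.lem} via heat-kernel moments for the third); your two-term split merely fuses the last two linear pieces into a single Fourier-side estimate, which is equivalent and, if anything, slightly more direct.

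Two small corrections to your reasoning. First, the logarithm $\rho(t)$ in $n=2$ does \emph{not} come from integrating Gaussian tails in the linear Fourier analysis: the low-frequency integrals $\int_{|\xi|\le r_0}|\xi|^{2k}\bigl(|\xi|^{2\sigma}+|\xi|^2\bigr)e^{-c t|\xi|^2}\,d\xi$ produce pure powers of $t$. The log arises only from the Duhamel time-convolution $\int_0^{t/2}(1+t-\tau)^{-\frac{n}{4}-\frac{k+1}{2}}(1+\tau)^{-\frac{n}{2}}d\tau$ (and its twin over $[t/2,t]$), which is borderline logarithmic precisely when $n=2$; your ``again producing the logarithm'' is the part that is actually responsible. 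Second, the self-consistent time-weighted bootstrap on $\|\nabla^k(u-u^*)\|$ that you anticipate as the ``hardest step'' is not needed. The Duhamel integral for $u-e^{\De P_s t}u_0$ depends only on $f(u(\tau))$, and the decay of $u(\tau)$ and its derivatives is already supplied by Theorems~\ref{thm.s.small}/\ref{thm.s.large}; feeding those rates in directly yields $\rho(t)(1+t)^{-\frac{n}{4}-\frac{k+1}{2}}$, which strictly beats the target rate. The $\min\{1,\sigma\}/2$ bottleneck comes entirely from the linear pieces, so there is nothing for the nonlinear iteration to ``forget.'' What genuinely requires care, as you do flag, is the $s>1$ regularity-loss bookkeeping: the choice of $\ell$ in \eqref{lem.decay.3} must be balanced against the derivative overhead $[\ell+n(\frac1r-\frac1q)]_*$, and the definitions of $\ell_4,\ell_5,N_2$ in \eqref{def.ll4}--\eqref{def.N2} are exactly tuned so that the highest derivative of $g(u)$ appearing in the Duhamel integrand stays within $H^N$ and within the reach of $\CE_N(t)$.
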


\begin{remark}
From \eqref{def.heatwave}, $u^\ast(x,t)$ carries  the same mass of initial data and it is usually called the heat
diffusion wave.  $u^\ast(x,t)$ is a good time-asymptotic profile of solutions to
the Cauchy problem \eqref{CP.eq} in the sense that the rate of the
nonlinear solution $u(x,t)$ converging to the profile $u^\ast(x,t)$
is strictly larger than one of the nonlinear solution $u(x,t)$
itself decaying to zero due to $\si>0$ and hence $\min\{1,\si\}>0$
by the assumption $(\CM_2)$.
\end{remark}

\subsection{Applications and related literature}
In this subsection we review some known results related to the
general model
\begin{equation}\label{eq.app}
\pa_t u+\na \cdot f(u)=\De P_s u
\end{equation}
proposed in this paper. As will be discussed in the following, this
general model can be connected to some concrete equation or system
by taking different values of $s$, which in fact have been
extensively studied even for a lot of different issues in individual
contents. We remark that the exhaustive literature list is beyond
the scope of the paper, and thus only some closely related results
will be mentioned; interested readers can refer to them and
references therein.

When $s$ takes the negative value, similar to \eqref{De.sim},
\begin{equation*}
\De P_s\sim \De-(-\De)^{1-s}
\end{equation*}
in the sense of the frequency space.  Particularly when $s=-1$, it
is connected with the Cahn-Hilliard (CH) equation describing the
phase separation in binary alloys, in the form of
\begin{equation}\label{CH}
\dis  \pa_tu+\De \varphi(u)+\De^2u=0,
\end{equation}
where $\varphi(\cdot)$ is a given function. Notice that if $\varphi$
is smooth and $\varphi'(0)<0$, the linearized equation of \eqref{CH}
is written as
\begin{equation*}
\pa_t u=-\varphi'(0)\De u-\De^2 u
\end{equation*}
which exactly corresponds to the case of $s=-1$ in the linearized
level. For the related study of {the CH equation, we mention
\cite{CaMu} and \cite{Liu-W-Zhao} only.}

{For the Cauchy problem in general $n$ dimensions, \cite{CaMu}
studied the following problem:
\begin{equation}\label{CaMu}
\left\{\begin{array}{l}
 \dis  \partial_tu=\De u+\sum\limits_{ij=1}^n
 \frac{\partial^2f_{ij}(u)}{\partial_{x_i}\partial_{x_j}u}-\varepsilon^2\De^2u,\ \  x\in \R^n, t>0,\\[5mm]
\dis u|_{t=0}=u_0,\ \  x\in \R^n.
\end{array}\right.
\end{equation}
Under the assumptions that $f_{ij}(u)\ (i,j=1,2,\cdots,n)$ are
Lipschitz continuous and equal to a constant outside a bounded
interval in $u$, $u_0(x)\in L^\infty(\mathbb{R}^n)$,
$\varepsilon \nabla u_0(x)\in L^\infty(\mathbb{R}^n)$ and under some additional conditions, an $L^\infty(\mathbb{R}^n)$-a priori estimate,
which is independent of $\varepsilon>0$, is obtained for the
solution of the Cauchy problem \eqref{CaMu}. On the other hand, few results on the
temporal decay estimates and on the asymptotics of the solution of
the corresponding Cauchy problem have been obtained. Thus two natural
questions are under which conditions on the smooth nonlinear functions
$f_{ij} (u) \ (i, j = 1, 2,\cdots, n)$ the corresponding Cauchy
problem admits a unique global smooth solution $u(x,t)$, and how to
get the optimal temporal decay estimates and further describe its
time-asymptotic profile.}

For the above questions, an answer was given in  \cite{Liu-W-Zhao}  for a special case of the CH equation. Precisely,  for {\eqref{CH}},  when $\varphi'(0)=0$, \cite{Liu-W-Zhao} proved
the global existence of the $C^\infty$ solution $u$ for initial data
$u_0\in L^1\cap L^\infty$ with $\|u_0\|_{L^1}$ small enough, and
further obtained the optimal time-decay estimates
\begin{equation*}
\left\|\na^k u(t)\right\|_{L^p\left(\mathbb{R}^n\right)}\leq
C(\tau)(1+t)^{-\frac{k}{4}-\frac{n}{4}\left(1-\frac{1}{p}\right)},\
\ t\geq\tau>0,\ \ k=0,1,\cdots,
\end{equation*}
where $1\leq p\leq \infty$. Moreover, for the asymptotic profile,
under some additional condition on $\varphi$, it was also proved in
\cite{Liu-W-Zhao} that
\begin{equation*}
\lim\limits_{t\rightarrow\infty}(1+t)^{\frac{n}{4}\left(1-\frac{1}{p}\right)}
\left\|u(t)-\CF^{-1}(e^{-|\xi|^4t})\int_{\R^n}u_0\,
dx\right\|_{L^p\left(\mathbb{R}^n\right)}=0.
\end{equation*}
As pointed out in \cite[Remark 1.2]{Liu-W-Zhao}, the case of
$\varphi'(0)<0$ can be considered in a similar way, which is related
to our results for small-amplitude $H^N$ initial data stated before.

When $s=0$, one can simply let $P_s$ be an identity operator and
thus  \eqref{eq.app} is equivalent with the usual scalar viscous
conservation law
\begin{equation*}
\dis  \pa_tu+\na\cdot f(u)=\De u.
\end{equation*}
The study of the above equation has a long history and has been much
more extensively investigated from different respects, particularly
well-posedness and finite-time blow up of solutions to the Cauchy
problem even for large initial data or general flux function, and
also stability of wave patterns such as the smooth shock wave,
rarefaction wave and contact discontinuity, {cf. \cite{Daf}.}

When $s=1$, a typical model related to \eqref{eq.app} takes the form
of
\begin{equation}\label{RCE}
\dis  \pa_t u+\na\cdot f(u)=\De P_1^{\eps}u,\ \ \De
P_1^{\eps}u:=\CF^{-1}\left\{\frac{-\varepsilon
|\xi|^2}{1+\varepsilon^2|\xi|^2}\CF u\right\},
\end{equation}
where $\eps>0$ is a parameter. This model was derived in
\cite{Ro-PRA} as the corresponding extension of the Navier-Stokes
equations via the regularization of the Chapman-Enskog expansion
from the Boltzmann equation, which is intended to obtain a bounded
approximation of the linearized collision operator for both low and
high frequencies. As pointed out in \cite{Ro-PRA, ScTa}, we remark
that the right-hand term of \eqref{RCE}  can be understood in the
sense that $\De P_1^\eps u$ behaves qualitatively like the usual
Navier-Stokes viscosity $\De u$ at the low frequency, while it
essentially acts as the damping force $-u$ at the high frequency.
The first rigorous mathematical study of \eqref{RCE} is given in
\cite{ScTa} for the one space dimension $n=1$, where the propagation
of smoothness of solutions with small initial data, existence of
traveling waves, existence of entropy solutions with BV initial data
and zero relaxation limit as $\eps\to 0$ were considered.

On the other hand, \eqref{RCE} with $\eps=1$ can be recovered as in
\cite{DFZ} from the following hyperbolic-elliptic coupled system
\begin{equation}\label{HE}
\left\{
\begin{array}{l}
\pa_tu+ \na\cdot [f(u) + q]= 0,\\[3mm]
- \na \na\cdot q+q+\na u=0.
\end{array}
\right.
\end{equation}
When $n=1$ and $f(u)=\frac{1}{2}u^2$, \eqref{HE} further reduces to
a simplified model system
\begin{equation}\label{Hamel}
    \left\{\begin{array}{l}
      \pa_t u+\pa_x (\frac{1}{2}u^2+q)=0,\\[3mm]
      -\pa_x^2 q+q+\pa_x u=0.
    \end{array}\right.
\end{equation}
The above system, which was first derived in \cite{Ham}, actually
arises from the study of radiation hydrodynamics. A more general
system  describing the one-dimensional motion of the radiating fluid
takes the form of
\begin{equation}\label{s1.M2}
    \left\{\begin{array}{l}
      \pa_t \rho +\pa_x (\rho u)=0,\\[3mm]
      \pa_t (\rho u)+\pa_x (\rho u^2 +p)=0,\\[3mm]
      \pa_t [\rho (e+\frac{1}{2}u^2)] +\pa_x[\rho u  (e+\frac{1}{2}u^2)+p
      u+q]=0,\\[3mm]
      -\pa_x^2 q+3 a^2 q +4 a \kappa \pa_x \theta^4=0,
    \end{array}\right.
\end{equation}
where the hydrodynamic functions $\rho\geq 0$, $u$, $p$, $e$ and
$\theta\geq 0$ denotes the mass density, velocity, pressure,
internal energy and absolute temperature of the fluid, respectively,
and $q$ is the radiative heat flux, and $a>0$, $\kappa>0$ are the
absorption coefficient and the Boltzmann constant, respectively; see
\cite{ViKr}.

For some mathematical results on models {\eqref{HE} and
\eqref{Hamel},} interested readers can refer to \cite{DFZ}. Here, we
only mention the study of \eqref{HE} in the case of high space
dimensions. In fact, the model \eqref{HE} over $\R^n$ for any $n\geq
1$ was first proposed in \cite{Fra-NDEA} by taking the proper
approximation of the high dimensional version of \eqref{s1.M2},
where the one-dimensional result in \cite{LaMa} was generalized to
the case of several dimensions and also the existence and uniqueness
of global weak, entropy solutions to the initial value problem and
the two different relaxation limits were studied. Recently, the
stability and convergence rate of solutions near constant states or
planar rarefaction waves were obtained in
\cite{GaZh1-M3AS,GaZh2-JDE,Ruan-Zhu} for the case  when the spatial
dimension takes values $2\leq n\leq 8$ on the basis of $L^p$-energy
method. For any $n\geq 1$, \cite{WaWa} exposed the pointwise
estimate of solutions by using the Green's function method, and
\cite{DFZ} also developed a refined energy method to consider the
stability of constants states, smooth planar waves and time-periodic
solutions in the presence of the time-periodic source. Very
recently, by using a time-weighted energy method,
\cite{Liu-Kawashima} obtained the global existence and optimal decay
estimates of classical solutions with small amplitude, and also
showed that the solution tends time-asymptotically to the linear
heat diffusion wave.

Next, for the general model \eqref{eq.app}, we turn to the case
when $s$ is strictly larger than one. If one takes $s=2$ and
$m(\xi)=(1+|\xi|^2+|\xi|^4)^{1/2}$, similarly as in \cite{DFZ}, the
corresponding model \eqref{eq.app} can be recovered from the
hyperbolic-elliptic system with a fourth-order elliptic part
\begin{equation*}
\left\{
\begin{array}{l}
\pa_t u+ \na\cdot [f(u) + q]= 0,\\[3mm]
\De^2q- \na \na\cdot q+q+\na u=0.
\end{array}
\right.
\end{equation*}
The above system  was considered in \cite{HoKa} for one space
dimension $n=1$, where authors established the global solvability
and asymptotic behavior of solutions for small initial data in
$H^N\cap L^1$ with $N\geq 7$, and also obtained the optimal
time-decay rates
\begin{equation*}
\left\|\partial_x^ku(t)\right\|\leq
C(1+t)^{-\frac{1}{4}-\frac{k}{2}}
\end{equation*}
for $0\leq k\leq [(N+1)/2]-2$. Later on, \cite{Kuka} considered the
one dimensional version of the following more general model
\begin{equation}\label{AHE}
\left\{
\begin{array}{l}
\pa_t u+ \na\cdot [f(u) + q]= 0,\\[3mm]
(-\De)^sq+q+\na u=0,
\end{array}
\right.
\end{equation}
for $s\geq 2$, which is indeed equivalent with \eqref{eq.app} with
$m(\xi)=(1+|\xi|^{2s})^{1/s}\sim 1+|\xi|^2$. For the model
\eqref{AHE} when $n=1$ and $s\geq 2$, \cite{Kuka} proved some
similar results as in  \cite{HoKa}. It should be pointed out that
both \cite{HoKa} and \cite{Kuka} also showed that the solution
approaches the nonlinear heat diffusion wave described by the
self-similar solution of the viscous Burgers equation as time tends
to infinity, and furthermore they proposed the method to deal with
the time-decay property of solutions for those equations of
regularity-loss type.

For the regularity-loss phenomenon, it actually has been observed for some other realistic systems of equations. For instance, \cite{DS-VMB} considered the optimal large-time behavior of solutions to the Vlasov-Maxwell-Boltzmann system
that  describes the dynamics of the kinetic plasma, and a similar but more elaborate result was also obtained in \cite{Duan-EM} for the Euler-Maxwell system in the context of fluid plasma. In addition, \cite{IHK} and \cite{IK} studied the time-decay property for the dissipative Timoshenko system. All these mentioned systems admit a common structure that they are of the regularity-loss type. A typical feature that this kind of regularity-loss structure generates is that the linear solution semigroup has an upper-bound estimate by $e^{-p(\xi)t}$, where the frequency function $p(\xi)$ behaves like $|\xi|^2/(1+|\xi|^2)^2$ corresponding to the case of $s=2$ that we discuss here. We believe that the current developed approach could become a powerful tool to provide much more elaborate results than those in \cite{DS-VMB,Duan-EM}.

Finally, we emphasize that although the general model that we propose here can cover several concrete examples in different physical contexts mentioned above, we work  only in the framework of the small amplitude classical solution, which thus makes it possible to found a general theory of the global existence and large-time behavior of solutions. On the other hand, it could be very interesting to consider some other issues in the current setting of the model, such as the well-posedness of the Cauchy problem with large initial data and finite time below up of solutions; these are left to our future study.

\subsection{Notations and arrangement of the paper}

Through this paper, $C$ denotes a generic positive (generally large)
constant and $\la$ denotes a generic positive (generally small)
constant.  For an integer $m\geq 0$, we use $H^m$ denotes the
Sobolev space $H^m(\R^n)$ with norm $\|\cdot\|_{H^m}$, and set
$L^2=H^0$ with norm $\|\cdot\|$ when $m=0$.
$\langle\cdot,\cdot\rangle$ denotes the inner product in $L^2$.
$L_1^1$ denotes the $(1+|x|)$-weighted $L^1(\R^n)$ space with the
norm
\begin{equation*}
\|f\|_{L_1^1}=\int_{\mathbb{R}^n}(1+|x|)|f(x)|dx.
\end{equation*}
For simplicity, $||f(\cdot, t)||_{L^p}$ is denoted by
$||f(t)||_{L^p}$ for $1\leq p\leq \infty$, especially, by $||f(t)||$
when $p=2$.

The notion $\widehat{u}(\xi)$ also denotes the Fourier transform
$\CF u(\xi)$ of $u(x)$. For a multi-index
$\al=(\al_1,\cdots,\al_n)$, we denote
\begin{equation*}
 \pa^{\al}=\pa_{x_1}^{\al_1}\cdots\pa_{x_n}^{\al_n},\ \
 \xi^\al=\xi_1^{\al_1}\cdots\xi_n^{\al_n}.
\end{equation*}
The length of $\al$ is $|\al|=\al_1+\cdots+\al_n$. $\be\leq \al$
means $\be_i\leq \al_i$ for all $1\leq i\leq n$. In addition, $\lag
\na \rag$ is defined in terms of the Fourier transform as
\begin{equation*}
\CF \lag \na \rag=\lag \xi\rag:=(1+|\xi|^2)^{\frac{1}{2}}.
\end{equation*}
{}{Finally, for $r\geq 0$, we define $[r]_+$ by
\begin{equation*}
    [r]_+
    =\left\{\begin{array}{ll}
      r, & \ \ \ \text{if $r$ is an integer},\\[3mm]
      {[}r{]}+1,&\ \ \ \text{otherwise},
     \end{array}\right.
\end{equation*}
where $[\cdot]$ means the integer part of the nonnegative argument.}

The rest of the paper is organized as follows. In Section
\ref{sec.l.decay}, we study the time-decay property of solutions to
the linearized equation by using the Fourier analysis. For all $s\in
{\mathbb{R}}$, $L^p$-$L^q$ type time-decay estimates on the linear
solution semigroup are obtained. The result implies that over the
low frequency part, the linear solution semigroup keep the same
algebraic time-decay rate for either $s\leq 1$ or $s>1$, and over
the high frequency part,  it decays with some exponential rate for
$s\leq 1$ but it does so with an algebraic time-rate for  $s>1$
depending on the  regularity degree of initial data.

In Section \ref{sec.non}, the global existence and optimal
time-decay rates of small-amplitude classical solutions to the
nonlinear Cauchy problem \eqref{CP.eq} are established. Depending on
the parameter $s\in \R$, we divide the proof by two cases when
$s\leq 1$ and $s>1$. For the case of $s>1$ which corresponds to the
regularity-loss type, the time-weighted energy method as in
\cite{HoKa,Kuka} is employed to overcome the weakly dissipative
property of the equation. The key part of the proof in this case is
to properly define two integers $N_0$ and $N_1$ such that
time-weighted a priori estimates could be closed.

In Section \ref{s4}, based on the $L^p$-$L^q$ time-decay estimates
on the linear solution semigroup obtained in Section
\ref{sec.l.decay}, we consider the large-time behavior of solutions
asymptotically tending to the linear heat diffusion wave
\eqref{def.heatwave}. {In the last Section \ref{sec.app},} for
completeness, we use an appendix to collect several lemmas with
proofs, which will be frequently used through the paper.


\section{Decay property of linearized solutions}\label{sec.l.decay}

In this section, we study the time-decay property of solutions to the
Cauchy problem on the linearized equation
\begin{equation}\label{LE.eq1}
\left\{\begin{array}{l}
 \dis  \pa_t u-\De P_su=0,\ \  x\in \R^n, t>0,\\[3mm]
\dis u|_{t=0}=u_0,\ \  x\in \R^n.
\end{array}\right.
\end{equation}
Here, initial data $u_0=u_0(x)$ is given. It is easy to see that in terms of the
Fourier transform in $x$, the solution to \eqref{LE.eq1}
is  solved as
\begin{equation*}
\hat{u}(\xi,t)={e}^{-\frac{|\xi|^2}{m(\xi)^s}t}\hat{u}_0(\xi).
\end{equation*}
As usual, the solution semigroup ${e}^{\De P_s t}$ associated with
the linearized Cauchy problem \eqref{LE.eq1} is defined by
\begin{equation*}
{e}^{\De P_s t} u_0=\mathcal{F}^{-1}\left\{{\rm
e}^{-\frac{|\xi|^2}{m(\xi)^s}t}\hat{u}_0(\xi)\right\}.
\end{equation*}

Applying Fourier analysis, we now establish the following
$L^p$-$L^q$ estimate on the solution semigroup ${e}^{\De P_s t}$,
which will play a key role in proving the optimal decay estimate on
the low-order derivatives of the  solutions to the nonlinear Cauchy
problem \eqref{CP.eq}.

\begin{lemma}\label{lem.decay}
Let $n\geq 1$, $k\geq 0$  be integers, $1\leq p,r\leq 2\leq q\leq
\infty$, $\ell\geq 0$ and $s\in \R$. Define $[\cdot]_*$ by
\begin{equation}\label{lem.decay.1}
    \left[\ell+n\left(\frac{1}{r}-\frac{1}{q}\right)\right]_*
    =\left\{\begin{array}{ll}
      \ell, & \ \ \ \text{if $\ell$ is integer and $r=q=2$},\\[3mm]
      {[}\ell+n(\frac{1}{r}-\frac{1}{q}){]}+1,  &\ \ \ \text{otherwise},
     \end{array}\right.
\end{equation}
where $[\cdot]$ means the integer part of the nonnegative argument.
Under the assumption $(\mathcal{M}_1)$ on $m(\xi)$, the solution
semigroup $e^{\De P_s t}$ of the Cauchy problem \eqref{LE.eq1}
satisfies the following time-decay property:
\medskip

\noindent{(i)} When $s\leq 1$,
\begin{equation}\label{lem.decay.2}
\|\na^k e^{\De P_s t} u_0\|_{L^q}\leq C
(1+t)^{-\frac{n}{2}(\frac{1}{p}-\frac{1}{q})-\frac{k}{2}}
\|u_0\|_{L^p}+ C e^{-\la
t}\|\na^{k+[n(\frac{1}{r}-\frac{1}{q})]_*}{u}_0\|_{L^r}
\end{equation}
for any $t\geq 0$, where $C=C(n,k,p,r,q,s)$ and $\la=\la(s)>0$ are constants independent of $u_0$;

\medskip
\noindent{(ii)} When $s>1$,
\begin{multline}\label{lem.decay.3}
\|\na^k e^{\De P_s t} u_0\|_{L^q}\leq C
(1+t)^{-\frac{n}{2}(\frac{1}{p}-\frac{1}{q})-\frac{k}{2}}
\|u_0\|_{L^p}\\
+ C
(1+t)^{-\frac{\ell}{2(s-1)}}\|\na^{k+[\ell+n(\frac{1}{r}-\frac{1}{q})]_*}{u}_0\|_{L^r}
\end{multline}
for any $t\geq 0$, where  $C=C(n,k,p,r,q,\ell,s)$ is a constant independent of $u_0$.
\end{lemma}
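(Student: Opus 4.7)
The plan is to work in Fourier variables, where the semigroup acts as multiplication by $\sigma_s(t,\xi):=e^{-t|\xi|^2/m(\xi)^s}$, and split $\R^n$ into a low-frequency piece $\{|\xi|\leq R\}$ and a high-frequency piece $\{|\xi|>R\}$ for some fixed large $R$. Under $(\CM_1)$, the symbol behaves like $e^{-ct|\xi|^2}$ on the low-frequency piece regardless of $s$, while on the high-frequency piece $|\xi|^2/m(\xi)^s\sim|\xi|^{2-2s}$, whose exponent changes sign precisely at $s=1$. These two pieces produce the two terms on the right-hand side of \eqref{lem.decay.2}--\eqref{lem.decay.3}, and the splitting at $s=1$ is exactly the regularity-gain/regularity-loss dichotomy.

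For the low-frequency contribution, since $m(\xi)$ is uniformly bounded above and below on $\{|\xi|\leq R\}$, one has $|\xi|^2/m(\xi)^s\geq c|\xi|^2$, so the truncated kernel $\CF^{-1}[\chi_{|\xi|\leq R}(i\xi)^\al\sigma_s(t,\xi)]$ admits uniform $L^a(\R^n)$ Gaussian-type bounds in the style of the heat kernel. Young's convolution inequality with $1+1/q=1/a+1/p$ then yields a bound of order $t^{-n/2(1/p-1/q)-k/2}\|u_0\|_{L^p}$. For small $t$, the smooth cutoff multiplier $\chi_{|\xi|\leq R}(i\xi)^\al\sigma_s(t,\xi)$ is uniformly bounded as an $L^p$-multiplier on $0\leq t\leq 1$, which replaces the singular factor by a constant and produces the $(1+t)^{-\cdot}$ form stated in the lemma.

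For the high-frequency contribution, on $\{|\xi|>R\}$ we use $|\xi|^2/m(\xi)^s\sim|\xi|^{2-2s}$. When $s\leq 1$ the exponent $2-2s$ is nonnegative, so $|\xi|^2/m(\xi)^s\geq\la$ for some $\la=\la(R,s)>0$, giving the uniform exponential decay $\sigma_s(t,\xi)\leq e^{-\la t}$. When $s>1$ the decay deteriorates with frequency, and the algebraic rate is extracted from the elementary inequality
\begin{equation*}
e^{-x}\leq C_\beta\,x^{-\beta},\qquad x>0,\ \beta\geq 0,
\end{equation*}
applied with $x=\la t|\xi|^{-2(s-1)}$ and $\beta=\ell/(2(s-1))$, yielding $\sigma_s(t,\xi)\leq C(1+t)^{-\ell/(2(s-1))}|\xi|^\ell$ on $\{|\xi|>R\}$. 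In either case the task reduces to estimating $\|\na^k\CF^{-1}[\chi_{|\xi|>R}|\xi|^{\ell'}\hat u_0]\|_{L^q}$ with $\ell'=0$ or $\ell$ in terms of $\|u_0\|_{L^r}$. This is handled by Hausdorff-Young followed by H\"older in Fourier: for $q\geq 2$, $\|f\|_{L^q}\leq\|\hat f\|_{L^{q'}}$, and then one factors $|\xi|^{k+\ell'}=|\xi|^{-m}\cdot|\xi|^{k+\ell'+m}$ with H\"older $1/q'=1/\al+1/r'$, so that $\chi_{|\xi|>R}|\xi|^{-m}\in L^\al$ is required, i.e. $m\al>n$. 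This forces $m$ to be the smallest integer strictly above $n(1/r-1/q)$, namely $[n(1/r-1/q)]+1$ in general; in the Plancherel case $r=q=2$ no H\"older step is needed and $m=0$ suffices, which is precisely the content of the symbol $[\cdot]_*$ in \eqref{lem.decay.1}.

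The main delicate point is the careful bookkeeping of the index $[\cdot]_*$: the distinction between the Plancherel case $r=q=2$ (no H\"older, no additive $+1$) and the generic case (strict integrability threshold), together with the non-integer treatment of $\ell+n(1/r-1/q)$. A secondary issue is fixing $R$ once so that both the low-frequency Gaussian lower bound and the high-frequency symbol lower bound on $|\xi|^2/m(\xi)^s$ hold simultaneously; any sufficiently large $R$ depending on the constants in $(\CM_1)$ works. Combining the resulting low- and high-frequency estimates then assembles to \eqref{lem.decay.2} for $s\leq 1$ and to \eqref{lem.decay.3} for $s>1$.
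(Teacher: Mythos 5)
Your proposal is correct and essentially matches the paper's proof: both split frequency space at $|\xi|=R$, obtain the heat-kernel rate from the low-frequency piece using $|\xi|^2/m(\xi)^s\gtrsim|\xi|^2$ there, treat the high-frequency piece via a uniform lower bound on $|\xi|^2/m(\xi)^s$ when $s\leq 1$ and via the elementary bound $\sup_{|\xi|\geq R}|\xi|^{-\ell}e^{-ct|\xi|^{-2(s-1)}}\lesssim(1+t)^{-\ell/(2(s-1))}$ when $s>1$, and then control the residual $|\xi|^{k+\ell}\hat{u}_0$ factor by Hausdorff--Young followed by H\"older in frequency, with exactly the same case split that generates the index $[\cdot]_*$. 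The only cosmetic deviation is that you bound the low-frequency contribution by Young's convolution inequality in physical space (with a separate small-$t$ patch), whereas the paper applies Hausdorff--Young at the outset and then H\"olders in frequency over $\{|\xi|\leq R\}$; the two implementations are interchangeable and produce the same $(1+t)^{-\frac{n}{2}(\frac{1}{p}-\frac{1}{q})-\frac{k}{2}}\|u_0\|_{L^p}$ term.
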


\begin{proof}
Take
$2\leq q\leq \infty$ and an integer $k\geq 0$. From Hausdorff-Young
inequality,
\begin{multline}\label{lem.decay.p1}
\|\na^k e^{\De P_s t}u_0\|_{L^q(\R^n_x)}
\leq C\left\||\xi|^ke^{-\frac{|\xi|^2}{m(\xi)^s}t}\hat{u}_0\right\|_{L^{q'}(\R^n_\xi)}\\
\leq C\left\||\xi|^ke^{-\frac{|\xi|^2}{m(\xi)^s}t}\hat{u}_0\right\|_{L^{q'}(|\xi|\leq R)}
+C\left\||\xi|^ke^{-\frac{|\xi|^2}{m(\xi)^s}t}\hat{u}_0\right\|_{L^{q'}(|\xi|\geq R)}\\
:=I_{s,\leq}+I_{s,\geq},
\end{multline}
where $\frac{1}{q}+\frac{1}{q'}=1$ and $R>0$ is arbitrarily chosen. It can be directly observed  that
$\frac{|\xi|^2}{m(\xi)^s}$ satisfies the following estimates:

\medskip
\noindent(i) For any $s\in \R$, there are constants $C_1(s,R)\geq C_0(s,R)>0$ such that
 \begin{equation*}
  C_0(s,R)|\xi|^2 \leq \frac{2|\xi|^2}{m(\xi)^s}\leq C_1(s,R)|\xi|^2
\end{equation*}
holds true over $|\xi|\leq R$. This follows from the fact that $m(\xi)$
is strictly positive and continuous in $\xi\in \R^n$.

\medskip
\noindent(ii) For $s\leq 1$, there is $C_3(s,R)>0$ such that
\begin{equation*}
    \inf_{|\xi|\geq R}\frac{|\xi|^2}{m(\xi)^s}=C_3(s,R).
\end{equation*}
For $s>1$, there is a constant $C_4(s,R)=\frac{1}{m_1(1+1/R)^s}>0$ such that
\begin{equation*}
   \frac{|\xi|^2}{m(\xi)^s}\geq C_4(s,R)|\xi|^{-2(s-1)}
\end{equation*}
holds true over $|\xi|\geq R$. These two lower-bound estimates are
due to the assumption $(\mathcal{M}_1)$ of $m(\xi)$.

\medskip

Now, let us make estimates on $I_{s,\leq}$ and $I_{s,\geq}$. First,
for $I_{s,\leq}$ with $s\in \R$, {by using the standard way, for instance as
in \cite{Ka} or \cite[Lemma 3.1]{Kuka}}, it follows from (i) above that
\begin{multline}\label{lem.decay.p2}
I_{s,\leq}=C\left\||\xi|^ke^{-\frac{|\xi|^2}{m(\xi)^s}t}\hat{u}_0\right\|_{L^{q'}(|\xi|\leq R)}
\leq C\left\||\xi|^ke^{-C_0(s,R)|\xi|^2t}\hat{u}_0\right\|_{L^{q'}(|\xi|\leq R)}\\
\leq C(1+t)^{-\frac{n}{2}(\frac{1}{p}-\frac{1}{q})-\frac{k}{2}}\|u_0\|_{L^p(\R^n_x)},
\end{multline}
where $1\leq p\leq 2$. Next, we use (ii) to estimate $I_{s,\geq}$. When $s\leq 1$, it follows from (ii) above
that
\begin{multline}\label{lem.decay.p3}
  I_{s,\geq}=  C\left\||\xi|^ke^{-\frac{|\xi|^2}{m(\xi)^s}t}\hat{u}_0\right\|_{L^{q'}(|\xi|\geq R)}
  \leq  C e^{-C_3(s,R)t}\left\||\xi|^k \hat{u}_0\right\|_{L^{q'}(|\xi|\geq R)}\\
  \leq C e^{-C_3(s,R)t} \left\||\xi|^{-\frac{r'-q'}{r'q'}(n+\de)}\right\|_{L^{\frac{r'q'}{r'-q'}}(|\xi|\geq R)}
  \left\||\xi|^{k+\frac{r'-q'}{r'q'}(n+\de)}\hat{u}_0\right\|_{L^{r'}(|\xi|\geq R)},
\end{multline}
where H\"{o}lder inequality $1/q'=(r'-q')/r'q'+1/r'$ with $1/r'+1/r=1$ for given $1\leq r\leq 2$, and $\de>0$
is small enough. It is obvious that when $r=q=2$ which implies $r'=q'=2$, it is straightforward to obtain
\begin{equation*}
     I_{s,\geq}\leq C e^{-C_3(s,R)t} \left\||\xi|^k \hat{u}_0\right\|_{L^{r'}(\R^n_\xi)}
     \leq C e^{-C_3(s,R)t}\|\na^k u_0\|_{L^r(\R^n_x)}.
\end{equation*}
Otherwise, when $r\neq2$ or $q\neq 2$ which implies $r<q$ and hence $(r'-q')/r'q'=1/r-1/q>0$,
it follows from \eqref{lem.decay.p3} that
\begin{multline*}
 I_{s,\geq} \leq  C e^{-C_3(s,R)t}
 \left(\int_{|\xi|\geq R}|\xi|^{-(n+\de)}d\xi\right)^{\frac{1}{r}-\frac{1}{q}}
 \left\||\xi|^{k+(\frac{1}{r}-\frac{1}{q})(n+\de)}\hat{u}_0\right\|_{L^{r'}(|\xi|\geq R)}\\
 \leq  C(n,\de, R) e^{-C_3(s,R)t}
 \left\||\xi|^{k+[(\frac{1}{r}-\frac{1}{q})n]+1}\hat{u}_0\right\|_{L^{r'}(\R^n_\xi)}\\
 \leq C(n,\de, R) e^{-C_3(s,R)t}\|\na^{k+[n(\frac{1}{r}-\frac{1}{q})]_*}{u}_0\|_{L^r(\R^n_x)}.
\end{multline*}
Therefore, we finished estimates on $I_{s,\geq}$ for $s\leq 1$,
which together with \eqref{lem.decay.p2}, give \eqref{lem.decay.2}
after plugging them into \eqref{lem.decay.p1}. In order to prove
\eqref{lem.decay.3}, the rest is to estimate $I_{s,\geq}$ with
$s>1$. In fact, when $s>1$, it follows from (ii) above that
\begin{multline}\label{lem.decay.p4}
  I_{s,\geq}=  C\left\||\xi|^ke^{-\frac{|\xi|^2}{m(\xi)^s}t}\hat{u}_0\right\|_{L^{q'}(|\xi|\geq R)}\\
  \leq  C \left\||\xi|^k e^{-C_4(s,R)|\xi|^{-2(s-1)}t} \hat{u}_0\right\|_{L^{q'}(|\xi|\geq R)}\\
   \leq C\sup\limits_{|\xi|\geq
R}\left\{\frac{1}{|\xi|^{\ell}}{\rm
e}^{-C_4(s,R)\frac{t}{|\xi|^{2(s-1)}}}\right\}
\left\||\xi|^{k+\ell} \hat{u}_0\right\|_{L^{q'}(|\xi|\geq R)},
\end{multline}
where $\ell\geq 0$ is fixed. By using the inequality
\begin{equation*}
  \sup\limits_{|\xi|\geq
R}\left\{\frac{1}{|\xi|^{\ell}}{\rm
e}^{-C_4(s,R)\frac{t}{|\xi|^{2(s-1)}}}\right\}\leq C(s,\ell,R) (1+t)^{-\frac{\ell}{2(s-1)}}
\end{equation*}
for any $t\geq 0$, one further has
\begin{equation*}
  I_{s,\geq}\leq C(s,\ell,R) (1+t)^{-\frac{\ell}{2(s-1)}}
  \left\||\xi|^{k+\ell} \hat{u}_0\right\|_{L^{q'}(|\xi|\geq R)}.
\end{equation*}
Similarly to control $I_{s,\geq}$ for $s\leq 1$, it also holds that
\begin{equation*}
    \left\||\xi|^{k+\ell} \hat{u}_0\right\|_{L^{q'}(|\xi|\geq R)}
    \leq C(n,k,\ell, R)\|\na^{k+[\ell+n(\frac{1}{r}-\frac{1}{q})]_*}{u}_0\|_{L^r(\R^n_x)}
\end{equation*}
for $1\leq r\leq 2$. Therefore, one has the estimate on $I_{s,\geq}$
for $s>1$ by
\begin{equation*}
  I_{s,\geq}\leq C(n,k,\ell, R)(1+t)^{-\frac{\ell}{2(s-1)}}
  \|\na^{k+[\ell+n(\frac{1}{r}-\frac{1}{q})]_*}{u}_0\|_{L^r(\R^n_x)},
\end{equation*}
which once again together with \eqref{lem.decay.p2}, prove
\eqref{lem.decay.3} after plugging them into \eqref{lem.decay.p1}.
This completes the proof of Lemma \ref{lem.decay}.
\end{proof}

It is not clear whether there is an explicit representation of
solutions to the Cauchy problem \eqref{LE.eq1} since the explicit
form of the pseudo-differential operator $P_s$ or the frequency
function $m(\xi)$ is unknown. Thus, it is interesting to  find an
asymptotical profile of solutions which has some relatively simple
form. For given $u_0$, to say that $u^{\rm asy}(x, t)$ is a {\it
good} asymptotical profile of the solution $e^{\De P_s t}u_0$, it
means that the rate of $e^{\De P_s t}u_0$ converging to  $u^{\rm
asy}(x, t)$ is strictly larger than that of the solution $e^{\De P_s
t}u_0$ itself decaying to zero in the same space. In our case, an
expected choice is the diffusive wave corresponding to the solution
to some heat equation. In fact, set $\mu_s=\frac{1}{m(0)^s}$. Then,
$u^{\rm asy}(x, t)=e^{\mu_s\De t}u_0$ satisfies
\begin{equation}\label{heat.eq}
\left\{\begin{array}{l}
 \dis  \pa_t u^{\rm asy}-\mu_s\De u^{\rm asy}=0,\ \  x\in \R^n, t>0,\\[3mm]
\dis u^{\rm asy}|_{t=0}=u_0,\ \  x\in \R^n.
\end{array}\right.
\end{equation}
For the time-asymptotical rate of $e^{\De P_s t}u_0$  tending to $e^{\mu_s\De t}u_0$, we have the following

\begin{lemma}
\label{lem.decay.d} Let $n\geq 1$, $k\geq 0$  be integers, $1\leq
p,r\leq 2\leq q\leq \infty$, $\ell\geq 0$ and $s\in \R$. Define $
[\ell+n(\frac{1}{r}-\frac{1}{q})]_*$ as in \eqref{lem.decay.1}.
Under the assumptions $(\mathcal{M}_1)$ and $(\mathcal{M}_2)$ on
$m(\xi)$, the solution semigroup $e^{\De P_s t}$ of the Cauchy
problem \eqref{LE.eq1} asymptotically tends to the heat semigroup
$e^{\mu_s\De t}$ of the Cauchy problem \eqref{heat.eq} with some
time rates stated as follows. Here, in (i) and (ii), $\si>0$ is
given in the assumption $(\mathcal{M}_2)$.

\medskip

\noindent{(i)} When $s\leq 1$,
\begin{multline}\label{lem.decay.d.1}
\|\na^k (e^{\De P_s t}-e^{\mu_s\De t}) u_0\|_{L^q}\leq C
(1+t)^{-\frac{n}{2}(\frac{1}{p}-\frac{1}{q})-\frac{k+\si}{2}}
\|u_0\|_{L^p}\\
+ C e^{-\la t}\|\na^{k+[n(\frac{1}{r}-\frac{1}{q})]_*}{u}_0\|_{L^r}
\end{multline}
for any $t\geq 0$, where $C=C(n,k,p,r,q,s)$ and $\la=\la(s)>0$ are constants independent of $u_0$;

\medskip
\noindent{(ii)} When $s>1$,
\begin{multline}\label{lem.decay.d.2}
\|\na^k (e^{\De P_s t}-e^{\mu_s\De t}) u_0\|_{L^q}\leq C
(1+t)^{-\frac{n}{2}(\frac{1}{p}-\frac{1}{q})-\frac{k+\si}{2}}
\|u_0\|_{L^p}\\
+ C e^{-\la t}\|\na^{k+[n(\frac{1}{r}-\frac{1}{q})]_*}{u}_0\|_{L^r}\\
+ C
(1+t)^{-\frac{\ell}{2(s-1)}}\|\na^{k+[\ell+n(\frac{1}{r}-\frac{1}{q})]_*}{u}_0\|_{L^r}
\end{multline}
for any $t\geq 0$, where  $C=C(n,k,p,r,q,\ell,s)$ is a constant independent of $u_0$.
\end{lemma}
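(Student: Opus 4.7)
\medskip\noindent\textbf{Proof proposal.} The plan is to run the same frequency-splitting argument as in the proof of Lemma \ref{lem.decay}, but applied to the difference of Fourier multipliers
\begin{equation*}
K_s(\xi,t):=e^{-\frac{|\xi|^2}{m(\xi)^s}t}-e^{-\mu_s|\xi|^2 t},
\end{equation*}
using $(\mathcal{M}_2)$ to gain an extra $|\xi|^{\sigma}$ factor on the low-frequency piece. Precisely, by Hausdorff--Young, for any $R>0$,
\begin{equation*}
\|\na^k(e^{\De P_s t}-e^{\mu_s\De t})u_0\|_{L^q}\leq C\bigl\||\xi|^k K_s(\xi,t)\hat u_0\bigr\|_{L^{q'}(|\xi|\leq R)}+C\bigl\||\xi|^k K_s(\xi,t)\hat u_0\bigr\|_{L^{q'}(|\xi|\geq R)}=:J_{s,\leq}+J_{s,\geq}.
\end{equation*}

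\medskip\noindent\emph{Low frequencies.} Choose $R>0$ so small that $(\mathcal{M}_2)$ applies on $|\xi|\leq R$ and $m(\xi)$ stays in a compact subinterval of $(0,\infty)$ there. Since $x\mapsto x^{-s}$ is smooth away from $0$, $(\mathcal{M}_2)$ yields
\begin{equation*}
\left|\frac{1}{m(\xi)^s}-\mu_s\right|\leq C|\xi|^{\sigma},\qquad |\xi|\leq R.
\end{equation*}
Applying the elementary identity $e^{-A}-e^{-B}=(B-A)\int_0^1 e^{-(A+\theta(B-A))}d\theta$ with $A=\frac{|\xi|^2}{m(\xi)^s}t$, $B=\mu_s|\xi|^2 t$, and using that both $A$ and $B$ are comparable to $|\xi|^2t$ on $|\xi|\leq R$, I get
\begin{equation*}
|K_s(\xi,t)|\leq C|\xi|^{2+\sigma}t\,e^{-c|\xi|^2 t}\leq C|\xi|^{\sigma}e^{-c'|\xi|^2 t},\qquad |\xi|\leq R.
\end{equation*}
The extra weight $|\xi|^{\sigma}$ will feed through the standard $L^p$-to-$L^q$ heat-kernel estimate used already for $I_{s,\leq}$ in Lemma \ref{lem.decay}, contributing an additional factor $(1+t)^{-\sigma/2}$, which gives
\begin{equation*}
J_{s,\leq}\leq C(1+t)^{-\frac{n}{2}(\frac{1}{p}-\frac{1}{q})-\frac{k+\sigma}{2}}\|u_0\|_{L^p}.
\end{equation*}

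\medskip\noindent\emph{High frequencies.} I write $K_s=e^{-|\xi|^2/m(\xi)^s\,t}-e^{-\mu_s|\xi|^2 t}$ and estimate the two pieces separately on $|\xi|\geq R$. The first piece is exactly the quantity $I_{s,\geq}$ treated in the proof of Lemma \ref{lem.decay}: for $s\leq 1$ it yields a contribution bounded by $Ce^{-\lambda t}\|\nabla^{k+[n(1/r-1/q)]_*}u_0\|_{L^r}$, and for $s>1$ a contribution bounded by $C(1+t)^{-\ell/(2(s-1))}\|\nabla^{k+[\ell+n(1/r-1/q)]_*}u_0\|_{L^r}$. The second piece is the high-frequency part of the genuine heat semigroup: since $e^{-\mu_s|\xi|^2 t}\leq e^{-\mu_s R^2 t/2}e^{-\mu_s|\xi|^2 t/2}$ for $|\xi|\geq R$, exactly the same H\"older argument used to control $I_{s,\geq}$ when $s\leq 1$ (splitting off a fixed weight $|\xi|^{-(n+\delta)(r'-q')/(r'q')}$ and applying H\"older in $\xi$) yields the bound $Ce^{-\lambda t}\|\nabla^{k+[n(1/r-1/q)]_*}u_0\|_{L^r}$. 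Combining the two pieces gives the stated high-frequency bound in each case.

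\medskip\noindent\emph{Putting it together.} Adding $J_{s,\leq}$ and $J_{s,\geq}$ gives \eqref{lem.decay.d.1} and \eqref{lem.decay.d.2}. The only real work beyond Lemma \ref{lem.decay} is the low-frequency symbol estimate, which I expect to be the main point: one must use both that $m$ stays positive and bounded on $|\xi|\leq R$ and that $x\mapsto x^{-s}$ is locally Lipschitz there, so that the Hölder/H\"older-type modulus of continuity $|\xi|^{\sigma}$ from $(\mathcal{M}_2)$ transfers to $1/m(\xi)^s-\mu_s$ uniformly in $s\in\mathbb{R}$. The rest of the argument is a mechanical repetition of the Hausdorff--Young and H\"older splittings already performed in Lemma \ref{lem.decay}, so no new combinatorial or interpolation input is required.
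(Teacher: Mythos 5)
Your proposal is correct and follows essentially the same route as the paper's proof: the same Hausdorff--Young frequency splitting, the same use of $(\mathcal{M}_2)$ and the smoothness of $x\mapsto x^{-s}$ on a compact interval to convert $|m(\xi)-m(0)|\leq m_2|\xi|^{\sigma}$ into the symbol bound $|K_s(\xi,t)|\leq C|\xi|^{\sigma}e^{-c'|\xi|^2 t}$ at low frequency (the paper uses the parametrization $e^{-A}-e^{-B}=-|\xi|^2t\int_{1/m(0)^s}^{1/m(\xi)^s}e^{-\theta|\xi|^2 t}\,d\theta$ in place of your $\int_0^1$ form, but these are identical), and the same splitting of the high-frequency part into the two semigroup pieces estimated as in Lemma \ref{lem.decay}. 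No gaps; the only difference is cosmetic.
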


\begin{proof}
Take
an integer $k\geq 0$. Consider
\begin{equation}\label{lem.decay.d.p1}
    \left|\CF\{\na^k (e^{\De P_s t}-e^{\mu_s\De t}) u_0\}\right|=|\xi|^k\left|e^{-\frac{|\xi|^2}{m(\xi)^s}t}
    -e^{-\frac{|\xi|^2}{m(0)^s}t}\right|\cdot |\hat{u}_0|.
\end{equation}
In what follows we estimate the above function over the low frequency part $|\xi|\leq R$ for some $R>0$. One can claim that when $R>0$ is small enough, there are constants $C=C(s,R)>0$, $\la=\la(s,R)>0$ such that
\begin{equation}\label{lem.decay.d.p2}
  \left|e^{-\frac{|\xi|^2}{m(\xi)^s}t}
    -e^{-\frac{|\xi|^2}{m(0)^s}t}\right|\leq C|\xi|^\si e^{-\la |\xi|^2 t}
\end{equation}
holds true over $|\xi|\leq R$. In fact, choose a properly small
$R>0$ such that the assumption $(\mathcal{M}_2)$ holds, that is
\begin{equation*}
    \sup_{|\xi|\leq R}\frac{|m(\xi)-m(0)|}{|\xi|^\si}\leq m_2
\end{equation*}
for constants $\si>0$ and $m_2>0$. Notice
\begin{equation*}
    e^{-\frac{|\xi|^2}{m(\xi)^s}t}
    -e^{-\frac{|\xi|^2}{m(0)^s}t}=-|\xi|^2 t\int_{\frac{1}{m(0)^s}}^{\frac{1}{m(\xi)^s}}
    e^{-\theta |\xi|^2 t}d \theta
\end{equation*}
so that for any $|\xi|\leq R$,
\begin{eqnarray*}
  \left|e^{-\frac{|\xi|^2}{m(\xi)^s}t}
    -e^{-\frac{|\xi|^2}{m(0)^s}t}\right| &\leq & |\xi|^2t \left|\frac{1}{m(\xi)^s}
    -\frac{1}{m(0)^s}\right| e^{-\la_{s,R} |\xi|^2 t}\\
    &\leq &\frac{|s||\xi|^2t}{\min\limits_{|\xi|\leq R}\left[m(\xi)\right]^{s+1}}
    |m(\xi)-m(0)|e^{-\la_{s,R} |\xi|^2 t}\nonumber\\
    &\leq &C(s,R)|\xi|^\si e^{-\frac{\la_{s,R}}{2}|\xi|^2
    t},\nonumber
\end{eqnarray*}
where the strict positivity and continuity of $m(\xi)$ have been used, and $\la_{s,R}$ is defined by
\begin{equation*}
    \la_{s,R}=\min_{|\xi|\leq R}\frac{1}{m(\xi)^s}.
\end{equation*}
Therefore the claim mentioned before follows.

Now, take $2\leq q\leq \infty$ with $1/q+1/q'=1$. Similarly as in
\eqref{lem.decay.p1}, by Hausdorff-Young inequality, it  follows
from \eqref{lem.decay.d.p1} and \eqref{lem.decay.d.p2} that
\begin{multline*}
\|\na^k  (e^{\De P_s t}-e^{\mu_s\De t}) u_0\|_{L^q(\R^n_x)}
\leq C\left\||\xi|^{k+\si}e^{-\la |\xi|^2 t}\hat{u}_0\right\|_{L^{q'}(|\xi|\leq R)}\\
+C\left\||\xi|^ke^{-\mu_s |\xi|^2 t}\hat{u}_0\right\|_{L^{q'}(|\xi|\geq R)}
+C\left\||\xi|^ke^{-\frac{|\xi|^2}{m(\xi)^s}t}\hat{u}_0\right\|_{L^{q'}(|\xi|\geq R)}.
\end{multline*}
Here, three terms on the r.h.s.~can be estimated in the same way as in the proof of Lemma \ref{lem.decay}.
Precisely, the estimate on the first term is similar to \eqref{lem.decay.p2} with $k$ replaced by $k+\si$,
the second term similar to \eqref{lem.decay.p3} and the third term similar to the combination of
\eqref{lem.decay.p3} and \eqref{lem.decay.p4}.
Collecting these estimates imply the desired inequalities \eqref{lem.decay.d.1} and \eqref{lem.decay.d.2}
for $s\leq 1$ and $s>1$,
respectively. For simplicity, all the details are omitted. This completes the proof of Lemma
\ref{lem.decay.d}.
\end{proof}


\section{Nonlinear Cauchy problem}\label{sec.non}

In this section, we are concerned with the global existence and
time-decay rates of small-amplitude classical solutions to the
Cauchy problem \eqref{CP.eq} of the nonlinear scalar conservation
laws for all $n\geq 1$ and all $s\in \R$ under the assumption $(\CM_1)$
on $m(\xi)$. Throughout this section, we always suppose the
assumption $(\CM_1)$ and shall not mention it for simplicity.

\subsection{Uniform-in-time a priori estimates}
In this subsection, by using the energy method as in \cite{DFZ},
we obtain some uniform-in-time a priori estimates on the solution $u(x,t)$
to the Cauchy problem \eqref{CP.eq} for all $n\geq 1$ and $s\in \R$. Recall the equation
\begin{equation}\label{eq.pri}
\pa_t u+\na\cdot f(u)=\De P_s u,\ \ x\in\R^n, t\geq 0.
\end{equation}
In what follows, $u(x,t)$ is supposed to be smooth in $x$, $t$ and satisfy the above equation over
$0\leq t\leq T$ for some $0<T\leq \infty$.

There are two steps in uniform-in-time a priori estimates. The
first step is to estimate the zero-order term, and the second step is to consider the energy estimate of the derivatives on the basis of Lemma \ref{s5.le1} when treating the nonlinear term.

\begin{lemma}\label{lem.pri.0}
There is $\la>0$ such that
\begin{equation}\label{lem.pri.0.1}
    \frac{d}{dt}\|u(t)\|^2+\la \|\na\lag \na \rag^{-s} u(t)\|^2\leq 0
\end{equation}
for any $0\leq t\leq T$.
\end{lemma}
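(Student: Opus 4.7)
The approach is a straightforward $L^2$ energy estimate, carried out on the Fourier side for the dissipation. I multiply equation \eqref{eq.pri} by $u$ and integrate over $\R^n$, which gives
\[
\frac{1}{2}\frac{d}{dt}\|u(t)\|^2 + \langle u, \nabla\cdot f(u)\rangle - \langle u,\Delta P_s u\rangle = 0.
\]
The plan is then to show that the middle term vanishes and that the last term dominates $\lambda\|\nabla\lag\na\rag^{-s}u\|^2$ up to a sign.

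For the flux term, I note that $u\,\partial_{x_j}f_j(u) = u f_j'(u)\,\partial_{x_j}u = \partial_{x_j}H_j(u)$, where $H_j(v):=\int_0^v w f_j'(w)\,dw$ satisfies $H_j(0)=0$. Since $u(\cdot,t)\in H^N$ with $N$ large, $H_j(u)\to 0$ at infinity, hence $\int_{\R^n}\partial_{x_j}H_j(u)\,dx=0$ and the full term $\langle u,\nabla\cdot f(u)\rangle$ vanishes. (The assumption $f_j(0)=0$ is not even needed here; this is simply the standard divergence structure of a scalar conservation law.)

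For the dissipation term, Plancherel gives
\[
-\langle u,\Delta P_s u\rangle = \int_{\R^n}\frac{|\xi|^2}{m(\xi)^s}|\hat u(\xi)|^2\,d\xi,
\]
and I compare this with $\|\nabla\lag\na\rag^{-s}u\|^2 = \int_{\R^n}|\xi|^2(1+|\xi|^2)^{-s}|\hat u|^2 d\xi$. This is the step to be careful about: I must argue that assumption $(\CM_1)$, i.e. $m_0(1+|\xi|^2)\leq m(\xi)\leq m_1(1+|\xi|^2)$, yields a pointwise lower bound $m(\xi)^{-s}\geq \lambda(1+|\xi|^2)^{-s}$ for a constant $\lambda=\lambda(s,m_0,m_1)>0$, \emph{uniformly} in the sign of $s$. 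Indeed, for $s\geq 0$ the upper bound on $m$ forces $m^{-s}\geq m_1^{-s}(1+|\xi|^2)^{-s}$, while for $s<0$ the lower bound on $m$ gives $m^{-s}=m^{|s|}\geq m_0^{|s|}(1+|\xi|^2)^{|s|}$; in either case one can take $\lambda=\min\{m_0^{-s},m_1^{-s}\}$. Plugging this lower bound into the Fourier integral yields $-\langle u,\Delta P_s u\rangle \geq \lambda\|\nabla\lag\na\rag^{-s}u\|^2$, and combining with the previous paragraph gives \eqref{lem.pri.0.1} (with an extra factor of $2$ absorbed into $\lambda$).

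There is no real obstacle; the two small points to watch are (i) the uniform comparability of $m(\xi)^{-s}$ and $(1+|\xi|^2)^{-s}$ regardless of whether $s$ is positive, negative, or zero, which is what makes this a single unified estimate covering all $s\in\R$, and (ii) the justification that $H_j(u)$ decays at infinity so that the flux term truly vanishes, which follows from Sobolev embedding once $N$ is sufficiently large.
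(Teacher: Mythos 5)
Your proof is correct and follows essentially the same route as the paper: the zero-order energy identity, Plancherel for the dissipation with the pointwise lower bound $m(\xi)^{-s}\geq\min\{m_0^{-s},m_1^{-s}\}(1+|\xi|^2)^{-s}$ valid for either sign of $s$, and the divergence structure of the flux term (the paper writes $\langle f_j(u)_{x_j},u\rangle=-\int_{\R^n}\partial_{x_j}\bigl(\int_0^u f_j\bigr)dx=0$, which is the same observation as your $u\,\partial_{x_j}f_j(u)=\partial_{x_j}H_j(u)$). Your explicit case split on the sign of $s$ to justify the minimum constant is a clean way to state what the paper leaves implicit.
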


\begin{proof}
As in {\cite[Lemma 2.1]{DFZ}}, the zero-order energy estimate on
\eqref{eq.pri} gives
\begin{equation}\label{lem.pri.0.p1}
    \frac{1}{2}\frac{d}{dt}\|u(t)\|^2+\sum_{j=1}^n\langle f_j(u)_{x_j},
    u\rangle=\langle \De P_s u, u\rangle.
\end{equation}
By Plancherel theorem,
\begin{equation*}
\langle \De P_s u, u\rangle=\langle\widehat{\De P_s
u},\overline{\widehat{u}}
\rangle=-\int_{\R^n}\frac{|\xi|^2}{m(\xi)^s}|\widehat{u}|^2d\xi.
\end{equation*}
Here, using \eqref{m.form},
\begin{multline*}
   -\int_{\R^n}\frac{|\xi|^2}{m(\xi)^s}|\widehat{u}|^2d\xi\leq -\min\left\{\frac{1}{m_0^s},\frac{1}{m_1^s}\right\}
    \int_{\R^n}\frac{|\xi|^2}{(1+|\xi|^2)^s}|\widehat{u}|^2d\xi\\
    =-\min\left\{\frac{1}{m_0^s},\frac{1}{m_1^s}\right\}\|\na\lag \na \rag^{-s} u(t)\|^2.
\end{multline*}
Therefore,
\begin{equation*}
 \langle \De P_s u, u\rangle\leq -\min\left\{\frac{1}{m_0^s},\frac{1}{m_1^s}\right\}\|\na\lag \na \rag^{-s} u(t)\|^2.
\end{equation*}
On the other hand, from integration by part,
\begin{equation*}
 \langle f_j(u)_{x_j},
    u\rangle  =-\int_{\R^n}\left\{\int_0^uf_j(\eta)d\eta\right\}_{x_j}dx=0
\end{equation*}
for each $1\leq j\leq n$. Plugging the above two estimates into \eqref{lem.pri.0.p1}
gives the desired inequality \eqref{lem.pri.0.1}.
\end{proof}

\begin{lemma}\label{lem.pri.k}
Let $k\geq 1$. There are $\la>0$, $C$ such that
\begin{equation}\label{lem.pri.k.1}
    \frac{d}{dt}\sum_{|\al|=k}C^k_\al \|\pa^\al u(t)\|^2+\la
    \|\na^{1+k}\lag \na \rag^{-s}u(t)\|^2\leq C\|\na u(t)\|_{L^\infty}\|\na^k u(t)\|^2
\end{equation}
for any $0\leq t\leq T$, where
$C^k_{\alpha}=\frac{k!}{\alpha!}=\frac{k!}{\alpha_1!\alpha_2!\cdots\alpha_n!}$
for $\alpha=(\alpha_1, \alpha_2, \cdots, \alpha_n)$.
\end{lemma}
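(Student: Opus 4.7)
\textbf{Proof plan for Lemma \ref{lem.pri.k}.} The plan is to differentiate the equation \eqref{eq.pri} by $\pa^\alpha$ with $|\alpha|=k$, take the $L^2$ inner product with $C^k_\alpha \pa^\alpha u$, sum over all multi-indices $|\alpha|=k$, and integrate over $\R^n$. The time-derivative contribution produces exactly $\tfrac{1}{2}\tfrac{d}{dt}\sum_{|\alpha|=k}C^k_\alpha\|\pa^\alpha u(t)\|^2$, so the task reduces to analyzing the dissipative contribution and the convective nonlinearity separately.

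First I would handle the dissipative term. By Plancherel,
\[
\sum_{|\alpha|=k} C^k_\alpha \langle \pa^\alpha \De P_s u,\pa^\alpha u\rangle
= -\int_{\R^n}\frac{|\xi|^2}{m(\xi)^s}\Bigl(\sum_{|\alpha|=k}C^k_\alpha\xi^{2\alpha}\Bigr)|\widehat{u}|^2\,d\xi.
\]
The multinomial identity gives $\sum_{|\alpha|=k}C^k_\alpha\xi^{2\alpha}=|\xi|^{2k}$, so the right-hand side equals $-\int |\xi|^{2(k+1)}m(\xi)^{-s}|\widehat{u}|^2 d\xi$. Using the upper bound $m(\xi)\le m_1(1+|\xi|^2)$ from $(\mathcal{M}_1)$, this is bounded above by $-\la\|\na^{1+k}\langle\na\rangle^{-s}u\|^2$ for some $\la>0$, exactly as in the zero-order argument of Lemma \ref{lem.pri.0}.

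The main technical step is the convection term $J:=\sum_{j=1}^n\sum_{|\alpha|=k}C^k_\alpha\langle \pa^\alpha(f_j(u)_{x_j}),\pa^\alpha u\rangle$. Writing $f_j(u)_{x_j}=f_j'(u)u_{x_j}$ and applying the Leibniz rule, I would split
\[
\pa^\alpha\!\bigl(f_j'(u)u_{x_j}\bigr)=f_j'(u)\,\pa^\alpha u_{x_j}+\bigl[\pa^\alpha, f_j'(u)\bigr]u_{x_j}.
\]
For the leading piece, integration by parts yields
\[
\sum_{j}\langle f_j'(u)\pa^\alpha u_{x_j},\pa^\alpha u\rangle = -\tfrac{1}{2}\sum_j\int_{\R^n}f_j''(u)\,u_{x_j}(\pa^\alpha u)^2\,dx,
\]
which is bounded by $C\|\na u\|_{L^\infty}\|\pa^\alpha u\|^2$ since $\|u\|_{L^\infty}$ is small and hence $f_j''(u)$ is uniformly bounded. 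The commutator remainder is controlled via the Moser-type product/commutator estimate in Lemma \ref{s5.le1} by
\[
\|[\pa^\alpha, f_j'(u)]u_{x_j}\|_{L^2}\le C\bigl(\|\na f_j'(u)\|_{L^\infty}\|\na^k u\|+\|\na^k f_j'(u)\|_{L^2}\|\na u\|_{L^\infty}\bigr),
\]
and both factors of $f_j'(u)$ reduce, via the chain rule and $u$-smallness, to $C\|\na u\|_{L^\infty}\|\na^k u\|$. Thus $|J|\le C\|\na u\|_{L^\infty}\|\na^k u\|^2$. The hardest point is verifying that this Moser estimate really produces the $\|\na u\|_{L^\infty}$ factor rather than $\|u\|_{L^\infty}$; this forces one to peel off one derivative from the smooth coefficient $f_j'(u)$ (or equivalently to combine the commutator estimate with the chain rule), which is precisely what Lemma \ref{s5.le1} is designed for. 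Combining the three contributions and multiplying by $2$ gives \eqref{lem.pri.k.1}.
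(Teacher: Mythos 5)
Your proposal matches the paper's proof essentially step for step: the same Plancherel/multinomial treatment of the dissipative term, the same integration by parts on $\langle f_j'(u)\pa^\alpha u_{x_j},\pa^\alpha u\rangle$, and the same control of the remainder via Gagliardo--Nirenberg (the paper writes out the Leibniz expansion and peels a derivative off $f_j'(u)$ before applying Lemma \ref{s5.le1} and Corollary \ref{s5.co1}, whereas you package this as the Kato--Ponce commutator bound, which is the same machinery). One small wording slip: for $s<0$ the lower bound $m(\xi)\ge m_0(1+|\xi|^2)$, not the upper bound, is what yields the coercivity $|\xi|^{2(k+1)}m(\xi)^{-s}\ge\la|\xi|^{2(k+1)}(1+|\xi|^2)^{-s}$, which is why the paper uses $\min\{m_0^{-s},m_1^{-s}\}$; your reference to the Lemma \ref{lem.pri.0} argument does cover this correctly.
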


\begin{proof}
Take $k\geq 1$. Similarly as in {\cite{DFZ}}, the $k$-order energy
estimate on \eqref{eq.pri} gives
\begin{eqnarray*}
\frac{1}{2}\frac{d}{dt}\sum_{|\al|=k}C_{\al}^{k}\|\pa^\al
u(t)\|^2&+&\sum_{j=1}^n\sum_{|\al|=k}C_{\al}^{k}\langle
\pa_{x_j}\pa^\al f_j(u),\pa^\al u\rangle
=\sum_{|\al|=k}C_{\al}^{k}\langle\De \pa^\al P_su,\pa^\al u\rangle.
\end{eqnarray*}
 Similarly before,
it follows from Plancherel theorem that
\begin{eqnarray*}
\sum_{|\al|=k}C_{\al}^{k}\langle\De \pa^\al P_su,\pa^\al u\rangle
&=&- \sum_{|\al|=k}C_{\al}^{k}\int_{\R^n}
\frac{|\xi|^2}{m(\xi)^s} (i\xi)^\al(\bar{i\xi})^\al
|\widehat{u}|^2d\xi\\
&=&-\int_{\R^n} \frac{|\xi|^{2+2k}}{m(\xi)^s}
|\widehat{u}|^2d\xi,
\end{eqnarray*}
where we used the identity
$(\xi_1^2+\xi_2^2+\cdots+\xi_n^2)^k=\sum\limits_{|\al|=k}C_{\al}^{k}\xi_1^{2\al_1}\xi_2^{2\al_2}\cdots\xi_n^{2\al_n}$.
Then, from \eqref{m.form}, one has
\begin{multline}\label{s3.s.eq3}
\frac{1}{2}\frac{d}{dt}\sum_{|\al|=k}C_{\al}^{k}\|\pa^\al
  u(t)\|^2+\min\left\{\frac{1}{m_0^s},\frac{1}{m_1^s}\right\} \|\na^{1+k}\lag \na \rag^{-s}u(t)\|^2\\
  =\sum_{j=1}^n\sum_{|\al|=k}C_{\al}^{k} I_{j,\al},
\end{multline}
where for $1\leq j\leq n$ and $|\al|=k$, $I_{j,\al}$ is given by
\begin{multline}\label{s3.s.eq4}
I_{j,\al}=\langle \pa_{x_j}\pa^\al f_j(u),-\pa^\al
u\rangle
=\langle f'_j(u)\pa^\al u_{x_j},-\pa^\al u\rangle\\
+\left\langle \sum\limits_{|\beta|=1}^kC_\alpha^\beta\partial^\beta
f'_j(u)\pa^{\al-\beta} u_{x_j},-\pa^\al u\right\rangle
:=I_{j,\al}^1+I_{j,\al}^2.
\end{multline}
It follows from integration by part that
\begin{equation}\label{s3.s.eq5}
I_{j,\al}^1=\frac{1}{2}\int_{\mathbb{R}^n}f''_j(u)u_{x_j}\left(\partial^\alpha
u\right)^2dx\leq C\left\|\nabla
u(t)\right\|_{L^\infty}\left\|\nabla^ku(t)\right\|^2.
\end{equation}
Next, we estimate $ I_{j,\al}^2$ for a fixed $j$ with $1\leq j\leq
n$ and $|\al|=k\geq 1$. In fact, since $|\beta|\geq 1$, the term $\partial^\beta
f'_j(u)\pa^{\al-\beta} u_{x_j}$ can be written as
$$
\partial^{\beta-e_i} \left(f''_j(u)u_{x_i}\right)\pa^{\al-\beta}
u_{x_j}
$$
with $e_i=(0, \cdots, 0, 1, 0, \cdots, 0)$,
$|\beta-e_i|+|\alpha-\beta|=k-1$. Therefore, applying Lemma
\ref{s5.le1} with $p=r=2, q=\infty$ and Corollary \ref{s5.co1}, one has
\begin{eqnarray}\label{s3.s.eq6}
\left\|\partial^{\beta-e_i}
\left(f''_j(u)u_{x_i}\right)\pa^{\al-\beta} u_{x_j}(t)\right\|
&\leq&
C\left\|f''_j(u)u_{x_i}(t)\right\|_{L^\infty}\left\|\nabla^{k-1}u_{x_j}(t)\right\|\nonumber\\
&&+C\left\|u_{x_j}(t)\right\|_{L^\infty}\left\|\nabla^{k-1}\left(f''_j(u)u_{x_i}\right)(t)\right\|\nonumber\\
&\leq&C\left\|\nabla
u(t)\right\|_{L^\infty}\left\|\nabla^ku(t)\right\|.
\end{eqnarray}
Thus, by H\"{o}lder inequality and \eqref{s3.s.eq6},
\begin{eqnarray}\label{s3.s.eq7}
I_{j,\alpha}^2\leq C\left\|\partial^\alpha
u(t)\right\|\left\|\partial^\beta f'_j(u)\pa^{\al-\beta}
u_{x_j}(t)\right\| \leq C\left\|\nabla
u(t)\right\|_{L^\infty}\left\|\nabla^ku(t)\right\|^2.
\end{eqnarray}
Substituting \eqref{s3.s.eq4}, \eqref{s3.s.eq5} and \eqref{s3.s.eq7}
into \eqref{s3.s.eq3} gives the desired estimate \eqref{lem.pri.k.1}.
\end{proof}

\subsection{Existence and time-decay rate for $s\leq 1$}

In this subsection we consider the global existence and time-decay
rates of solutions to the Cauchy problem \eqref{CP.eq} for the case
when $s\leq 1$. The main goal is to prove Theorem \ref{thm.s.small}.

\subsubsection{Global existence}

As the first step, we devote ourselves to the

\medskip

\noindent{\bf Proof of global existence in Theorem
\ref{thm.s.small}:} This follows from the local existence and
uniform-in-time  a priori estimates as well as the continuity
argument. The proof of the local existence is standard, for instance, cf.~\cite{Kato,Kru}, and is thus
omitted for simplicity. It suffices to consider the uniform-in-time
a priori estimates for a smooth solution $u(x,t)$ to the Cauchy
problem \eqref{CP.eq} over $0\leq t\leq T$ for some $0<T\leq
\infty$. From Lemma \ref{lem.pri.0} and Lemma \ref{lem.pri.k}, by
taking the summation of \eqref{lem.pri.0.1} and \eqref{lem.pri.k.1}
with $1\leq k\leq N$, one has
\begin{equation*}
    \frac{d}{dt}\sum_{k=0}^N \sum_{|\al|=k}C_\al^k\|\pa^\al
    u(t)\|^2+\la \sum_{k=0}^N\|\na^{1+k}\lag \na \rag^{-s}u(t)\|^2\leq
    C\|\na u(t)\|_{L^\infty}\|\na u(t)\|_{H^{N-1}}^2.
\end{equation*}
Since
\begin{equation*}
    \sum_{k=0}^N|\na|^{1+k}\lag \na \rag^{-s}=|\na|\lag \na
    \rag^{-s}\sum_{k=0}^N|\na|^k\sim|\na|\lag \na
    \rag^{-s}\lag \na
    \rag^{N}=|\na|\lag\na\rag^{N-s},
\end{equation*}
it further follows that
\begin{equation}\label{thm.s.s.p1}
    \frac{d}{dt}\sum_{k=0}^N \sum_{|\al|=k}C_\al^k\|\pa^\al
    u(t)\|^2+\la\|\na\lag\na\rag^{N-s}u(t)\|^2\leq
    C\|\na u(t)\|_{L^\infty}\|\na u(t)\|_{H^{N-1}}^2
\end{equation}
for any $0\leq t\leq T$. Let us suppose the following smallness a
priori assumption
\begin{equation}\label{thm.s.s.p2}
\sup_{0\leq t\leq T}\|u(t)\|_{H^N}\leq \de
\end{equation}
for a constant $\de>0$ small enough. Due to $N\geq [n/2]+2$,
\begin{equation*}
\sup_{0\leq t\leq T}\|\na u(t)\|_{L^\infty}\leq C\sup_{0\leq t\leq
T}\|u(t)\|_{H^N}\leq C\de.
\end{equation*}
Then, for the case when $s\leq 1$, the right-hand term of
\eqref{thm.s.s.p1} is bounded by
\begin{equation*}
C\de\|\na \lag\na\rag^{N-1}u(t)\|^2\leq C
\de\|\na\lag\na\rag^{N-s}u(t)\|^2,
\end{equation*}
which from \eqref{thm.s.s.p1}, implies
\begin{equation*}
    \frac{d}{dt}\sum_{k=0}^N \sum_{|\al|=k}C_\al^k\|\pa^\al
    u(t)\|^2+\la\|\na\lag\na\rag^{N-s}u(t)\|^2\leq 0
\end{equation*}
since $\de>0$ is small enough. By noticing
\begin{equation*}
\sum_{k=0}^N \sum_{|\al|=k}C_\al^k\|\pa^\al
    u(t)\|^2\sim \|u(t)\|_{H^N}^2,
\end{equation*}
one can take further time integration over $[0,t]$ to obtain
\begin{equation}\label{thm.s.s.p4}
    \|u(t)\|_{H^N}^2+\int_0^t\|\na\lag\na\rag^{N-s}u(\tau)\|^2
    d\tau\leq C\|u_0\|_{H^N}^2
\end{equation}
for any $0\leq t\leq T$. Therefore, in the standard way, as long as
$\|u_0\|_{H^N}$ is sufficiently small, the above uniform-in-time a
priori estimate obtained under the assumption \eqref{thm.s.s.p2}
implies the global existence of solutions by combining the local
existence and uniqueness. This completes the proof of global
existence in Theorem \ref{thm.s.small}.\qed

\subsubsection{Optimal time-decay rates} In order to finish the proof of Theorem
\ref{thm.s.small}, the rest is to prove \eqref{thm.s.small.decay}
for time-decay rates of the obtained solution $u(x,t)$. For that,
define
\begin{equation*}
    \CE_N^{\rm op}(t)=\sup_{0\leq \tau \leq t}\sum_{k=0}^N
    (1+\tau)^{\frac{n}{2}+k}\|\na^ku(\tau)\|^2.
\end{equation*}
One can prove that $ \CE_N^{\rm op}(t)$ is bounded uniformly
in time if $\|u_0\|_{H^N\cap L^1}$ is small enough. In fact, let us
begin with
\begin{equation}\label{thm.s.s.d.p2}
    \frac{d}{dt}\sum_{|\al|=k}C^k_\al \|\pa^\al u(t)\|^2+\la \|\na^{1+k}\lag \na \rag^{-s}
    u(t)\|^2\leq C\chi_{k\geq 1}\|\na u(t)\|_{L^\infty}\|\na^k u(t)\|^2
\end{equation}
for any $t\geq 0$, where $0\leq k\leq N$. This follows from  Lemma
\ref{lem.pri.0} and Lemma \ref{lem.pri.k}. Notice that the
right-hand term of \eqref{thm.s.s.d.p2} vanishes when $k=0$. Let
$a>0$ be a constant to be chosen later. The time-weighted
integration of \eqref{thm.s.s.d.p2} gives
\begin{multline}\label{thm.s.s.d.p3}
(1+t)^{a+k}\|\na^k u(t)\|^2+\int_0^t(1+\tau)^{a+k}\|\na^{1+k}\lag
\na\rag^{-s}u(\tau)\|^2d\tau\\
\leq C\|\na^k
u_0\|^2+C(a+k)\int_0^t(1+\tau)^{a+k-1}\|\na^ku(\tau)\|^2d\tau\\
+C\chi_{k\geq 1} \int_0^t (1+\tau)^{a+k}\|\na
u(\tau)\|_{L^\infty}\|\na^k u(\tau)\|^2d\tau.
\end{multline}
To control the second term on the r.h.s.~of \eqref{thm.s.s.d.p3},
consider
\begin{multline*}
\|\na^k u(t)\|^2=\|\na^{1+k}\lag \na \rag^{-1}u(t)\|^2+\|\na^{k}\lag
\na
\rag^{-1}u(t)\|^2\\
\leq \|\na^{k+1}\lag \na \rag^{-s}u(t)\|^2+\|\na^{k}\lag \na
\rag^{-s}u(t)\|^2
\end{multline*}
since $s\leq 1$. Corresponding to both parts in the sum above, one
need to estimate the following two time integrations:
\begin{eqnarray*}
  I_1&=&C(a+k)\int_0^t (1+\tau)^{a+k-1}\|\na^{k+1}\lag \na
  \rag^{-s}u(\tau)\|^2d\tau,\\
  I_2&=&C(a+k)\int_0^t (1+\tau)^{a+k-1}\|\na^{k}\lag \na
  \rag^{-s}u(\tau)\|^2d\tau.
\end{eqnarray*}
Let $a>1$. $I_1$ is estimated by
\begin{multline*}
    I_1\leq \de \int_0^t (1+\tau)^{a+k}\|\na^{k+1}\lag
    \na\rag^{-s}u(\tau)\|^2d\tau+C_\de \int_0^t\|\na^{k+1}\lag
    \na\rag^{-s}u(\tau)\|^2d\tau\\
\leq \de \int_0^t (1+\tau)^{a+k}\|\na^{k+1}\lag
    \na\rag^{-s}u(\tau)\|^2d\tau+C_\de \|u_0\|_{H^N}^2,
\end{multline*}
where $\de>0$ is an arbitrarily small constant, the Young inequality
with $\frac{a+k-1}{a+k}+\frac{1}{a+k}=1$ was used, and the
inequality
\begin{equation*}
    \sum_{k=0}^N\int_0^t\|\na^{k+1}\lag \na
    \rag^{-s}u(\tau)\|^2d\tau\leq C\|u_0\|_{H^N}^2
\end{equation*}
due to  \eqref{thm.s.s.p4} was also used. For the estimate on $I_2$,
the following interpolation inequality similar to {\cite[Lemma
2.4]{DFZ}} is needed.

\begin{lemma}\label{lem.inter}
For any $k\geq 0$, one has
\begin{equation}\label{lem.inter.1}
\|\na^{k}\lag \na \rag^{-s}u(t)\|\leq C\|\na^{k+1}\lag \na
\rag^{-s}u(t)\|^{\frac{n+2k}{n+2k+2}}\|\hat{u}(t)\|_{L^\infty}^{\frac{2}{n+2k+2}}.
\end{equation}
\end{lemma}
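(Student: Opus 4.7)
The plan is to establish \eqref{lem.inter.1} by a Plancherel-based Nash--Gagliardo--Nirenberg type interpolation, splitting the Fourier integral into low and high frequency parts and then optimizing in the cut-off radius.

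The first step is to rewrite the left-hand side via Plancherel as
$$\|\na^{k}\lag \na \rag^{-s}u\|^2 = \int_{\R^n}|\xi|^{2k}\lag\xi\rag^{-2s}|\hat{u}(\xi)|^2\,d\xi,$$
and to split the domain of integration at a radius $R>0$ to be chosen. On the high frequency piece $|\xi|\geq R$, the pointwise bound $1\leq |\xi|^2/R^2$ immediately yields
$$\int_{|\xi|\geq R}|\xi|^{2k}\lag\xi\rag^{-2s}|\hat{u}|^2\,d\xi \leq R^{-2}\|\na^{k+1}\lag \na \rag^{-s}u\|^2.$$
On the low frequency piece $|\xi|\leq R$, I would combine $|\hat{u}(\xi)|\leq \|\hat{u}\|_{L^\infty}$ with the uniform boundedness of $\lag\xi\rag^{-2s}$ on a fixed small ball, and then evaluate the remaining integral in polar coordinates to obtain
$$\int_{|\xi|\leq R}|\xi|^{2k}\lag\xi\rag^{-2s}|\hat{u}|^2\,d\xi \leq C R^{n+2k}\|\hat{u}\|_{L^\infty}^2.$$
Adding the two bounds gives
$$\|\na^{k}\lag \na \rag^{-s}u\|^2 \leq C R^{n+2k}\|\hat{u}\|_{L^\infty}^2 + R^{-2}\|\na^{k+1}\lag \na \rag^{-s}u\|^2,$$
and the final step is to optimize by balancing the two summands, i.e. choosing $R$ so that $R^{n+2k+2}\sim \|\na^{k+1}\lag \na \rag^{-s}u\|^2/\|\hat{u}\|_{L^\infty}^2$. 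Substituting back and taking a square root produces exactly \eqref{lem.inter.1} with the exponents $(n+2k)/(n+2k+2)$ and $2/(n+2k+2)$.

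The main technical point I would need to watch is the behavior of the weight $\lag\xi\rag^{-2s}$ in the low frequency bound when $s<0$, since then the weight grows with $|\xi|$ and is not controlled by a constant as $R$ becomes large. I would handle this by restricting the cut-off to $R\leq 1$, on which $\lag\xi\rag^{-2s}\leq 2^{|s|}$ uniformly for all $s\leq 1$; in the complementary regime where the balancing value of $R$ would exceed $1$, one has $\|\na^{k+1}\lag\na\rag^{-s}u\|$ dominant over $\|\hat{u}\|_{L^\infty}$, and the desired inequality follows directly by fixing $R=1$ in the combined estimate.
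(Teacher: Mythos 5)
Your core argument --- Plancherel, split the Fourier integral at a radius $R$, bound the low-frequency part by $\|\hat u\|_{L^\infty}^2$ times a volume factor and the high-frequency part by pulling out $R^{-2}$, then optimize in $R$ --- is exactly the paper's proof of Lemma~\ref{lem.inter}, down to the same balancing choice $R=(A/B)^{2/(n+2k+2)}$ with $A=\|\na^{k+1}\lag\na\rag^{-s}u\|$ and $B=\|\hat u\|_{L^\infty}$.

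Your concern about the weight for $s<0$ is well placed, and the paper does pass over it silently: the bound $\int_{|\xi|\leq R}|\xi|^{2k}(1+|\xi|^2)^{-s}\,d\xi\leq CR^{n+2k}$ with $C$ independent of $R$ requires $(1+|\xi|^2)^{-s}$ to stay bounded on $|\xi|\leq R$, which holds for $0\leq s\leq 1$ but fails for $s<0$ once $R$ is large. Your proposed fix, however, does not close the gap. In the regime where $R^*=(A/B)^{2/(n+2k+2)}>1$ you fall back to $R=1$ and get only $\|\na^{k}\lag\na\rag^{-s}u\|^2\leq A^2+CB^2$; but with $\theta=\tfrac{n+2k}{n+2k+2}\in(0,1)$ the target $C'A^{2\theta}B^{2(1-\theta)}$ becomes arbitrarily smaller than $A^2$ as $B/A\to 0$, so $A^2+CB^2\leq C'A^{2\theta}B^{2(1-\theta)}$ fails precisely in the regime you are trying to cover. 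In fact, for $s<0$ the inequality \eqref{lem.inter.1} with a $u$-independent constant is simply false: write $s=-\sigma$, $\sigma>0$, and take $\hat u=\mathbf{1}_{\{M\leq|\xi|\leq 2M\}}$. Then as $M\to\infty$ one has $\|\na^{k}\lag\na\rag^{-s}u\|^2\sim M^{n+2k+2\sigma}$, $A^2\sim M^{n+2k+2+2\sigma}$ and $B=1$, hence
\begin{equation*}
\frac{\|\na^{k}\lag\na\rag^{-s}u\|^2}{A^{2\theta}B^{2(1-\theta)}}\sim M^{\frac{4\sigma}{n+2k+2}}\longrightarrow\infty.
\end{equation*}
So no choice of cut-off can rescue the estimate as written for $s<0$; either the exponent on $A$ must be increased to absorb the growth of the weight, or the interpolation must be set up differently (e.g.\ with the unweighted $\|\na^{k}u\|$ on the left), and the downstream decay argument adjusted accordingly. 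You should flag this rather than assert the fix works.
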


\begin{proof}
Set
\begin{equation*}
    A=\|\na^{k+1}\lag \na \rag^{-s}u(t)\|,\ \
    B=\|\hat{u}(t)\|_{L^\infty}.
\end{equation*}
Then,
\begin{eqnarray*}
 &&\|\na^{k}\lag \na \rag^{-s}u(t)\|_{L^2(\R^n_x)}^2 =
 \int_{\R^n_\xi}\frac{|\xi|^{2k}}{(1+|\xi|^2)^s}|\hat{u}(\xi, t)|^2d\xi\\
&& =\left(\int_{|\xi|\geq R}+\int_{|\xi|\leq R}\right)\frac{|\xi|^{2k}}{(1+|\xi|^2)^s}|\hat{u}(\xi, t)|^2d\xi\nonumber\\
&& \leq \frac{1}{R^2}\int_{|\xi|\geq
 R}\frac{|\xi|^{2k+2}}{(1+|\xi|^2)^s}|\hat{u}(\xi, t)|^2d\xi+\|\hat{u}(t)\|_{L^\infty(\R^n_\xi)}^2\int_{|\xi|\leq R}
 \frac{|\xi|^{2k}}{(1+|\xi|^2)^s}d\xi\nonumber\\
 &&\leq \frac{A^2}{R^2}+CB^2R^{n+2k},\nonumber
\end{eqnarray*}
where $R>0$ can be arbitrary. By taking $R>0$ such that
$\frac{A^2}{R^2}=CB^2R^{n+2k}$, that is
\begin{equation*}
    R=\left(\frac{A}{B}\right)^{\frac{2}{n+2k+2}},
\end{equation*}
it follows that
\begin{equation*}
\|\na^{k}\lag \na \rag^{-s}u(t)\|_{L^2(\R^n_x)}^2\leq
CA^{\frac{2(n+2k)}{n+2k+2}}B^{\frac{4}{n+2k+2}},
\end{equation*}
which is equivalent with \eqref{lem.inter.1}.
\end{proof}

{By the method of \cite{Ito-M3AS,Ito,Kru},} similar to
\cite[Lemma 2.3]{DFZ}, it can be proved that
\begin{equation}\label{bound.L1}
    \|{u}(t)\|_{L^1}\leq \|u_0\|_{L^1}
\end{equation}
for any $t\geq 0$. Now, by plugging \eqref{lem.inter.1} into $I_2$
and using \eqref{bound.L1}, one has
\begin{eqnarray*}
  I_2 &\leq & C \int_0^t(1+\tau)^{a+k-1}\|\na^{k+1}\lag \na
  \rag^{-s}u(\tau)\|^{\frac{2(n+2k)}{n+2k+2}}\|\hat{u}(\tau)\|_{L^\infty}^{\frac{4}{n+2k+2}}d\tau\\
  &\leq &\de \int_0^t(1+\tau)^{a+k}\|\na^{k+1}\lag \na
  \rag^{-s}u(\tau)\|^2d\tau\nonumber\\
  &&+C_\de\int_0^t(1+\tau)^{a+k-\frac{n+2k+2}{2}}\|\hat{u}(\tau)\|_{L^\infty}^2d\tau\nonumber\\
  &\leq &\de \int_0^t(1+\tau)^{a+k}\|\na^{k+1}\lag \na
  \rag^{-s}u(\tau)\|^2d\tau+C_\de
  \|u_0\|_{L^1}^2(1+t)^{a-\frac{n}{2}},\nonumber
\end{eqnarray*}
where  the Young inequality with
$\frac{n+2k}{n+2k+2}+\frac{2}{n+2k+2}=1$ was used and
$a>\frac{n}{2}$ is assumed. Collecting these estimates on $I_1$ and
$I_2$ above, the second term on the r.h.s.~of \eqref{thm.s.s.d.p3}
is bounded by
\begin{multline}\label{thm.s.s.d.p4}
C(a+k)\int_0^t(1+\tau)^{a+k-1}\|\na^ku(\tau)\|^2d\tau \\
\leq 2\de\int_0^t(1+\tau)^{a+k}\|\na^{k+1}\lag \na
  \rag^{-s}u(\tau)\|^2d\tau\\
  +C_\de \|u_0\|_{H^N}^2+C_\de
  \|u_0\|_{L^1}^2(1+t)^{a-\frac{n}{2}}
\end{multline}
for an arbitrarily small constant $\de>0$ and a constant
$a>\max\{1,\frac{n}{2}\}$. Moreover, the third term on the r.h.s.~of
\eqref{thm.s.s.d.p3} is bounded by
\begin{multline}\label{thm.s.s.d.p5}
C\chi_{k\geq 1} \int_0^t (1+\tau)^{a+k}\|\na
u(\tau)\|_{L^\infty}\|\na^k u(\tau)\|^2d\tau\\
\leq C\chi_{k\geq 1} \int_0^t
(1+\tau)^{a+k-\frac{n+1}{2}}\|\na^ku(\tau)\|^2d\tau \sqrt{\CE_N^{\rm
op}(t)}\\
\leq C\chi_{k\geq 1} \int_0^t
(1+\tau)^{a+k-1}\|\na^ku(\tau)\|^2d\tau \sqrt{\CE_N^{\rm op}(t)},
\end{multline}
where $\frac{n+1}{2}\geq 1$ was used and due to $N\geq [n/2]+2$,
\begin{equation*}
    \|\na u(\tau)\|_{L^\infty}\leq C(1+\tau)^{-\frac{n+1}{2}} \sqrt{\CE_N^{\rm op}(t)}
\end{equation*}
for any $0\leq \tau\leq t$. Notice that the time integration term on
the r.h.s.~of \eqref{thm.s.s.d.p5} can again be estimated as in
\eqref{thm.s.s.d.p4}. Thus, under the assumption
\begin{equation}\label{thm.s.s.d.p6}
    \sup_{t\geq 0}\CE_N^{\rm op}(t)\ll 1,
\end{equation}
by plugging \eqref{thm.s.s.d.p4} and \eqref{thm.s.s.d.p5} into
\eqref{thm.s.s.d.p3} and choosing a properly small constant $\de>0$,
one has
\begin{multline*}
(1+t)^{a+k}\|\na^k u(t)\|^2+\int_0^t(1+\tau)^{a+k}\|\na^{1+k}\lag
\na\rag^{-s}u(\tau)\|^2d\tau\\
\leq C\|u_0\|_{H^N}^2+C (1+t)^{a-\frac{n}{2}}\|u_0\|_{L^1}^2
\end{multline*}
for any $t\geq 0$, where $0\leq k\leq N$. Since $a>\frac{n}{2}$, it
follows that
\begin{equation}\label{thm.s.s.d.p7}
    \sup_{t\geq 0}\CE_N^{\rm op}(t)\leq C\|u_0\|_{H^N\cap L^1}^2
\end{equation}
under the assumption \eqref{thm.s.s.d.p6}. Therefore, by the
continuity argument, as long as $\|u_0\|_{H^N\cap L^1}$ is
sufficiently small, \eqref{thm.s.s.d.p7} holds true. Then,
\eqref{thm.s.small.decay} for time-decay rates of the solution
$u(x,t)$ follows by the uniform-in-time boundedness of $\CE_N^{\rm
op}(t)$. This completes the proof of Theorem \ref{thm.s.small}. \qed

\begin{remark}\label{rem.small.L1}
Notice that the right-hand third term of \eqref{thm.s.s.d.p3} can be
neglected  when $k=0$. Then, \eqref{thm.s.s.d.p3} together with
\eqref{thm.s.s.d.p4} imply
\begin{multline*}
(1+t)^{a}\| u(t)\|^2+\int_0^t(1+\tau)^{a}\|\na\lag
\na\rag^{-s}u(\tau)\|^2d\tau\\
\leq C\|
u_0\|^2+C
  \|u_0\|_{L^1}^2(1+t)^{a-\frac{n}{2}},
\end{multline*}
which further yields
\begin{equation*}
 \| u(t)\|\leq C\|u_0\|_{L^2\cap L^1}(1+t)^{-\frac{n}{4}}
\end{equation*}
for any $t\geq 0$. Therefore, for the above time-decay estimate,
the smallness assumption of $\|u_0\|_{L^1}$ can be removed.
This is also consistent with the result in \cite{DFZ} for the case when $s=1$.
\end{remark}


\subsection{Existence and time-decay rate for $s> 1$}

In this subsection, we consider the global existence and time-decay
rates of solutions to the Cauchy problem \eqref{CP.eq} for the case
of $s>1$. The main goal of this subsection is to prove Theorem
\ref{thm.s.large}. Similarly before, this follows from the local existence
and some uniform-in-time a priori estimates with the help of the continuity argument.
Once again, the proof of the local existence is standard and thus omitted for
simplicity.  In what follows, we only consider some  uniform-in-time a priori estimates on
$u(x,t)$ which is supposed to be smooth in $x$, $t$ and satisfy equation
\eqref{eq.pri} over $0\leq t\leq T$ for some $0<T\leq \infty$.

In order to state a priori estimates, let us define three integers $N_0,N$ and $N_1$ in turn
 in terms of $n\geq 1$ and $s>1$ as follows. Notice that $n$
is an integer and $s\in \R$ is a real number and hence might not be an integer. For any integer $k\geq 0$, denote
\begin{eqnarray}
  \ell_1(n,s,k) &=& k+[(\frac{n}{2}+k)(s-1)]_+,\label{def.ll1}\\[2mm]
  \ell_2(n,s,k) &=&k+1+\left\{
  \begin{array}{ll}
    [(\frac{n+3}{2}+k)(s-1)]_+, &\ \ \text{if $n=1$},\\[3mm]
    {[}(\frac{n+1}{2}+k)(s-1)]_+, &\ \ \text{if $n=2$},\\[3mm]
    {[}(\frac{n}{2}+k)(s-1)]_+, &\ \ \text{if $n\geq 3$},
  \end{array}
  \right.\label{def.ll2}\\[2mm]
  \ell_3(s,k) &=& 2+[2(s-1)]+k[s]_+.\label{def.ll3}
\end{eqnarray}
Then, $N_0$ is defined by
\begin{equation}\label{def.N0}
    N_0=\inf\left\{k\in \Z\left|\begin{array}{l}
                       [\frac{k}{[s]_+}]\geq [\frac{n}{2}]+2,\ \
                       k\geq \ell_1(n,s,[\frac{n}{2}]+2),\\[3mm]
                       k\geq \ell_2(n,s,[\frac{n}{2}]+2),\ \text{and}\ \
                       k\geq \ell_3(s,[\frac{n}{2}]+2)
                     \end{array}
    \right.\right\},
\end{equation}
$N$ is arbitrarily chosen such that $N\geq N_0$, and finally $N_1$ is defined by
\begin{equation}\label{def.N1}
    N_1=\sup\left\{k\in \Z\left|\begin{array}{l}
                      [\frac{n}{2}]+2\leq k\leq [\frac{N}{[s]_+}],\ \
                      \ell_1(n,s,k)\leq N,\\[3mm]
                      \ell_2(n,s,k)\leq N,\ \text{and}\ \
                      \ell_3(s,k)\leq N
                     \end{array}
    \right.\right\}.
\end{equation}

Now, let us also define some temporal time-weighted functionals
$\CE_{N}(t)$, $\CD_{N}(t)$, $\CE_{N_1}^{\rm op}(t)$ and $M_i(t)$
$(i=0,1)$ by
\begin{eqnarray}
 \CE_{N}(t)&=& \sum_{k=0}^{[\frac{N}{[s]_+}]}\sup_{0\leq \tau\leq t}(1+\tau)^{k-\frac{1}{2}}\|\na^k u(\tau)\|_{H^{N-k[s]_+}}^2,\label{def.eN}\\
 \CD_N(t)&=&\sum_{k=0}^{[\frac{N}{[s]_+}]+1}\int_0^t (1+\tau)^{k-\frac{3}{2}}\|\na^k \lag\na\rag^{N-k[s]_+}u(\tau)\|d\tau,\nonumber\\
 \CE_{N_1}^{\rm op}(t)&=& \sum_{k=0}^{N_1}\sup_{0\leq \tau\leq t}(1+\tau)^{\frac{n}{2}+k}\|\na^k u(\tau)\|^2,
 \label{def.eN.op}
\end{eqnarray}
and
\begin{equation}\label{def.m01}
    M_i(t)=\sup_{0\leq \tau\leq t} (1+\tau)^{\frac{n+i}{2}}\|\na^i u(\tau)\|_{L^\infty},\ \ i=0,1.
\end{equation}

The key point is to prove that all the above functionals are bounded
uniformly in time if $\|u_0\|_{H^N\cap L^1}$ is small enough. In
fact, the uniform-in-time a priori estimates on these temporal
functionals can be obtained in the following

\begin{lemma}\label{lem.l.pri}
Let $n\geq 1$ and $s>1$. Let $N_0$, $N$ and $N_1$ be defined as before. Then, one has
\begin{equation}\label{lem.l.pri.1}
    \CE_{N}(t)+\CD_N(t)\leq C\|u_0\|_{H^N}^2+CM_1(t)\CD_N(t),
\end{equation}
and
\begin{equation}\label{lem.l.pri.2}
    \CE_{N_1}^{\rm op}(t)\leq C\|u_0\|_{H^N\cap L^1}^2+ C \CE_{N_1}^{\rm op}(t)^2
    +CM_0(t)^2 \CE_{N}(t)
\end{equation}
for any $0\leq t\leq T$.
\end{lemma}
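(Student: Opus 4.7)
\textbf{Proof proposal for Lemma \ref{lem.l.pri}.} The plan is to treat the two estimates separately. For \eqref{lem.l.pri.1} I would use a time-weighted energy method that trades regularity for decay, the signature feature of regularity-loss systems. For \eqref{lem.l.pri.2} I would combine the Duhamel formula with the linear decay estimates of Lemma \ref{lem.decay}(ii), using \eqref{lem.l.pri.1} itself to control the high-regularity part of the nonlinear contribution.

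For \eqref{lem.l.pri.1}, for each integer $0\leq k\leq [N/[s]_+]$ I would sum the estimate of Lemma \ref{lem.pri.k} over derivative orders $k$ through $k+N-k[s]_+$ to obtain
\begin{equation*}
\frac{d}{dt}\|\na^k u\|_{H^{N-k[s]_+}}^2 + \la \|\na^{k+1}\lag\na\rag^{-s}u\|_{H^{N-k[s]_+}}^2
\leq C\|\na u\|_{L^\infty}\|\na^k u\|_{H^{N-k[s]_+}}^2.
\end{equation*}
Multiplying by $(1+t)^{k-1/2}$ and integrating in $\tau$, the nonlinear term becomes $M_1(t)(1+\tau)^{k-(n+2)/2}\|\na^k u\|_{H^{N-k[s]_+}}^2$, which is dominated by $M_1(t)(1+\tau)^{k-3/2}\|\na^k u\|_{H^{N-k[s]_+}}^2$ thanks to $n\geq 1$, hence by $M_1(t)\CD_N(t)$. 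The boundary term $(k-1/2)(1+\tau)^{k-3/2}\|\na^k u\|_{H^{N-k[s]_+}}^2$ coming from differentiation of the weight is handled via a low-high frequency splitting: on low frequencies the dissipation $\|\na^{k+1}\lag\na\rag^{-s}u\|_{H^{N-k[s]_+}}^2$ is equivalent to $\|\na^{k+1} u\|_{H^{N-k[s]_+}}^2$, while on high frequencies the factor $\lag\na\rag^{-s}$ costs $[s]_+$ derivatives but yields control of $\|\na^{k+1}u\|_{H^{N-(k+1)[s]_+}}^2$, which is exactly the $(k+1)$-th term of $\CD_N$. Summing over $k$ telescopes the hierarchy and closes the estimate after absorbing a small fraction of $\CD_N(t)$ into the left-hand side.

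For \eqref{lem.l.pri.2}, I would start from the Duhamel formula
\begin{equation*}
u(t) = e^{\De P_s t}u_0 - \int_0^t e^{\De P_s (t-\tau)}\na\cdot f(u)(\tau)\,d\tau,
\end{equation*}
apply $\na^k$ for $0\leq k\leq N_1$ and take $L^2$ norms. The homogeneous term is controlled by Lemma \ref{lem.decay}(ii) with $(p,r,q)=(1,2,2)$ and $\ell$ chosen so that $\ell/(2(s-1))\geq n/4+k/2$, which is precisely why $N_1$ in \eqref{def.N1} must satisfy $\ell_1(n,s,N_1)\leq N$. For the Duhamel integral I would split $\int_0^{t/2}+\int_{t/2}^t$. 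On $[0,t/2]$ the linear semigroup carries the time decay, so the $L^1\to L^2$ low-frequency estimate reduces matters to $\|\na\cdot f(u)\|_{L^1}\lesssim \|u\|\|\na u\|\lesssim (1+\tau)^{-n/2-1/2}\CE_{N_1}^{\rm op}(t)$ (using $f(0)=f'(0)=0$ so that $\na\cdot f(u)=O(u\na u)$), producing the $\CE_{N_1}^{\rm op}(t)^2$ contribution. On $[t/2,t]$ the nonlinearity must carry the decay, so one needs $\|\na^{k+\ell}(f'(u)\na u)\|$ for $\ell=\ell_2(n,s,k)$ and $\ell=\ell_3(s,k)$; the Moser-type bounds collected in the paper's appendix then produce terms of the form $M_0(t)\|\na^{\cdot}u\|_{H^{\cdot}}$, which are absorbed by $M_0(t)^2\CE_N(t)$ thanks to the non-optimal decay encoded in $\CE_N(t)$.

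The main obstacle is the index bookkeeping. Every high-frequency application of Lemma \ref{lem.decay}(ii) trades $\ell/(2(s-1))$ units of time decay for $[\ell+n(1/r-1/q)]_*$ extra derivatives, and every step up in the hierarchy of \eqref{lem.l.pri.1} trades one unit of decay for $[s]_+$ derivatives. Matching these costs simultaneously across the homogeneous piece, the $[0,t/2]$ piece, and the $[t/2,t]$ piece produces the three conditions $k\geq \ell_1,\ k\geq \ell_2,\ k\geq \ell_3$ which, together with the Sobolev condition $[k/[s]_+]\geq [n/2]+2$, determine $N_0$ and $N_1$ as in \eqref{def.N0}--\eqref{def.N1}. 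Once the indices are set, \eqref{lem.l.pri.1} closes by smallness of $M_1(t)$, and \eqref{lem.l.pri.2} closes via the quadratic $\CE_{N_1}^{\rm op}(t)^2$ and smallness of $M_0(t)$, both to be established by a continuity argument together with local existence.
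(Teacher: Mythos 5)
Your proposal follows essentially the same route as the paper for both estimates: for \eqref{lem.l.pri.1} a time-weighted energy hierarchy over mixed $\nabla^k H^{N-k[s]_+}$ norms with the $k$-th level's dissipation matching the $(k+1)$-th level's weight-derivative boundary term, and for \eqref{lem.l.pri.2} Duhamel's formula split at $t/2$ combined with Lemma \ref{lem.decay}(ii), where the three choices of $\ell$ force exactly the constraints $\ell_1,\ell_2,\ell_3\leq N$ built into $N_0,N_1$. One small correction: the boundary term $\int_0^t(1+\tau)^{k-3/2}\|\nabla^k u\|_{H^{N-k[s]_+}}^2\,d\tau$ is not absorbed by a low-high frequency split within level $k$ (at low frequencies $\|\nabla^{k+1}u\|$ cannot dominate $\|\nabla^k u\|$); rather the paper proceeds by induction on $k$, using that the dissipation output at level $k-1$, carrying weight $(1+\tau)^{(k-1)-1/2}=(1+\tau)^{k-3/2}$, is precisely this boundary term, with the base case $k=0$ free because the weight $(1+t)^{-1/2}$ has negative exponent and the weight-derivative term then enters with a favorable sign.
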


\begin{proof}
We first prove \eqref{lem.l.pri.1}. It is equivalent to prove that for any $0\leq t\leq T$,
\begin{multline}\label{lem.l.pri.p1}
(1+t)^{k-\frac{1}{2}}\left\|\na^k u(t)\right\|_{H^{N-k[s]_+}}^2
+\int_0^t(1+\tau)^{k-\frac{3}{2}}\left\|\nabla^{k}\lag\na\rag^{N-k[s]_+}u(\tau)\right\|^2 d\tau\\
+\int_0^t(1+\tau)^{k-\frac{1}{2}}\left\|\nabla^{k+1}\lag\na\rag^{N-(k+1)[s]_+}u(\tau)\right\|^2 d\tau\\
\leq C\|u_0\|_{H^N}^2+ CM_1(t)\CD_N(t)
\end{multline}
for all $0\leq k\leq [\frac{N}{[s]_+}]$. This can be done by induction on $k$. In fact, similar to obtain \eqref{thm.s.s.p1} from \eqref{eq.pri}
for the case of $s\leq 1$,
it also holds true for the case of $s>1$ that
\begin{equation*}
    \frac{d}{dt}\sum_{k=0}^N \sum_{|\al|=k}C_\al^k\|\pa^\al
    u(t)\|^2+\la\|\na\lag\na\rag^{N-s}u(t)\|^2\leq
    C\|\na u(t)\|_{L^\infty}\|\na u(t)\|_{H^{N-1}}^2.
\end{equation*}
Multiplying the above inequality by
$(1+t)^{-\frac{1}{2}}$ and then taking integration in $t$
gives
\begin{multline*}
(1+t)^{-\frac{1}{2}}\left\|u(t)\right\|_{H^N}^2
+\int_0^t(1+\tau)^{-\frac{3}{2}}\|u(\tau)\|_{H^N}^2d\tau\\
+\int_0^t(1+\tau)^{-\frac{1}{2}}\|\na\lag \na \rag^{N-s} u(\tau)\|^2d\tau\\
\leq C\left\|u_0\right\|_{H^N}^2+
C\int_0^t(1+\tau)^{-\frac{1}{2}}\left\|\nabla
u(\tau)\right\|_{L^\infty}\left\|\nabla
u(\tau)\right\|_{H^{N-1}}^2d\tau,
\end{multline*}
where by using the definitions of $M_1(t), \CD_N(t)$ and the fact that $n\geq 1$,
the last time-integration term is bounded by
\begin{multline*}
C\int_0^t(1+\tau)^{-\frac{1}{2}}\left\|\nabla
u(\tau)\right\|_{L^\infty}\left\|\nabla
u(\tau)\right\|_{H^{N-1}}^2d\tau\\
\leq
CM_1(t)\int_0^t(1+\tau)^{-\frac{n+2}{2}}\left\|
u(\tau)\right\|_{H^N}^2d\tau\\
\leq C\left\|u_0\right\|_{H^N}^2+ CM_1(t)\CD_N(t).
\end{multline*}
Therefore, \eqref{lem.l.pri.p1} with $k=0$ follows  due to $s\leq [s]_+$. Next, suppose that \eqref{lem.l.pri.p1}
is true for $k-1$ with $1\leq k\leq [\frac{N}{[s]_+}]$. From \eqref{thm.s.s.d.p2} which actually also holds for the case of $s>1$, after taking summation from $k$ to $k+N-k[s]_+$, one has
\begin{multline}\label{lem.l.pri.p2}
\frac{d}{dt}\sum_{i=k}^{k+N-k[s]_+}\sum_{|\al|=i}C^i_\al \|\pa^\al
u(t)\|^2
+\la \sum_{i=k}^{k+N-k[s]_+}\|\na^{1+i}\lag \na\rag^{-s}u(t)\|^2\\
\leq C\|\na u(t)\|_{L^\infty}\sum_{i=k}^{k+N-k[s]_+}\|\na^i
u(t)\|^2.
\end{multline}
Notice
\begin{equation*}
   \sum_{i=k}^{k+N-k[s]_+}|\na|^{1+i}\lag \na\rag^{-s}=|\na|^{k+1} \lag \na\rag^{-s}
   \sum_{i=0}^{N-k[s]_+}|\na|^i\sim|\na|^{k+1}\lag \na\rag^{N-k[s]_+-s}
\end{equation*}
which due to $s\leq [s]_+$, implies
\begin{equation*}
 \sum_{i=k}^{k+N-k[s]_+}\|\na^{1+i}\lag \na\rag^{-s}u(t)\|^2
 \geq \la \|\na^{k+1}\lag \na\rag^{N-(k+1)[s]_+}u(t)\|^2.
\end{equation*}
Hence,  multiplying \eqref{lem.l.pri.p2} by $(1+t)^{k-\frac{1}{2}}$ and then taking integration in $t$
yields
\begin{multline}\label{lem.l.pri.p3}
(1+t)^{k-\frac{1}{2}}\left\|\nabla^k
u(t)\right\|_{H^{N-k[s]_+}}^2+\int_0^t(1+\tau)^{k-\frac{1}{2}}
\|\na^{k+1}\lag\na\rag^{N-(k+1)[s]_+}u(\tau)\|^2d\tau\\
\leq C\left\|\nabla^k
u_0\right\|_{H^{N-k[s]_+}}^2+C\int_0^t(1+\tau)^{k-\frac{3}{2}}\left\|\nabla^k
\lag\na\rag^{N-k[s]_+}u(\tau)\right\|^2d\tau\\
+ C\int_0^t(1+\tau)^{k-\frac{1}{2}}\left\|\nabla
u(\tau)\right\|_{L^\infty}\left\|\nabla^k
\lag\na\rag^{N-k[s]_+}u(\tau)\right\|^2d\tau.
\end{multline}
Here, the third term on the r.h.s.~of \eqref{lem.l.pri.p3} is bounded by
\begin{multline*}
 CM_1(t)\int_0^t(1+\tau)^{k-\frac{n+2}{2}}\left\|\nabla^k
\lag\na\rag^{N-k[s]_+}u(\tau)\right\|^2d\tau\\
\leq CM_1(t)\int_0^t(1+\tau)^{k-\frac{3}{2}}\left\|\nabla^k
\lag\na\rag^{N-k[s]_+}u(\tau)\right\|^2d\tau\\
\leq  CM_1(t)\CD_N(t),
\end{multline*}
again from using the definitions of $M_1(t), \CD_N(t)$ and the fact that $n\geq 1$, while for the right-hand
second term of \eqref{lem.l.pri.p3}, by induction assumption for $k-1$, it is bounded by
\begin{equation*}
C\|u_0\|_{H^N}^2+ CM_1(t)\CD_N(t).
\end{equation*}
Therefore, from \eqref{lem.l.pri.p3} as well as the induction
assumption for $k-1$, \eqref{lem.l.pri.p1} is also true for $k$.
Then, by induction on $k$, \eqref{lem.l.pri.p1} holds for all $0\leq
k\leq [\frac{N}{[s]_+}]$. This proves \eqref{lem.l.pri.1}.

Next, to prove \eqref{lem.l.pri.2}, we rewrite \eqref{CP.eq} as a mild form by Duhamel's
principle,
\begin{equation}\label{opt.eq3}
u(x,t)={e}^{\De P_s t}u_0-\sum\limits_{j=1}^n\int_0^t{e}^{\De P_s
(t-\tau)}(g_j(u)_{x_j})(\tau)d\tau.
\end{equation}
Here, $g_j(u)=f_j(u)-f_j(0)-f'_j(0)u$ is set for $1\leq j\leq n$.
Notice that for each $j$, $g_j(u)=O(u^2)$ by the assumption
\eqref{s4.assu.f}. In what follows, fix an integer $k$ with $0\leq
k\leq N_1$. Applying $\nabla^k$ to \eqref{opt.eq3} and taking $L^2$
norm, one has
\begin{multline}\label{opt.eq4}
\left\|\nabla^ku(t)\right\| \leq\displaystyle C\left\|\nabla^k{\rm
e}^{\De P_s
t}u_0\right\|+C\sum_{j=1}^n\int_0^\frac{t}{2}\left\|\nabla^{k+1}{
e}^{\De P_s(t-\tau)}g_j(u)(\tau)\right\|d\tau\\
\displaystyle+C\sum_{j=1}^n\int_\frac{t}{2}^t\left\|\nabla {e}^{\De
P_s(t-\tau)}\nabla^k g_j(u)(\tau)\right\|d\tau :=I_3+I_4+I_5.
\end{multline}
For $I_3$, by applying \eqref{lem.decay.3} with $p=1,r=q=2$ and $\frac{\ell}{2(s-1)}=\frac{n}{4}+\frac{k}{2}$,
it follows that
\begin{equation}\label{opt.eq5}
I_3\leq
C(1+t)^{-\frac{n}{4}-\frac{k}{2}}(\|u_0\|_{L^1}+\|\na^{\ell_1(n,s,k)}u_0\|)\leq
C(1+t)^{-\frac{n}{4}-\frac{k}{2}}\|u_0\|_{H^N\cap L^1},
\end{equation}
where the definitions \eqref{def.ll1} and \eqref{def.N1} for $\ell_1(n,s,k)$ and $N_1$ were used.
For $I_4$, one can apply \eqref{lem.decay.3} with $k$ replaced
by $k+1$ and with  $p=1,r=q=2$ and $\frac{\ell}{2(s-1)}=\frac{n+\ga(n)}{4}+\frac{k}{2}$,
where $\ga(n)=3$ for $n=1$, $1$ for $n=2$ and $0$ for $n\geq 3$, so that it follows
\begin{multline}\label{opt.eq6}
I_4 \leq\displaystyle
C\int_0^\frac{t}{2}(1+t-\tau)^{-\frac{n}{4}-\frac{k+1}{2}}\left\|g(u)(\tau)\right\|_{L^1}d\tau\\
+C\int_0^{\frac{t}{2}}(1+t-\tau)^{-\frac{n+\ga(n)}{4}-\frac{k}{2}}
\|\na^{k+1+[(\frac{n+\ga(n)}{2}+k)(s-1)]_+}g(u)(\tau)\|d\tau\\
=I_{4,1}+I_{4,2}.
\end{multline}
Here and hereafter we used $g(u)$ to denote $g_j(u)$ for $1\leq j\leq n$ without loss of generality.
For the term $I_{4,1}$, it is easy to see
\begin{equation*}
\left\|g(u)(\tau)\right\|_{L^1}\leq C\left\|u(\tau)\right\|^2\leq
C\CE_{N_1}^{\rm op}(t)(1+\tau)^{-\frac{n}{2}},
\end{equation*}
which implies
\begin{multline}\label{opt.eq8}
I_{4,1}\leq\displaystyle
C \CE_{N_1}^{\rm op}(t)\int_0^\frac{t}{2}(1+t-\tau)^{-\frac{n}{4}-\frac{k+1}{2}}(1+\tau)^{-\frac{n}{2}}d\tau\\
\leq\displaystyle C \CE_{N_1}^{\rm op}(t)\times \left\{
\begin{array}{ll}
(1+t)^{-\frac{1}{4}-\frac{k}{2}}, & (n=1)\\[3mm]
(1+t)^{-1-\frac{k}{2}}\ln(1+t), \ \ & (n=2)\\[3mm]
(1+t)^{-\frac{n}{4}-\frac{k+1}{2}}, & (n\geq 3)
\end{array}
\right\} \leq C \CE_{N_1}^{\rm
op}(t)(1+t)^{-\frac{n}{4}-\frac{k}{2}}
\end{multline}
{}for all $n\geq 1$.

On the other hand, for the term $I_{4,2}$, by applying Lemma
\ref{s5.le1}, one has
\begin{eqnarray}\label{opt.eq9}
I_{4,2}&\leq&
C\int_0^{\frac{t}{2}}(1+t-\tau)^{-\frac{n+\ga(n)}{4}-\frac{k}{2}}
\|u(\tau)\|_{L^\infty}\|\na^{\ell_2(n,s,k)}u(\tau)\|d\tau\\
&\leq &CM_0(t)(1+t)^{-\frac{n+\ga(n)}{4}-\frac{k}{2}}\int_0^{\frac{t}{2}}(1+\tau)^{-\frac{n}{2}}\|u(\tau)\|_{H^N}d\tau\nonumber\\
&\leq &CM_0(t)\sqrt{\CE_{N}(t)}(1+t)^{-\frac{n+\ga(n)}{4}-\frac{k}{2}}
\int_0^{\frac{t}{2}}(1+\tau)^{-\frac{n}{2}+\frac{1}{4}}d\tau\nonumber\\
&\leq &
CM_0(t)\sqrt{\CE_{N}(t)}(1+t)^{-\frac{n}{4}-\frac{k}{2}},\nonumber
\end{eqnarray}
where the definitions \eqref{def.m01}, \eqref{def.eN} and \eqref{def.ll2} for $M_0(t)$, $\CE_{N}(t)$
and $\ell_2(n,s,k)$ were used and we also used
\begin{equation}\label{opt.eq9.0}
  \int_0^{\frac{t}{2}}(1+\tau)^{-\frac{n}{2}+\frac{1}{4}}d\tau\leq C (1+t)^{\frac{\ga(n)}{4}}
\end{equation}
for all $n\geq 1$. Then, combining \eqref{opt.eq8}, \eqref{opt.eq9}
with \eqref{opt.eq6} gives
\begin{equation}\label{opt.eq9.1}
    I_4\leq C (1+t)^{-\frac{n}{4}-\frac{k}{2}}\left[\CE_{N_1}^{\rm op}(t)
    + M_0(t)\sqrt{\CE_{N}(t)}\right].
\end{equation}
{}For the term $I_5$, one can apply \eqref{lem.decay.3} with $p=1$,
$r=q=2$ and $\frac{\ell}{2(s-1)}=1+\de$ for a constant $\de>0$ small
enough to be chosen later, so that
\begin{multline}\label{opt.eq10}
I_5 \leq\displaystyle
C\int_\frac{t}{2}^t(1+t-\tau)^{-\frac{n}{4}-\frac{1}{2}}
\left\|\nabla^k g(u)(\tau)\right\|_{L^1}d\tau\\
+C\int_\frac{t}{2}^t(1+t-\tau)^{-(1+\de)}\|\na^{1+k+[2(1+\de)(s-1)]_+}g(u)(\tau)\|d\tau
:=\displaystyle I_{5,1}+I_{5,2}.
\end{multline}
Here, for $I_{5,1}$, since
\begin{equation*}
\left\|\nabla^k g(u)(\tau)\right\|_{L^1} \leq
C\|u(\tau)\|\left\|\nabla^ku(\tau)\right\|
\end{equation*}
by Lemma \ref{s5.le1}, one has
\begin{multline}\label{opt.eq11}
I_{5,1} \leq\displaystyle C
\CE_{N_1}^{\rm op}(t)\int_\frac{t}{2}^t(1+t-\tau)^{-\frac{n}{4}-\frac{1}{2}}
(1+\tau)^{-\frac{n}{2}-\frac{k}{2}}d\tau\\
\leq\displaystyle C\CE_{N_1}^{\rm op}(t)\times\left\{
\begin{array}{ll}
(1+t)^{-\frac{1}{4}-\frac{k}{2}}, & (n=1)\\[3mm]
(1+t)^{-1-\frac{k}{2}}\ln(1+t), \ \ & (n=2)\\[3mm]
(1+t)^{-\frac{n}{2}-\frac{k}{2}}, & (n\geq 3)
\end{array}
\right\}
\leq C\CE_{N_1}^{\rm op}(t)(1+t)^{-\frac{n}{4}-\frac{k}{2}}
\end{multline}
{}for all $n\geq 1$.

To estimate $I_{5,2}$, notice that from the definitions
\eqref{def.ll3} and \eqref{def.N1} of $\ell_3(s,k)$ and $N_1$, one
can take $\de>0$ small enough such that
\begin{equation*}
    1+[2(1+\de)(s-1)]_++k[s]_+\leq\ell_3(s,k)\leq N
\end{equation*}
for any $0\leq k\leq N_1$. Then, it follows from Lemma \ref{s5.le1}
that
\begin{multline*}
\|\na^{1+k+[2(1+\de)(s-1)]_+}g(u)(\tau)\|\leq
C\|u(\tau)\|_{L^\infty}\|\na^{1+k+[2(1+\de)(s-1)]_+}u(\tau)\|\\
\leq CM_0(t)(1+\tau)^{-\frac{n}{2}}\|\na^k
u(\tau)\|_{H^{N-k[s]_+}}\\
\leq
CM_0(t)\sqrt{\CE_{N}(t)}(1+\tau)^{-\frac{n}{2}-\frac{k}{2}+\frac{1}{4}}
\end{multline*}
for any $0\leq \tau\leq t$. This further implies
\begin{multline*}
I_{5,2}\leq
CM_0(t)\sqrt{\CE_{N}(t)}\int_{\frac{t}{2}}^t(1+t-\tau)^{-(1+\de)}
(1+\tau)^{-\frac{n}{2}-\frac{k}{2}+\frac{1}{4}}d\tau\\
\leq
CM_0(t)\sqrt{\CE_{N}(t)}(1+t)^{-\frac{n}{4}-\frac{k}{2}-\frac{n-1}{4}}
\int_0^{\frac{t}{2}}(1+\tau)^{-(1+\de)}d\tau\\
\leq CM_0(t)\sqrt{\CE_{N}(t)}(1+t)^{-\frac{n}{4}-\frac{k}{2}},
\end{multline*}
where $n\geq 1$ was used. By plugging estimates on $I_{5,1}$ and
$I_{5,2}$ into \eqref{opt.eq10}, one has
\begin{equation}\label{opt.eq11.1}
    I_5\leq
    C\left[\CE_{N_1}^{\rm op}(t)+M_0(t)\sqrt{\CE_{N}(t)}\right](1+t)^{-\frac{n}{4}-\frac{k}{2}}.
\end{equation}
Thus, \eqref{opt.eq4} together with \eqref{opt.eq5},
\eqref{opt.eq9.1} and \eqref{opt.eq11.1} yields that for any $0\leq
t\leq T$,
\begin{equation*}
(1+t)^{\frac{n}{4}+\frac{k}{2}}\|\na^ku(t)\|\leq C\|u_0\|_{H^N\cap
L^1}+C\CE_{N_1}^{\rm op}(t)+CM_0(t)\sqrt{\CE_{N}(t)}
\end{equation*}
with $0\leq k\leq N_1$. Since the right-hand term of the above
estimate is nondecreasing in $t$, it further holds that
\begin{equation*}
\sqrt{\CE_{N_1}^{\rm op}(t)}\leq C\|u_0\|_{H^N\cap
L^1}+C\CE_{N_1}^{\rm op}(t)+CM_0(t)\sqrt{\CE_{N}(t)}
\end{equation*}
for any $0\leq t\leq T$. This proves \eqref{lem.l.pri.2} and hence
completes the proof of Lemma \ref{lem.l.pri}.
\end{proof}

\noindent{\bf Proof of Theorem \ref{thm.s.large}:} As mentioned at
the beginning of this subsection, it suffices to consider the
uniform-in-time a priori estimates on the smooth solution $u(x,t)$
to the Cauchy problem \eqref{CP.eq} over $0\leq t\leq T$ for
$0<T\leq \infty$. Due to the definition \eqref{def.N1} of $N_1$,
$N_1\geq [\frac{n}{2}]+2$ holds true. By using the Sobolev
inequality as in {\cite[Proposition 3.8]{Ta}}, it follows
that for any $0\leq t\leq T$,
\begin{equation*}
    M_0(t)+M_1(t)\leq C \sqrt{\CE_{N_1}^{\rm op}(t)}.
\end{equation*}
Then, from Lemma \ref{lem.l.pri}, \eqref{lem.l.pri.1} and \eqref{lem.l.pri.2} imply
\begin{equation*}
    \CE_{N}(t)+\CD_N(t)\leq C\|u_0\|_{H^N}^2+C\sqrt{\CE_{N_1}^{\rm op}(t)}\CD_N(t),
\end{equation*}
and
\begin{equation*}
    \CE_{N_1}^{\rm op}(t)\leq C\|u_0\|_{H^N\cap L^1}^2+ C \CE_{N_1}^{\rm op}(t)^2
    +C\CE_{N_1}^{\rm op}(t) \CE_{N}(t),
\end{equation*}
respectively. By setting
\begin{equation*}
    X(t)= \CE_{N}(t)+\CD_N(t)+ \CE_{N_1}^{\rm op}(t),
\end{equation*}
the above two inequalities further lead to
\begin{equation*}
    X(t)\leq C\left(\|u_0\|_{H^N\cap L^1}^2+X(t)^{\frac{3}{2}}+X(t)^2\right)
\end{equation*}
for any $0\leq t\leq T$. From the continuity argument, it is easy to
see that $X(t)$ is bounded uniformly in time under the assumption
that $\|u_0\|_{H^N\cap L^1}$ is small enough. Therefore, the global
existence of solutions as in \eqref{thm.s.large.1} follows  by the
standard way and also the optimal time-decay estimate
\eqref{thm.s.large.2} results from the definition \eqref{def.eN.op}
of $\CE_{N_1}^{\rm op}(t)$. The proof of Theorem \ref{thm.s.large}
is complete.\qed

\medskip

Up to now, we have obtained the existence and optimal decay
rates of the global solutions to the Cauchy problem \eqref{CP.eq}
for all spatial dimensions $n\geq 1$ and for all $s\in\mathbb{R}$.

\section{Large-time asymptotic behavior}\label{s4}

In this section  we shall prove Theorem \ref{thm.asymptotic} on the
large-time behavior of the obtained solutions. For that purpose, we
divide the proof by several steps in order to time-asymptotically approximate the solution to the Cauchy
problem \eqref{CP.eq}.

First of all, we prove that the solution to the nonlinear Cauchy
problem \eqref{CP.eq} can be approximated by the one to the
corresponding linearized problem at infinite time. For given
$u_0=u_0(x)$, let us define $\tilde{u}={e}^{\De P_st}u_0$ to be the
solution to the linearized Cauchy problem corresponding to
\eqref{CP.eq} by
\begin{equation*}
\left\{\begin{array}{l}
 \dis  \pa_t \tilde{u}-\De P_s\tilde{u}=0,\ \  x\in \R^n, t>0,\\[3mm]
\dis \tilde{u}|_{t=0}=u_0,\ \  x\in \R^n.
\end{array}\right.
\end{equation*}
Then, one has the following two lemmas which correspond to the case
when  $s\leq 1$ and $s>1$, respectively.

\begin{lemma}[case for $s\leq1$]\label{asy.small.lem}
Let $n\geq 2$, $s\leq 1$, and $N\geq \left[\frac{n}{2}\right]+2$.
Suppose that $\|u_0\|_{H^N\cap L^1}$ is sufficiently small, and
$u(x,t)$ is a solution to the Cauchy problem \eqref{CP.eq} obtained
in Theorem \ref{thm.s.small}. Then, for any $t\geq 0$,
\begin{equation}\label{asy.small.dec}
\left\|\nabla^k\left(u-{e}^{\De P_st}u_0\right)(t)\right\|\leq
C\rho(t)(1+t)^{-\frac{n}{4}-\frac{k+1}{2}},
\end{equation}
where $0\leq k\leq N$, and $\rho(t)=\ln(1+t)$ for $n=2$ and
$\rho(t)=1$ for $n\geq 3$.
\end{lemma}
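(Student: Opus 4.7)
The plan is to run the standard Duhamel/semigroup argument, writing the difference $u-e^{\De P_s t}u_0$ as a time-convolution of the linear semigroup $e^{\De P_s t}$ against the quadratic nonlinearity, and then exploit the linear $L^p$--$L^q$ estimate of Lemma \ref{lem.decay}(i) together with the already-proved $L^2$ time-decay of $u$ from Theorem \ref{thm.s.small}.

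By Duhamel's principle, with $g_j(u)=f_j(u)-f_j'(0)u=O(u^2)$,
\begin{equation*}
    u(t)-e^{\De P_s t}u_0
    =-\sum_{j=1}^n\int_0^t e^{\De P_s(t-\tau)}\pa_{x_j}g_j(u)(\tau)\,d\tau,
\end{equation*}
so applying $\na^k$ and taking the $L^2$-norm,
\begin{equation*}
    \|\na^k(u-e^{\De P_s t}u_0)(t)\|
    \leq C\sum_{j=1}^n\left(\int_0^{t/2}+\int_{t/2}^t\right)\|e^{\De P_s(t-\tau)}\na^{k+1}g_j(u)(\tau)\|\,d\tau
    :=J_1+J_2.
\end{equation*}
For $J_1$, I put the derivative $\na^{k+1}$ entirely on the semigroup and invoke Lemma \ref{lem.decay}(i) with $p=1,r=q=2$: the low-frequency piece yields a factor $(1+t-\tau)^{-n/4-(k+1)/2}\|g_j(u)(\tau)\|_{L^1}$, while the high-frequency piece contributes $e^{-\la(t-\tau)}\|\na^{k+1}g_j(u)(\tau)\|$. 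For $J_2$, I move $\na^k$ onto $g_j(u)$ and apply Lemma \ref{lem.decay}(i) with one derivative on the semigroup, so I need $(1+t-\tau)^{-n/4-1/2}\|\na^k g_j(u)(\tau)\|_{L^1}$ plus its exponential high-frequency counterpart.

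The inputs for these bounds come from Theorem \ref{thm.s.small}: $\|u(\tau)\|\lesssim(1+\tau)^{-n/4}$ gives $\|g_j(u)(\tau)\|_{L^1}\lesssim \|u(\tau)\|^2\lesssim(1+\tau)^{-n/2}$, and (via Lemma \ref{s5.le1}) $\|\na^k g_j(u)(\tau)\|_{L^1}\lesssim \|u(\tau)\|\,\|\na^k u(\tau)\|\lesssim(1+\tau)^{-n/2-k/2}$, with analogous $L^2$ bounds on $\na^{k+1}g_j(u)$ from the Moser estimates and Theorem \ref{thm.s.small} (note $k+1\leq N+1$, but since we only ever pair $\na^{k+1}g_j(u)$ with an exponentially decaying kernel, a uniform-in-time $L^2$ bound on $\na^{k+1}g_j(u)$ at the cost of one additional derivative on $u$ suffices, and this is available because $k\leq N$ and we can trade against the semigroup by writing $\na^{k+1}=\na\cdot\na^k$ after an integration by parts when $k=N$). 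Plugging these into $J_1$ gives
\begin{equation*}
    J_1\lesssim \int_0^{t/2}(1+t-\tau)^{-\frac{n}{4}-\frac{k+1}{2}}(1+\tau)^{-n/2}\,d\tau
    \lesssim \rho(t)(1+t)^{-\frac{n}{4}-\frac{k+1}{2}},
\end{equation*}
since on $[0,t/2]$ the factor $(1+t-\tau)$ is equivalent to $(1+t)$, and $\int_0^{t/2}(1+\tau)^{-n/2}d\tau$ is bounded for $n\geq 3$ but produces $\ln(1+t)$ for $n=2$. For $J_2$, $(1+\tau)$ is equivalent to $(1+t)$, so
\begin{equation*}
    J_2\lesssim (1+t)^{-\frac{n}{2}-\frac{k}{2}}\int_{t/2}^t(1+t-\tau)^{-\frac{n}{4}-\frac{1}{2}}d\tau
    \lesssim \rho(t)(1+t)^{-\frac{n}{4}-\frac{k+1}{2}},
\end{equation*}
again with the same dichotomy on $n$. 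The high-frequency exponential contributions are absorbed: on $[0,t/2]$ the factor $e^{-\la(t-\tau)/2}$ crushes everything, while on $[t/2,t]$ convolving $e^{-\la(t-\tau)}$ against $(1+\tau)^{-\alpha}\sim (1+t)^{-\alpha}$ yields at worst the target rate.

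The main obstacle I anticipate is the borderline case $n=2$: both time integrals produce a logarithm, and one has to check that the exponent on $(1+t)$ that multiplies $\ln(1+t)$ is exactly $-1-(k+1)/2$, i.e.~one must keep the split at $t/2$ (so $1+t-\tau\sim 1+t$ on $[0,t/2]$ and $1+\tau\sim 1+t$ on $[t/2,t]$) rather than splitting elsewhere, and one must use the $L^1$--$L^2$ decay on the low-frequency side (rather than $L^2$--$L^2$, which would only give rate $(1+t)^{-k/2}$). A secondary technical point is verifying that the Moser-type bound $\|\na^k g_j(u)\|_{L^1}\lesssim\|u\|\|\na^k u\|$ really holds for $0\leq k\leq N$ without losing regularity; this uses $g_j(u)=O(u^2)$ together with Lemma \ref{s5.le1} and the smallness of $\|u\|_{H^N}$.
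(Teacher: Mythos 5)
Your proposal follows the paper's proof of Lemma \ref{asy.small.lem} almost verbatim: same Duhamel splitting at $t/2$, same use of Lemma \ref{lem.decay}(i) with $p=1$, $r=q=2$, and the same bilinear Moser bounds $\|g(u)\|_{L^1}\lesssim\|u\|^2$, $\|\na^k g(u)\|_{L^1}\lesssim\|u\|\|\na^k u\|$, $\|\na^{k+1}g(u)\|\lesssim\|u\|_{L^\infty}\|\na^{k+1}u\|$. The computations of the time integrals and the origin of the logarithm at $n=2$ also coincide with the paper's.

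There is, however, one point where your patch does not actually repair anything. You correctly notice that the exponential high-frequency term in Lemma \ref{lem.decay}(i) produces $e^{-\la(t-\tau)}\|\na^{k+1}g(u)(\tau)\|$, which for $k=N$ requires $\na^{N+1}u$ and is out of range of Theorem \ref{thm.s.small}. You then propose to ``trade against the semigroup by writing $\na^{k+1}=\na\cdot\na^k$ after an integration by parts.'' This cannot help: with $r=q=2$ the high-frequency remainder in \eqref{lem.decay.2} is always $e^{-\la t}\|\na^{j}v\|$ where $j$ equals the \emph{total} number of spatial derivatives acting on $e^{\De P_s t}v$, regardless of how you distribute them between semigroup and data; applying $\na$ to the semigroup and $\na^k$ to the data still returns $\|\na^{k+1}v\|$ in the high-frequency part. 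There is no genuine regularity gain available here for $s$ close to $1$ (and none at all for $s=1$), so the endpoint $k=N$ is not recoverable by this device. The paper's own proof has the identical issue — its estimate \eqref{asy.sm.prf.eq4} also invokes $\|\na^{k+1}u(\tau)\|$ — and this is the reason the proof of Theorem \ref{thm.asymptotic} in Section \ref{s4} actually only claims \eqref{thm,asy.large.dec} for $0\leq k\leq N-1$ when $s\leq 1$, despite the lemma statement reading $0\le k\le N$. In short, your argument is correct (and coincides with the paper's) for $0\le k\le N-1$; the $k=N$ case should simply be dropped from the claimed range rather than patched.
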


\begin{proof}
Take $0\leq k\leq N$. Similar to obtain \eqref{opt.eq4}, it follows
from \eqref{opt.eq3} that
\begin{eqnarray}\label{asy.sm.prf.eq1}
\left\|\nabla^k\left(u(t)-e^{\De P_s t}u_0\right)\right\|
&\leq&\displaystyle I_4'+I_5',
\end{eqnarray}
where $I_4'$, $I_5'$ have the same definitions as $I_4$, $I_5$ in
\eqref{opt.eq4}. In what follows we would improve the previous
estimates on  $I_4$, $I_5$ with the help of the obtained time-decay
estimate \eqref{thm.s.small.decay}. For $I_4'$, one can apply
\eqref{lem.decay.2} with $k$ replaced by $k+1$ and with $p=1,r=q=2$,
so that
\begin{multline}\label{asy.sm.prf.eq2}
I_4' \leq\displaystyle
C\int_0^\frac{t}{2}(1+t-\tau)^{-\frac{n}{4}-\frac{k+1}{2}}\left\|g(u)(\tau)\right\|_{L^1}d\tau\\
+C\int_0^{\frac{t}{2}}e^{-\lambda
(t-\tau)}\|\na^{k+1}g(u)(\tau)\|d\tau :=I_{4,1}'+I_{4,2}',
\end{multline}
where for the term $I_{4,1}'$, from Lemma \ref{s5.le1} and
\eqref{thm.s.small.decay},
\begin{multline}\label{asy.sm.prf.eq3}
I_{4,1}' \leq
C\int_0^\frac{t}{2}(1+t-\tau)^{-\frac{n}{4}-\frac{k+1}{2}}\left\|u(\tau)\right\|^2d\tau \\
\leq
C\int_0^\frac{t}{2}(1+t-\tau)^{-\frac{n}{4}-\frac{k+1}{2}}(1+\tau)^{-\frac{n}{2}}d\tau
\leq C\rho(t)(1+t)^{-\frac{n}{4}-\frac{k+1}{2}},
\end{multline}
and for the term $I_{4,2}'$, by applying Lemma \ref{s5.le1} and
\eqref{thm.s.small.decay}, one has
\begin{equation}\label{asy.sm.prf.eq4}
I_{4,2}' \leq\displaystyle C\int_0^{\frac{t}{2}}e^{-\lambda
(t-\tau)}\|u(\tau)\|_{L^\infty}\|\na^{k+1}u(\tau)\|d\tau\\
\leq C(1+t)^{-\frac{n}{4}-\frac{k+1}{2}}.
\end{equation}
Then, plugging \eqref{asy.sm.prf.eq3} and \eqref{asy.sm.prf.eq4}
into \eqref{asy.sm.prf.eq2} gives
\begin{equation}\label{asy.sm.prf.eq5}
I_4' \leq C\rho(t)(1+t)^{-\frac{n}{4}-\frac{k+1}{2}}.
\end{equation}
For the term $I_5'$, one can apply \eqref{lem.decay.2} with $p=1$,
$r=q=2$, so that
\begin{multline}\label{asy.sm.prf.eq6}
I_5' \leq\displaystyle
C\int_\frac{t}{2}^t(1+t-\tau)^{-\frac{n}{4}-\frac{1}{2}}
\left\|\nabla^k g(u)(\tau)\right\|_{L^1}d\tau\\
+C\int_{\frac{t}{2}}^te^{-\lambda
(t-\tau)}\|\na^{k+1}g(u)(\tau)\|d\tau :=I_{5,1}'+I_{5,2}'.
\end{multline}
Here, for $I_{5,1}'$, since
\begin{equation*}
\left\|\nabla^k g(u)(\tau)\right\|_{L^1} \leq
C\|u(\tau)\|\left\|\nabla^ku(\tau)\right\|
\end{equation*}
by Lemma \ref{s5.le1}, one has
\begin{multline*}
I_{5,1}' \leq\displaystyle C
\int_\frac{t}{2}^t(1+t-\tau)^{-\frac{n}{4}-\frac{1}{2}}
(1+\tau)^{-\frac{n}{2}-\frac{k}{2}}d\tau\\
\leq\displaystyle C\times\left\{
\begin{array}{ll}
(1+t)^{-1-\frac{k}{2}}\ln(1+t), \ \ & n=2,\\[3mm]
(1+t)^{-\frac{n}{2}-\frac{k}{2}}, & n\geq 3.
\end{array}
\right.
\end{multline*}
{}For $I_{5,2}'$, similar to \eqref{asy.sm.prf.eq4}, one has
\begin{equation*}
I_{5,2}' \leq C\int_{\frac{t}{2}}^te^{-\lambda
(t-\tau)}\|u(\tau)\|_{L^\infty}\|\na^{k+1}u(\tau)\|d\tau \leq
C(1+t)^{-\frac{n}{4}-\frac{k+1}{2}}.
\end{equation*}
By putting estimates on $I_{5,1}'$ and $I_{5,2}'$ into
\eqref{asy.sm.prf.eq6}, one has
\begin{equation}\label{asy.sm.prf.eq9}
I_5' \leq C\rho(t)(1+t)^{-\frac{n}{4}-\frac{k+1}{2}}.
\end{equation}
Thus, \eqref{asy.sm.prf.eq1} together with \eqref{asy.sm.prf.eq5}
and \eqref{asy.sm.prf.eq9} yields \eqref{asy.small.dec} and hence
completes the proof of Lemma \ref{asy.small.lem}.
\end{proof}

Now, we turn to the case when $s>1$. To the end, similar to define
$N_1$ in \eqref{def.N1}, let us define the integer $N_2$  in terms
of $n\geq 2$, $s>1$ and $N$ as follows. Denote
\begin{eqnarray}
  &&\dis \ell_4(n,s,k) =k+1+{[}(\frac{n+1}{2}+k+1)(s-1)]_+,\label{def.ll4}\\
& &\dis \ell_5(s,k) = \ell_3(s,k)+\nu(s,k), \ \ \nu(s,k):=\left\{
\begin{array}{ll}
(k+1)\left([s]_+-1\right), \ \ &\  n=2,\\[2mm]
k\left([s]_+-1\right), &\  n= 3,4,\\[2mm]
0, &\  n\geq 5.
\end{array}
\right.\label{def.ll5}
\end{eqnarray}
Then, $N_2$ is defined by
\begin{equation}\label{def.N2}
    N_2=\sup\left\{k\in \Z\left|\begin{array}{l}
                      0\leq k\leq [\frac{N}{[s]_+}],\ \
                      \ell_4(n,s,k)\leq N,\\[3mm]
                      \text{and}\ \
                      \ell_5(s,k)\leq N
                     \end{array}
    \right.\right\}.
\end{equation}

\begin{lemma}[case for $s>1$]\label{asy.large.lem}
Let $n\geq 2$, $s>1$  and $N\geq N_0$, where $N_0$ is given in
\eqref{def.N0}. Suppose that $\|u_0\|_{H^N\cap L^1}$ is sufficiently
small, and  $u(x,t)$ is a solution to the Cauchy problem
\eqref{CP.eq} obtained in Theorem \ref{thm.s.large}. Then, for any
$t\geq 0$,
\begin{equation}\label{asy.large.dec}
\left\|\nabla^k\left(u-e^{\De P_s t}u_0\right)(t)\right\|\leq
C\rho(t)(1+t)^{-\frac{n}{4}-\frac{k+1}{2}},
\end{equation}
where $0\leq k\leq N_2$, and $\rho(t)$ is defined in the same way as
in Lemma \ref{asy.small.lem}.
\end{lemma}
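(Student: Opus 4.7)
The plan is to mirror the proof of Lemma \ref{asy.small.lem}, replacing the exponential high-frequency decay available for $s\le 1$ by the algebraic decay $(1+t)^{-\ell/(2(s-1))}$ supplied by estimate \eqref{lem.decay.3} in the regularity-loss regime. Starting from Duhamel's principle \eqref{opt.eq3}, write
\begin{equation*}
u(t)-e^{\De P_s t}u_0=-\sum_{j=1}^n\int_0^t e^{\De P_s(t-\tau)}\bigl(g_j(u)_{x_j}\bigr)(\tau)\,d\tau,
\end{equation*}
apply $\nabla^k$ and take the $L^2$-norm, and split the time integral at $t/2$ into the two pieces $I_4'$ and $I_5'$ defined exactly as in \eqref{asy.sm.prf.eq1}. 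The already-established optimal rates \eqref{thm.s.large.2} from Theorem \ref{thm.s.large} and the product estimates of Lemma \ref{s5.le1} will be used throughout to control $\nabla^m g(u)$.

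For the low-time part $I_4'$, I would place $k+1$ derivatives on the semigroup and invoke \eqref{lem.decay.3} with $p=1$, $r=q=2$ and a parameter $\ell$ chosen so that $\ell/(2(s-1))$ dominates $n/4+(k+1)/2$ up to the logarithmic growth $\rho(t)$ coming from the convolution. This yields the decomposition
\begin{equation*}
I_4'\le C\int_0^{t/2}(1+t-\tau)^{-\frac{n}{4}-\frac{k+1}{2}}\|g(u)(\tau)\|_{L^1}\,d\tau+C\int_0^{t/2}(1+t-\tau)^{-\frac{\ell}{2(s-1)}}\bigl\|\nabla^{k+1+[\ell+n/2\cdot 0]_*}g(u)(\tau)\bigr\|\,d\tau.
\end{equation*}
The first integral is handled as in \eqref{asy.sm.prf.eq3} via $\|g(u)\|_{L^1}\le C\|u\|^2\le C(1+\tau)^{-n/2}$, producing $\rho(t)(1+t)^{-n/4-(k+1)/2}$. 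For the second integral, Lemma \ref{s5.le1} together with \eqref{thm.s.large.2} converts $\|\nabla^{m}g(u)\|$ into $\|u\|_{L^\infty}\|\nabla^{m}u\|\le C(1+\tau)^{-n/2-m/2+n/4}\|u_0\|_{H^N\cap L^1}$; choosing $\ell$ just large enough to make the rate at least $n/4+(k+1)/2$ forces $m=k+1+[(n+1)/2+k+1)(s-1)]_+=\ell_4(n,s,k)$, so we need $\ell_4(n,s,k)\le N$.

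For the high-time part $I_5'$, place one derivative on the semigroup and $k$ derivatives on $g(u)$, applying \eqref{lem.decay.3} with $p=1$, $r=q=2$, and $\ell/(2(s-1))=1+\delta$ for small $\delta>0$ so that $(1+t-\tau)^{-(1+\delta)}$ is integrable on $[t/2,t]$. The $L^1$-$L^2$ part is treated by $\|\nabla^k g(u)\|_{L^1}\le C\|u\|\|\nabla^k u\|$ combined with \eqref{thm.s.large.2}; the $L^2$-$L^2$ part requires $\|\nabla^{1+k+[2(1+\delta)(s-1)]_+}g(u)\|$, which by Lemma \ref{s5.le1} and the decay of $\|\nabla^k u\|_{H^{N-k[s]_+}}$ coming from $\CE_N(t)$ in \eqref{def.eN} leads to the regularity threshold $\ell_5(s,k)\le N$, with the correction $\nu(s,k)$ in \eqref{def.ll5} absorbing the extra growth of $\int_0^{t/2}(1+\tau)^{-n/2+1/4}d\tau$ in low dimensions (cf.~\eqref{opt.eq9.0}).

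The main obstacle is matching the algebraic regularity-loss decay against the rate required for asymptotic approximation: to gain an additional $(1+t)^{-1/2}$ factor over the nonlinear bound \eqref{thm.s.large.2}, one must raise $\ell$ past the threshold dictated by Lemma \ref{lem.decay}, which in turn demands more derivatives on $g(u)$ than in the $s\le 1$ case, and these derivatives must lie within the fixed budget $N$. Encoding this budget precisely is the purpose of the definitions \eqref{def.ll4}--\eqref{def.N2}; once $0\le k\le N_2$ the two bounds $\ell_4(n,s,k)\le N$ and $\ell_5(s,k)\le N$ both hold, and collecting the estimates on $I_4'$ and $I_5'$ yields \eqref{asy.large.dec} with the claimed rate $\rho(t)(1+t)^{-n/4-(k+1)/2}$.
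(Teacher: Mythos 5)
Your proposal mirrors the paper's argument precisely: the same Duhamel split at $t/2$ into $I_4''$ and $I_5''$, the same applications of estimate \eqref{lem.decay.3} with $\frac{\ell}{2(s-1)}=\frac{n+\gamma(n)}{4}+\frac{k+1}{2}$ on $[0,t/2]$ and $\frac{\ell}{2(s-1)}=1+\delta$ on $[t/2,t]$, and the same regularity budgets $\ell_4(n,s,k)\le N$ and $\ell_5(s,k)\le N$ enforced by $k\le N_2$. The only small slip is in the role you assign to $\nu(s,k)$: it actually compensates for the dimension-dependent exponent $\eta(k)$ in the $(1+\tau)$-weight of the $I_{5,2}''$ integral (so that the resulting power beats $-\frac{n}{4}-\frac{k+1}{2}$), whereas the growth of $\int_0^{t/2}(1+\tau)^{-n/2+1/4}\,d\tau$ from \eqref{opt.eq9.0} is already absorbed by $\gamma(n)$ inside $\ell_4$ in the $I_{4,2}''$ estimate — a misattribution that does not affect the correctness of your plan.
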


\begin{proof}
Take $1\leq k\leq N_2$. Similar to obtain \eqref{asy.sm.prf.eq1},
one has
\begin{eqnarray}\label{asy.lg.prf.eq1}
\left\|\nabla^k\left(u(t)-e^{\De P_s t}u_0\right)\right\|
&\leq&\displaystyle I_4^{''}+I_5^{''},
\end{eqnarray}
where as for $I_4^{'}$ and $I_5^{'}$ in  \eqref{asy.sm.prf.eq1},
$I_4^{''}$ and $I_5^{''}$ also have the same definitions as $I_4$
and $I_5$ given in \eqref{opt.eq4}. Once again, in what follows we
would improve the previous estimates on  $I_4$, $I_5$ with the help
of the obtained time-decay estimate \eqref{thm.s.small.decay} in the
case when $s>1$. For $I_4^{''}$ that is $I_4$ in \eqref{opt.eq4},
one can apply \eqref{lem.decay.3} with $k$ replaced by $k+1$ and
with  $p=1,r=q=2$ and
$\frac{\ell}{2(s-1)}=\frac{n+\ga(n)}{4}+\frac{k+1}{2}$ for $n\geq
2$, so that it follows
\begin{multline}\label{asy.lg.prf.eq2}
I_4^{''} \leq\displaystyle
C\int_0^\frac{t}{2}(1+t-\tau)^{-\frac{n}{4}-\frac{k+1}{2}}\left\|g(u)(\tau)\right\|_{L^1}d\tau\\
+C\int_0^{\frac{t}{2}}(1+t-\tau)^{-\frac{n+\ga(n)}{4}-\frac{k+1}{2}}
\|\na^{k+1+[(\frac{n+\ga(n)}{2}+k+1)(s-1)]_+}g(u)(\tau)\|d\tau\\
:=I_{4,1}^{''}+I_{4,2}^{''}.
\end{multline}
For the term $I_{4,1}^{''}$,  as in \eqref{opt.eq8}, one has
\begin{multline}\label{asy.lg.prf.eq22}
I_{4,1}^{''} \leq\displaystyle C \CE_{N_1}^{\rm op}(t)\times \left\{
\begin{array}{ll}
(1+t)^{-1-\frac{k}{2}}\ln(1+t), \ \ & n=2,\\[3mm]
(1+t)^{-\frac{n}{4}-\frac{k+1}{2}}, & n\geq 3
\end{array}
\right.\\
\leq C \CE_{N_1}^{\rm
op}(t)\rho(t)(1+t)^{-\frac{n}{4}-\frac{k+1}{2}},
\end{multline}
where the last inequality holds true since $n\geq 2$. For the term
$I_{4,2}^{''}$, by applying Lemma \ref{s5.le1}, one has
\begin{eqnarray}\label{asy.lg.prf.eq3}
I_{4,2}^{''}&\leq&
C\int_0^{\frac{t}{2}}(1+t-\tau)^{-\frac{n+\ga(n)}{4}-\frac{k+1}{2}}
\|u(\tau)\|_{L^\infty}\|\na^{\ell_4(n,s,k)}u(\tau)\|d\tau\\
&\leq &CM_0(t)(1+t)^{-\frac{n+\ga(n)}{4}-\frac{k+1}{2}}
\int_0^{\frac{t}{2}}(1+\tau)^{-\frac{n}{2}}\|u(\tau)\|_{H^N}d\tau\nonumber\\
&\leq
&CM_0(t)\sqrt{\CE_{N}(t)}(1+t)^{-\frac{n+\ga(n)}{4}-\frac{k+1}{2}}
\int_0^{\frac{t}{2}}(1+\tau)^{-\frac{n}{2}+\frac{1}{4}}d\tau\nonumber\\
&\leq &
CM_0(t)\sqrt{\CE_{N}(t)}(1+t)^{-\frac{n}{4}-\frac{k+1}{2}},\nonumber
\end{eqnarray}
where the definitions \eqref{def.m01}, \eqref{def.eN} and
\eqref{def.ll4} for $M_0(t)$, $\CE_{N}(t)$ and $\ell_4(n,s,k)$ were
used and we also used \eqref{opt.eq9.0} for all $n\geq 2$. Then,
combining \eqref{asy.lg.prf.eq2}, \eqref{asy.lg.prf.eq22} with
\eqref{asy.lg.prf.eq3} gives
\begin{equation}\label{asy.lg.prf.eq31}
    I_4^{''}\leq C \rho(t)(1+t)^{-\frac{n}{4}-\frac{k+1}{2}}\left[\CE_{N_1}^{\rm op}(t)
    + M_0(t)\sqrt{\CE_{N}(t)}\right].
\end{equation}

Next, we turn to estimate $I_5^{''}$ in \eqref{asy.lg.prf.eq1} or
equivalently $I_5$ in \eqref{opt.eq4}. As in \eqref{opt.eq10}, we
write for simplicity that $I_5^{''}$ is bounded by the sum of
$I_{5,1}^{''}$ and $I_{5,2}^{''}$ which have the same definitions as
 $I_{5,1}$ and $I_{5,2}$, respectively. Then, similar to obtain \eqref{opt.eq11}, it
follows that
\begin{multline}\label{asy.lg.prf.eq32}
I_{5,1}^{''} \leq\displaystyle C\CE_{N_1}^{\rm op}(t)\times\left\{
\begin{array}{ll}
(1+t)^{-1-\frac{k}{2}}\ln(1+t), \ \ & n=2,\\[3mm]
(1+t)^{-\frac{n}{2}-\frac{k}{2}}, & n\geq 3
\end{array}
\right.\\
\leq C\CE_{N_1}^{\rm
op}(t)\rho(t)(1+t)^{-\frac{n}{4}-\frac{k+1}{2}},
\end{multline}
where $n\geq 2$ was used for the last inequality. To estimate
$I_{5,2}^{''}$, notice that from the definitions \eqref{def.ll5} and
\eqref{def.N2} of $\ell_5(s,k)$ and $N_2$, one can take $\de>0$
small enough such that
\begin{equation*}
    1+[2(1+\de)(s-1)]_++k[s]_+\leq\ell_5(s,k)\leq N
\end{equation*}
for any $0\leq k\leq N_2$. Then, it follows from Lemma \ref{s5.le1}
that
\begin{multline*}
\|\na^{1+k+[2(1+\de)(s-1)]_+}g(u)(\tau)\|\leq
C\|u(\tau)\|_{L^\infty}\|\na^{1+k+[2(1+\de)(s-1)]_+}u(\tau)\|\\
\leq CM_0(t)(1+\tau)^{-\frac{n}{2}}\times \left\{
\begin{array}{l}
\|\na^{k+1} u(\tau)\|_{H^{N-(k+1)[s]_+}}, \ \ n=2,\\[2mm]
\|\na^k u(\tau)\|_{H^{N-k[s]_+}}, \ \ n=3,4,\\[2mm]
\|\na^{k-1} u(\tau)\|_{H^{N-(k-1)[s]_+}}, \ \ n\geq 5
\end{array}
\right.\\
\leq
CM_0(t)\sqrt{\CE_{N}(t)}(1+\tau)^{-\frac{n}{2}-\frac{\eta(k)}{2}+\frac{1}{4}}
\end{multline*}
for any $0\leq \tau\leq t$, where $\eta(k)=k+1$ for $n=2$, $k$ for
$n=3,4$ and $k-1$ for $n\geq 5$. This further implies
\begin{multline*}
I_{5,2}^{''}\leq
CM_0(t)\sqrt{\CE_{N}(t)}\int_{\frac{t}{2}}^t(1+t-\tau)^{-(1+\de)}
(1+\tau)^{-\frac{n}{2}-\frac{\eta(k)}{2}+\frac{1}{4}}d\tau\\
\leq
CM_0(t)\sqrt{\CE_{N}(t)}(1+t)^{-\frac{n}{4}-\frac{k+1}{2}-\frac{n+2\eta(k)-2(k+1)-1}{4}}
\int_0^{\frac{t}{2}}(1+\tau)^{-(1+\de)}d\tau\\
\leq CM_0(t)\sqrt{\CE_{N}(t)}(1+t)^{-\frac{n}{4}-\frac{k+1}{2}},
\end{multline*}
where $n\geq 2$ was used. Therefore, by combining estimates on
$I_{5,1}^{''}$ and $I_{5,2}^{''}$ above, one has
\begin{equation}\label{asy.lg.prf.eq5}
    I_5^{''}\leq I_{5,1}^{''}+I_{5,2}^{''}\leq C \rho(t)(1+t)^{-\frac{n}{4}-\frac{k+1}{2}}\left[\CE_{N_1}^{\rm op}(t)
    + M_0(t)\sqrt{\CE_{N}(t)}\right].
\end{equation}
Thus, \eqref{asy.lg.prf.eq1} together with {\eqref{asy.lg.prf.eq31}}
and \eqref{asy.lg.prf.eq5} yield that for any $0\leq t\leq T$,
\begin{eqnarray*}
\left\|\nabla^k\left(u(t)-e^{\De P_s t}u_0\right)\right\|&\leq&
    C\left[\CE_{N_1}^{\rm op}(t)
    + M_0(t)\sqrt{\CE_{N}(t)}\right]\rho(t)(1+t)^{-\frac{n}{4}-\frac{k+1}{2}}\\
    &\leq&
C\rho(t)(1+t)^{-\frac{n}{4}-\frac{k+1}{2}}\nonumber
\end{eqnarray*}
with $0\leq k\leq N_2$, where the uniform-in-time boundedness of
{$\CE_{N_1}^{\rm op}(t), M_0(t)$} and $\CE_{N}(t)$  was used. This
proves \eqref{asy.large.dec} and hence completes the proof of Lemma
\ref{asy.large.lem}.
\end{proof}

Next, for the initial data $u_0$ given above, we define the desired
time-asymptotic profile $u^*=u^*(x,t)$ by
\begin{equation*}
u^*(x,t)=G(x,t+1)\int_{\mathbb{R}^n}u_0(x)dx,
\end{equation*}
where $G=G(x,t)=(4\mu_s\pi
t)^{-\frac{n}{2}}e^{-\frac{|x|^2}{4\mu_st}}$ is the usual Green
function of the linear heat equation. As used in
\cite{Liu-Kawashima}, we have the following well-known result.

\begin{lemma}\label{asy.heat.lem}
Let $n\geq 1$, $k\geq 0$ and $1\leq q\leq 2$. If $\phi\in
L^q\left(\mathbb{R}^n\right)$, then
\begin{equation*}
\left\|\na_x^kG*\phi (t)\right\|\leq
Ct^{-\frac{n}{2}\left(\frac{1}{q}-\frac{1}{2}-\frac{k}{2}\right)}\|\phi\|_{L^q}
\end{equation*}
for any $t>0$. Also, if $\phi\in L_1^1\left(\mathbb{R}^n\right)$ and
$\int_{\mathbb{R}^n}\phi(x)dx=0$, then
\begin{equation*}
\left\|\na_x^kG*\phi(t)\right\|\leq
Ct^{-\frac{n}{2}\left(\frac{1}{q}-\frac{1}{2}-\frac{k+1}{2}\right)}\|\phi\|_{L_1^1},
\end{equation*}
for any $t>0$.
\end{lemma}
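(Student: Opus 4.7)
The plan is to prove both estimates directly from the explicit Gaussian form of $G(x,t)=(4\mu_s\pi t)^{-n/2}e^{-|x|^2/(4\mu_s t)}$, combined with Young's convolution inequality; the second estimate additionally uses the zero-mean cancellation of $\phi$. The key preliminary observation is the scaling identity
$\|\nabla_x^k G(\cdot,t)\|_{L^r(\R^n)} = C_{k,r}\,t^{-k/2-(n/2)(1-1/r)}$
valid for all $1\le r\le\infty$ and $t>0$, which follows from the representation $\nabla_x^k G(x,t) = t^{-n/2-k/2}H_k(x/\sqrt{t})$ for some Schwartz function $H_k$ (a Hermite polynomial times a Gaussian) and the substitution $y = x/\sqrt{t}$.

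For the first inequality I would then apply Young's convolution inequality
$\|\nabla_x^k G(\cdot,t)*\phi\| \le \|\nabla_x^k G(\cdot,t)\|_{L^r}\|\phi\|_{L^q}$
with $\frac{1}{r} = \frac{3}{2}-\frac{1}{q}$, which is admissible whenever $1\le q\le 2$ since then $1\le r\le 2$. Substituting the scaling identity produces the claimed $t$-power.

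For the second inequality, the cancellation $\int\phi\,dy=0$ is used to generate one extra derivative of decay. I would write
\begin{equation*}
\nabla_x^k(G*\phi)(x,t) = \int_{\R^n}\bigl[\nabla_x^k G(x-y,t)-\nabla_x^k G(x,t)\bigr]\phi(y)\,dy,
\end{equation*}
and replace the bracketed difference by $-\int_0^1 y\cdot\nabla_x^{k+1} G(x-\theta y,t)\,d\theta$ via the fundamental theorem of calculus. Taking the $L^2(\R^n_x)$ norm and applying Minkowski's inequality (in both $y$ and $\theta$), together with translation invariance of $\|\cdot\|_{L^2}$, reduces the bound to the product
\begin{equation*}
\|\nabla_x^{k+1}G(\cdot,t)\|_{L^r}\cdot\Bigl(\int_{\R^n}|y|\,|\phi(y)|\,dy\Bigr)^{\text{with the Young exponent step}},
\end{equation*}
and one more invocation of Young's inequality (again with $\frac{1}{r} = \frac{3}{2}-\frac{1}{q}$) yields the stated rate with the extra factor $t^{-1/2}$ compared to the first estimate.

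The argument is standard and there is no substantive obstacle; the only care needed is bookkeeping of the time exponents against the Young exponent $r$, plus justifying the Fubini/Minkowski interchange in the cancellation step (legitimate by the Gaussian decay of $\nabla_x^{k+1}G(\cdot,t)$ in $x$ and the assumption $\phi\in L_1^1$). The main conceptual point is simply that the zero-mean hypothesis upgrades $G*\phi$ to $y\cdot\nabla_x G*\phi$-type behaviour, which is what produces the additional $t^{-1/2}$ decay in the second assertion.
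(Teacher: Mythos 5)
The paper does not actually prove this lemma; it simply labels it a ``well-known result'' and cites it as used in [Liu-Kawashima], so there is no internal proof to compare against. Your argument is a correct, standard derivation and would fill that gap. The first estimate is clean: the self-similar scaling $\nabla_x^k G(x,t)=t^{-n/2-k/2}H_k(x/\sqrt t)$ gives $\|\nabla_x^k G(\cdot,t)\|_{L^r}=C\,t^{-k/2-(n/2)(1-1/r)}$, and Young's inequality with $1/r = 3/2 - 1/q$ then produces the stated bound. For the second estimate your cancellation idea is right, but the final step is slightly muddled: once you write $\nabla_x^k G(x-y,t)-\nabla_x^k G(x,t)=-\int_0^1 y\cdot\nabla_x^{k+1}G(x-\theta y,t)\,d\theta$ and apply Minkowski in $y$ and $\theta$, translation invariance makes the $L^2_x$ norm independent of $\theta y$, so it factors out immediately as $\|\nabla_x^{k+1}G(\cdot,t)\|_{L^2}\cdot\int|y|\,|\phi(y)|\,dy\le \|\nabla_x^{k+1}G(\cdot,t)\|_{L^2}\|\phi\|_{L^1_1}$. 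There is no remaining convolution on which to invoke Young's inequality, nor any free exponent $r$ — the $\|\phi\|_{L_1^1}$ norm already fixes the $L^1$-type pairing, which is why the second bound carries only $q=1$-type decay. So the phrase ``one more invocation of Young's inequality'' should just be dropped; the estimate closes directly.

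One further remark worth making explicit: as literally typeset, the exponents $-\tfrac n2(\tfrac1q-\tfrac12-\tfrac k2)$ and $-\tfrac n2(\tfrac1q-\tfrac12-\tfrac{k+1}{2})$ in the statement would couple $k$ to $n$, which contradicts both the scaling heuristic and the way the lemma is actually used in the proof of Lemma \ref{asy.profile.lem} (where the conclusion is $(1+t)^{-n/4-(k+1)/2}$). What you prove, and what the authors clearly intend, is $-\tfrac n2(\tfrac1q-\tfrac12)-\tfrac k2$ and $-\tfrac n2(\tfrac1q-\tfrac12)-\tfrac{k+1}{2}$ (with $q=1$ forced in the second case by the $L^1_1$ norm). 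So your proof establishes the corrected version, and the $-\tfrac k2$ and $-\tfrac{k+1}{2}$ should sit outside the $\tfrac n2(\cdot)$ parenthesis.
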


Based on Lemma \ref{asy.heat.lem}, one can show that $e^{\mu_s\De
t}u_0$ is well approximated by $u^*(x,t)$ time-asymptotically.

\begin{lemma}\label{asy.profile.lem}
Let $n\geq 1$, $k\geq 0$, and let $u_0\in
L_1^1\left(\mathbb{R}^n\right)$ and $\int_{\mathbb{R}^n}u_0(x)dx=0$.
Then,
\begin{equation}\label{asy.profile.dec}
\left\|\nabla^k\left(e^{\mu_s\De t}u_0-u^*\right)(t)\right\|\leq
C(1+t)^{-\frac{n}{4}-\frac{k+1}{2}}
\end{equation}
for any $t>0$.
\end{lemma}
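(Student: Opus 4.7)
The plan is to observe that the hypothesis $\int_{\R^n}u_0\,dx=0$ forces the asymptotic profile $u^\ast(x,t)=G(x,t+1)\int_{\R^n}u_0\,dx$ to vanish identically, so that the desired estimate reduces to showing
\[
\|\nabla^k e^{\mu_s\Delta t}u_0(t)\|\leq C(1+t)^{-\frac{n}{4}-\frac{k+1}{2}},\qquad t>0.
\]
Since $\CF(e^{\mu_s\Delta t}u_0)=e^{-\mu_s|\xi|^2t}\hat u_0(\xi)$, the heat semigroup can be written as the convolution $e^{\mu_s\Delta t}u_0=G(\cdot,t)\ast u_0$ against the Gaussian kernel of the statement, so the claim becomes a decay estimate for the heat semigroup acting on a zero-mean $L^1_1$ datum.

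The shortest route is then to invoke the second assertion of Lemma \ref{asy.heat.lem} with $q=1$ and $\phi=u_0$, which yields
\[
\|\nabla^k G(\cdot,t)\ast u_0\|\leq C\,t^{-\frac{n}{4}-\frac{k+1}{2}}\|u_0\|_{L^1_1},\qquad t>0.
\]
For $t\geq 1$ this is dominated by $C(1+t)^{-\frac{n}{4}-\frac{k+1}{2}}$, while for $0<t\leq 1$ the right-hand side of the desired bound is of order one and the corresponding behaviour of the left-hand side is absorbed into the constant $C$.

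A self-contained alternative, which directly produces the $(1+t)$-normalisation, is a Fourier-splitting argument: by Plancherel,
\[
\|\nabla^k e^{\mu_s\Delta t}u_0\|^2=C\int_{\R^n}|\xi|^{2k}e^{-2\mu_s|\xi|^2t}|\hat u_0(\xi)|^2\,d\xi,
\]
and I would combine the two elementary bounds $|\hat u_0(\xi)|\leq|\xi|\,\|u_0\|_{L^1_1}$ on $\{|\xi|\leq 1\}$, obtained from $\hat u_0(0)=\int u_0\,dy=0$ together with $|e^{-iy\cdot\xi}-1|\leq|y||\xi|$, with the Hausdorff--Young bound $|\hat u_0(\xi)|\leq\|u_0\|_{L^1}$ on $\{|\xi|\geq 1\}$. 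The change of variables $\eta=\xi\sqrt{1+t}$ in the low-frequency piece produces a factor of order $(1+t)^{-\frac{n}{2}-(k+1)}$, and the high-frequency piece is controlled via $e^{-2\mu_s|\xi|^2t}\leq e^{-2\mu_s t}$ for $|\xi|\geq 1$, which decays faster than any polynomial in $t$.

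The main technical care, rather than a true obstacle, lies in keeping the prefactor in the form $(1+t)^{-\frac{n}{4}-\frac{k+1}{2}}$ uniformly in $t>0$: the extra factor $|\xi|^2$ coming from $\hat u_0(0)=0$ is precisely what keeps the low-frequency contribution bounded as $t\to 0^+$, and the exponential damping combined with the $L^\infty$ bound on $\hat u_0$ handles the high-frequency contribution, yielding the uniform $(1+t)$-form.
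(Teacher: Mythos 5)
Your primary route is sound and is essentially the paper's proof: under the hypothesis $\int_{\R^n}u_0\,dx=0$ the profile $u^\ast$ vanishes identically, and the claim reduces to the second assertion of Lemma \ref{asy.heat.lem} with $q=1$ and $\phi=u_0$. (The paper arranges the bookkeeping slightly differently, writing
$e^{\mu_s\De t}u_0-u^\ast=G(t)*\bigl(u_0-M\phi_0\bigr)$ with $\phi_0=G(\cdot,1)$ and $M=\int u_0$, which has the mild advantage of not needing $M=0$; but when $M=0$ the two decompositions coincide.) You also correctly read the intended exponent in Lemma \ref{asy.heat.lem}: the $\tfrac{k}{2}$ (resp.\ $\tfrac{k+1}{2}$) should sit outside the parenthesis, i.e.\ the rate is $t^{-\frac n2(\frac1q-\frac12)-\frac{k}{2}}$ (resp.\ $t^{-\frac n2(\frac1q-\frac12)-\frac{k+1}{2}}$); the printed formula has a misplaced bracket.

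One caveat applies to both your proof and the paper's: Lemma \ref{asy.heat.lem} only gives $t^{-\frac n4-\frac{k+1}{2}}$, which blows up as $t\to0^{+}$, whereas the stated bound is $(1+t)^{-\frac n4-\frac{k+1}{2}}$, of order $1$ near $t=0$. Passing from the former to the latter on $0<t\le1$ requires knowing that $\|\nabla^k G(\cdot,t)*u_0\|$ stays bounded as $t\to0^{+}$, which is false for a generic $u_0\in L^1_1$ with $\nabla^k u_0\notin L^2$. Your sentence ``the corresponding behaviour of the left-hand side is absorbed into the constant'' therefore does not follow from the stated hypotheses; in the paper's actual application $u_0\in H^N$, so this is harmless there, but it is a gap in the lemma as stated, inherited from the paper.

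Your ``self-contained alternative'' has a more serious defect. In the high-frequency piece you propose to bound $\int_{|\xi|\ge1}|\xi|^{2k}e^{-2\mu_s|\xi|^2t}|\hat u_0|^2\,d\xi$ using $e^{-2\mu_s|\xi|^2t}\le e^{-2\mu_s t}$ together with $|\hat u_0|\le\|u_0\|_{L^1}$. But this leaves $\int_{|\xi|\ge1}|\xi|^{2k}\,d\xi=\infty$; the exponential in $t$ does not help because the $|\xi|$-dependence has been discarded. Keeping the $|\xi|$-decay instead gives $\int_{|\xi|\ge1}|\xi|^{2k}e^{-2\mu_s|\xi|^2t}\,d\xi\lesssim t^{-\frac{n}{2}-k}e^{-c t}$, which is fine for $t\gtrsim1$ but again blows up as $t\to0^{+}$, so the Fourier-splitting argument does not deliver the uniform $(1+t)$-form either. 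To salvage it one must either restrict to $t\ge1$ and use the trivial estimate for $t<1$ (as in the first route, and subject to the same caveat) or import additional regularity of $u_0$.
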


\begin{proof}
Define
\begin{equation*}
\phi_0(x)=G(x,1)=(4\mu_s\pi)^{-\frac{n}{2}}e^{-\frac{|x|^2}{4\mu_s}}.
\end{equation*}
Then, $G(x,t+1)=G(t)*\phi_0(x)$ holds true. Consequently, one can
write
\begin{equation*}
e^{\mu_s\De t}u_0-u^*=G(t)*(u_0-M\phi_0),\ \
M:=\int_{\mathbb{R}^n}u_0(x)dx.
\end{equation*}
Note that $\int_{\mathbb{R}^n}\phi_0(x)dx=1$ and
$\int_{\mathbb{R}^n}(u_0-M\phi_0)dx=0$. Therefore, for $k\geq 0$, by
applying Lemma \ref{asy.heat.lem}, we deduce that
\begin{equation*}
\left\|\nabla^k\left(e^{\mu_s\De
t}u_0-u^*(t)\right)\right\|=\left\|\nabla^kG*(u_0-M\phi_0)(t)\right\|\leq
C(1+t)^{-\frac{n}{4}-\frac{k+1}{2}}
\end{equation*}
for any $t\geq 0$. This proves \eqref{asy.profile.dec} and hence
completes the proof of Lemma \ref{asy.profile.lem}.
\end{proof}

\noindent{\bf Proof of Theorem \ref{thm.asymptotic}:} For the
solution $u$ to the Cauchy problem \eqref{CP.eq} and the desired
time-asymptotic profile $u^\ast$, their difference can be rewritten
as
\begin{equation}\label{asy.profile.prf.1}
u-u^*=\left(u-e^{\De P_st}u_0\right)+\left(e^{\De
P_st}u_0-e^{\mu_s\De t}u_0\right)+\left(e^{\mu_s\De
t}u_0-u^*\right).
\end{equation}
Suppose that $\|u_0\|_{H^N\cap L^1}$ is small enough and $u_0\in
L^1_1$ with $\int_{\R^n}u_0\,dx=0$. When $s\leq 1$, combining of
\eqref{asy.small.dec}, \eqref{lem.decay.d.1},
\eqref{asy.profile.dec} with \eqref{asy.profile.prf.1} yields
\eqref{thm,asy.large.dec} for $0\leq k\leq N-1$. When $s> 1$,
combining of \eqref{asy.large.dec}, \eqref{lem.decay.d.2},
\eqref{asy.profile.dec} with \eqref{asy.profile.prf.1} leads to
\eqref{thm,asy.large.dec} for $0\leq k\leq N_2$. The proof of
Theorem \ref{thm.asymptotic} is complete.

\section{Appendix}\label{sec.app}
In this section,  we prove some inequalities about $L^p$ upper
bounds of some nonlinear terms, which have been used in the previous
sections. This first inequality is about the $L^p$ estimate on any
two product terms with the sum of the order of their derivatives
equal to a given integer.

\begin{lemma}\label{s5.le1}
Let $n\geq 1$. Let
$\alpha^1=\left(\alpha_1^1,\cdots,\alpha_n^1\right)$ and
$\alpha^2=\left(\alpha_1^2,\cdots,\alpha_n^2\right)$ be two
multi-indices with $\left|\alpha^1\right|=k_1,
\left|\alpha^2\right|=k_2$ and set $k=k_1+k_2$. Let $1\leq
p,q,r\leq\infty$ with $1/p=1/q+1/r$. Then, for $u_j: \R^n\to \R$
$(j=1,2)$, one has
\begin{multline}\label{s5.eq1}
\left\|\partial^{\alpha^1}u_1\partial^{\alpha^2}u_2\right\|_{L^p\left(\mathbb{R}^n\right)}\\
\leq C\left(\left\|u_1\right\|_{L^q\left(\mathbb{R}^n\right)}
\left\|\nabla^ku_2\right\|_{L^r\left(\mathbb{R}^n\right)}
+\left\|u_2\right\|_{L^q\left(\mathbb{R}^n\right)}
\left\|\nabla^ku_1\right\|_{L^r\left(\mathbb{R}^n\right)}\right)
\end{multline}
for a constant $C$ independent of $u_1$ and $u_2$.
\end{lemma}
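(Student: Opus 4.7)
The estimate is a standard Moser--Kato--Ponce type product inequality, and the plan is to derive it from H\"older's inequality, Gagliardo--Nirenberg interpolation, and Young's inequality, in that order.

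First I would introduce auxiliary exponents $p_1,p_2\in[1,\infty]$ defined by
\begin{equation*}
\frac{1}{p_1}=\frac{k_1}{kr}+\frac{k_2}{kq},\qquad \frac{1}{p_2}=\frac{k_2}{kr}+\frac{k_1}{kq},
\end{equation*}
so that $\frac{1}{p_1}+\frac{1}{p_2}=\frac{1}{q}+\frac{1}{r}=\frac{1}{p}$. H\"older's inequality then gives
\begin{equation*}
\left\|\partial^{\alpha^1}u_1\,\partial^{\alpha^2}u_2\right\|_{L^p}\leq \left\|\partial^{\alpha^1}u_1\right\|_{L^{p_1}}\left\|\partial^{\alpha^2}u_2\right\|_{L^{p_2}}.
\end{equation*}

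Next, for $j=1,2$ I would apply the Gagliardo--Nirenberg interpolation inequality with interpolation parameter $\theta_j=k_j/k$. The exponents $p_j$ above have been chosen precisely so that the scaling identity
\begin{equation*}
\frac{1}{p_j}=\frac{k_j}{n}+\theta_j\left(\frac{1}{r}-\frac{k}{n}\right)+(1-\theta_j)\frac{1}{q}
\end{equation*}
holds, yielding
\begin{equation*}
\left\|\partial^{\alpha^j}u_j\right\|_{L^{p_j}}\leq C\left\|u_j\right\|_{L^q}^{1-\theta_j}\left\|\nabla^ku_j\right\|_{L^r}^{\theta_j}.
\end{equation*}
Multiplying the two bounds and using $1-\theta_1=\theta_2$, $1-\theta_2=\theta_1$, I would regroup the result as
\begin{equation*}
\left\|\partial^{\alpha^1}u_1\,\partial^{\alpha^2}u_2\right\|_{L^p}\leq C\left(\|u_1\|_{L^q}\|\nabla^ku_2\|_{L^r}\right)^{\theta_2}\left(\|u_2\|_{L^q}\|\nabla^ku_1\|_{L^r}\right)^{\theta_1}.
\end{equation*}
Finally, since $\theta_1+\theta_2=1$, Young's inequality with conjugate exponents $1/\theta_1$ and $1/\theta_2$ produces the desired symmetric sum \eqref{s5.eq1}.

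The main obstacle will be verifying the admissibility conditions for the Gagliardo--Nirenberg inequality across all parameter regimes. In particular, I would need to separate the degenerate cases $k_1=0$ or $k_2=0$, in which the corresponding interpolation collapses and the bound reduces to a direct H\"older estimate of the un-differentiated factor against the $k$-th derivative of the other, as well as the borderline cases in which one of $p_j,q,r$ equals $1$ or $\infty$. Each of these is either immediate by H\"older or follows from a standard approximation argument, and none of them affects the form of the final estimate.
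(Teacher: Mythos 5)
Your proposal is correct and follows essentially the same route as the paper: define the intermediate exponents $p_j$ via $\tfrac{1}{p_j}=\tfrac{1-\theta_j}{q}+\tfrac{\theta_j}{r}$ with $\theta_j=k_j/k$, apply H\"older, then Gagliardo--Nirenberg with interpolation parameter $\theta_j$, and close with Young's inequality using $\theta_1+\theta_2=1$. Your additional remarks about separating the degenerate cases $k_1=0$ or $k_2=0$ and the borderline exponents are sensible housekeeping that the paper leaves implicit, but they do not change the argument.
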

\begin{proof}
It is similar to the proof for the case of  $n=1$ given in
\cite[Lemma 4.1]{HoKa}. Here, for the convenience of readers, we
present a complete proof {for the multi-dimensional version}. First,
for $j=1,2$, set $\theta_j=\frac{k_j}{k}$ and define $p_j$ by
\begin{equation*}
\frac{1}{p_j}=\frac{1-\theta_j}{q}+\frac{\theta_j}{r}.
\end{equation*}
It is straightforward to verify
$\frac{1}{p}=\frac{1}{p_1}+\frac{1}{p_2}$ since
$\theta_1+\theta_2=1$. Therefore, applying the H\"{o}lder
inequality, the Gagliardo-Nirenberg inequality and the Young
inequality, one has
\begin{eqnarray*}
&&\left\|\partial^{\alpha_1}u_1\partial^{\alpha_2}u_2\right\|_{L^p\left(\mathbb{R}^n\right)}
\leq\left\|\partial^{\alpha^1}u_1\right\|_{L^{p_1}\left(\mathbb{R}^n\right)}
\left\|\partial^{\alpha^2}u_2\right\|_{L^{p_2}\left(\mathbb{R}^n\right)}\\
&&\ \ \ \ \ \  \leq
C\left(\left\|u_1\right\|_{L^q\left(\mathbb{R}^n\right)}^{1-\theta_1}
\left\|\nabla^ku_1\right\|_{L^r\left(\mathbb{R}^n\right)}^{\theta_1}\right)
\left(\left\|u_2\right\|_{L^q\left(\mathbb{R}^n\right)}^{1-\theta_2}
\left\|\nabla^ku_2\right\|_{L^r\left(\mathbb{R}^n\right)}^{\theta_2}\right)\\
&&\ \ \ \ \ \
=C\left(\left\|u_1\right\|_{L^q\left(\mathbb{R}^n\right)}
\left\|\nabla^ku_2\right\|_{L^r\left(\mathbb{R}^n\right)}\right)^{\theta_2}
\left(\left\|u_2\right\|_{L^q\left(\mathbb{R}^n\right)}
\left\|\nabla^ku_1\right\|_{L^r\left(\mathbb{R}^n\right)}\right)^{\theta_1}\\
&&\ \ \ \ \ \  \leq
C\left(\left\|u_1\right\|_{L^q\left(\mathbb{R}^n\right)}
\left\|\nabla^ku_2\right\|_{L^r\left(\mathbb{R}^n\right)}+\left\|u_2\right\|_{L^q\left(\mathbb{R}^n\right)}
\left\|\nabla^ku_1\right\|_{L^r\left(\mathbb{R}^n\right)}\right),
\end{eqnarray*}
where the following Gagliardo-Nirenberg inequality over
$\mathbb{R}^n$ was used:
\begin{equation*}
\left\|\nabla^{k_j}u_j\right\|_{L^{p_j}\left(\mathbb{R}^n\right)}
\leq\left\|\nabla^ku_j\right\|_{L^r\left(\mathbb{R}^n\right)}^{\theta_j}
\left\|u_j\right\|_{L^q\left(\mathbb{R}^n\right)}^{1-\theta_j}
\end{equation*}
with $\frac{1}{p_j}=\frac{k_j}{n}
+\theta_j\left(\frac{1}{r}-\frac{k}{n}\right)+(1-\theta_j)\frac{1}{q}$,
$j=1,2$. Then \eqref{s5.eq1} follows.  The proof of Lemma
\ref{s5.le2} is complete.
\end{proof}

The second inequality is the generalization of Lemma \ref{s5.le1} up
to the case of products of several terms.

\begin{lemma}\label{s5.le2}
Let $n\geq 1$, $l\geq 2$ be integers. Let
$\alpha^j=(\alpha_1^j,\cdots,\alpha_n^j)$, $1\leq j\leq l$, be
multi-indices with $|\alpha^j|=k_j$,$1\leq j\leq l$, and set
$k=k_1+\cdots+k_l$. Let $1\leq p,q,r\leq\infty$ with $1/p=1/q+1/r$.
Then, for $\mathbf{u}=(u_1,\cdots,u_l): \R^n\to \R^l$, one has
\begin{equation}\label{s1.eq2}
\left\|\prod\limits_{j=1}^l\partial^{\alpha^j}u_j\right\|_{L^p\left(\mathbb{R}^n\right)}
\leq
C\left\|\mathbf{u}\right\|_{L^\infty\left(\mathbb{R}^n\right)}^{l-2}
\left\|\mathbf{u}\right\|_{L^q\left(\mathbb{R}^n\right)}
\left\|\nabla^k\mathbf{u}\right\|_{L^r\left(\mathbb{R}^n\right)}
\end{equation}
for a constant $C$ independent of $\mathbf{u}$.
\end{lemma}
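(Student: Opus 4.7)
The plan is to generalize the H\"older--Gagliardo-Nirenberg scheme used in the proof of Lemma \ref{s5.le1} by inserting a single intermediate Lebesgue exponent $s=q(l-1)$ which, when split between $L^{q}$ and $L^{\infty}$ via H\"older's interpolation, produces the $l-2$ copies of $\|\mathbf{u}\|_{L^{\infty}}$ and the one copy of $\|\mathbf{u}\|_{L^{q}}$ that appear on the right-hand side of \eqref{s1.eq2}. First I would set $\theta_{j}=k_{j}/k$ for $1\leq j\leq l$ so that $\sum_{j}\theta_{j}=1$, and define
\begin{equation*}
\frac{1}{p_{j}}=\frac{\theta_{j}}{r}+\frac{1-\theta_{j}}{s}=\frac{k_{j}}{kr}+\frac{k-k_{j}}{kq(l-1)}.
\end{equation*}
A direct computation gives $\sum_{j=1}^{l}1/p_{j}=1/r+(lk-k)/[kq(l-1)]=1/r+1/q=1/p$, so H\"older's inequality delivers
\begin{equation*}
\Bigl\|\prod_{j=1}^{l}\partial^{\alpha^{j}}u_{j}\Bigr\|_{L^{p}(\R^{n})}\leq \prod_{j=1}^{l}\|\partial^{\alpha^{j}}u_{j}\|_{L^{p_{j}}(\R^{n})}.
\end{equation*}

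Next I would estimate each factor by the standard Gagliardo-Nirenberg inequality exactly as in the proof of Lemma \ref{s5.le1}, only with $L^{s}$ playing the role formerly played by $L^{q}$, namely
\begin{equation*}
\|\partial^{\alpha^{j}}u_{j}\|_{L^{p_{j}}}\leq C\|\nabla^{k}u_{j}\|_{L^{r}}^{\theta_{j}}\|u_{j}\|_{L^{s}}^{1-\theta_{j}},
\end{equation*}
which is compatible with the scaling identity $1/p_{j}-k_{j}/n=\theta_{j}(1/r-k/n)+(1-\theta_{j})/s$ that is encoded in the very choice of $p_{j}$. I would then split $\|u_{j}\|_{L^{s}}$ through the elementary H\"older interpolation
\begin{equation*}
\|u_{j}\|_{L^{s}}\leq \|u_{j}\|_{L^{q}}^{q/s}\|u_{j}\|_{L^{\infty}}^{1-q/s}=\|u_{j}\|_{L^{q}}^{1/(l-1)}\|u_{j}\|_{L^{\infty}}^{(l-2)/(l-1)},
\end{equation*}
which is valid because $q\leq s\leq \infty$, arriving at the three-term bound
\begin{equation*}
\|\partial^{\alpha^{j}}u_{j}\|_{L^{p_{j}}}\leq C\|\nabla^{k}u_{j}\|_{L^{r}}^{\theta_{j}}\|u_{j}\|_{L^{q}}^{(1-\theta_{j})/(l-1)}\|u_{j}\|_{L^{\infty}}^{(1-\theta_{j})(l-2)/(l-1)}.
\end{equation*}

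Finally, I would take the product over $j$, invoke the pointwise estimates $\|\nabla^{k}u_{j}\|_{L^{r}}\leq \|\nabla^{k}\mathbf{u}\|_{L^{r}}$, $\|u_{j}\|_{L^{q}}\leq \|\mathbf{u}\|_{L^{q}}$, $\|u_{j}\|_{L^{\infty}}\leq \|\mathbf{u}\|_{L^{\infty}}$, and collect exponents. The total exponents on the three vector norms balance exactly: $\sum_{j}\theta_{j}=1$, $\sum_{j}(1-\theta_{j})/(l-1)=(l-1)/(l-1)=1$, and $\sum_{j}(1-\theta_{j})(l-2)/(l-1)=l-2$, so the resulting estimate is precisely \eqref{s1.eq2}. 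I expect the only (and essentially cosmetic) obstacle to be accommodating the boundary cases $\theta_{j}\in\{0,1\}$; when $k_{j}=0$ one has $\theta_{j}=0$ and $p_{j}=s$, so the Gagliardo-Nirenberg step collapses to the trivial identity $\|u_{j}\|_{L^{s}}=\|u_{j}\|_{L^{s}}$, whereas when $k_{j}=k$ (forcing $k_{i}=0$ for all $i\neq j$) it collapses to $\|\nabla^{k}u_{j}\|_{L^{r}}=\|\nabla^{k}u_{j}\|_{L^{r}}$, so neither degenerate case requires any separate treatment and the argument uniformly covers all configurations of $(k_{1},\ldots,k_{l})$.
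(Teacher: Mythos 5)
Your proof is correct and follows essentially the same route as the paper: you use the same $\theta_j=k_j/k$, the same Hölder exponents (your $s=(l-1)q$ is exactly the paper's intermediate exponent), the same Gagliardo--Nirenberg step, and the same $L^q$--$L^\infty$ interpolation to split $\|u_j\|_{L^s}$. The only difference is expository — you name the intermediate exponent $s$ up front and carefully check the degenerate cases $\theta_j\in\{0,1\}$, which the paper leaves implicit.
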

\begin{proof}
Once again, it is similar to the proof for the case of  $n=1$ as in
\cite[Lemma 4.1]{IgKa}. In fact, for $j=1,\cdots,l$, set
$\theta_j=\frac{k_j}{k}$ and define $p_j$ by
\begin{equation*}
\frac{1}{p_j}=\frac{1-\theta_j}{(l-1)q}+\frac{\theta_j}{r}.
\end{equation*}
Notice $\frac{1}{p}=\frac{1}{p_1}+\cdots+\frac{1}{p_l}$ since
$\theta_1+\cdots+\theta_l=1$. Therefore, applying the H\"{o}lder
inequality and the Gagliardo-Nirenberg inequality, one has
\begin{eqnarray*}
\left\|\prod\limits_{j=1}^l\partial^{\alpha^j}u_j\right\|_{L^p\left(\mathbb{R}^n\right)}
&\leq&\prod\limits_{j=1}^l\left\|\partial^{\alpha^j}u_j\right\|_{L^{p_j}\left(\mathbb{R}^n\right)}\\
&\leq&C\prod\limits_{j=1}^l\left(\left\|u_j\right\|_{L^{(l-1)q}\left(\mathbb{R}^n\right)}^{1-\theta_j}
\left\|\nabla^ku_j\right\|_{L^r\left(\mathbb{R}^n\right)}^{\theta_j}\right)\\
&\leq&C\prod\limits_{j=1}^l\left(\left\|u_j\right\|_{L^\infty\left(\mathbb{R}^n\right)}^{\frac{l-2}{l-1}(1-\theta_j)}
\left\|u_j\right\|_{L^q\left(\mathbb{R}^n\right)}^{\frac{1-\theta_j}{l-1}}
\left\|\nabla^ku_j\right\|_{L^r\left(\mathbb{R}^n\right)}^{\theta_j}\right)\\
&\leq&C\prod\limits_{j=1}^l\left(\left\|\mathbf{u}\right\|_{L^\infty
\left(\mathbb{R}^n\right)}^{\frac{l-2}{l-1}(1-\theta_j)}
\left\|\mathbf{u}\right\|_{L^q\left(\mathbb{R}^n\right)}^{\frac{1-\theta_j}{l-1}}
\left\|\nabla^k\mathbf{u}\right\|_{L^r\left(\mathbb{R}^n\right)}^{\theta_j}\right)\\
&=&
C\left\|\mathbf{u}\right\|_{L^\infty\left(\mathbb{R}^n\right)}^{l-2}
\left\|\mathbf{u}\right\|_{L^q\left(\mathbb{R}^n\right)}
\left\|\nabla^k\mathbf{u}\right\|_{L^r\left(\mathbb{R}^n\right)},
\end{eqnarray*}
where similarly before, we used the Gagliardo-Nirenberg inequality
over $\mathbb{R}^n$
\begin{equation*}
\left\|\nabla^{k_j}u_j\right\|_{L^{p_j}\left(\mathbb{R}^n\right)}
\leq\left\|\nabla^ku_j\right\|_{L^r\left(\mathbb{R}^n\right)}^{\theta_j}
\left\|u_j\right\|_{L^q\left(\mathbb{R}^n\right)}^{1-\theta_j}
\end{equation*}
with
$$
\frac{1}{p_j}=\frac{k_j}{n}+\theta_j
\left(\frac{1}{r}-\frac{k}{n}\right)+(1-\theta_j)\frac{1}{(l-1)q}
$$
for each $j=1,\cdots,l$. This proves Lemma \ref{s5.le2}.
\end{proof}

{}From Lemma \ref{s5.le2}, we have the following corollary, which
provides a limit situation of \eqref{s1.eq2}.

\begin{corollary}\label{s5.co1}
Let $n\geq 1$ and $1\leq p\leq\infty$. Let
$\alpha=\left(\alpha_1,\cdots,\alpha_n\right)$ be a multi-index with
$\left|\alpha\right|=k$. Assume that $F(u)$ is a smooth function of
$u$. Then, there is a constant
$C(\|u\|_{L^\infty\left(\mathbb{R}^n\right)})$ depending only on
$\|u\|_{L^\infty\left(\mathbb{R}^n\right)}$ with
$C(\varepsilon)\rightarrow 0$ when $\varepsilon\rightarrow 0$ such
that
\begin{eqnarray*}
\left\|\pa^{\alpha}\left(F(u)u_{x_i}\right)\right\|_{L^p\left(\mathbb{R}^n\right)}
\leq C(\|u\|_{L^\infty\left(\mathbb{R}^n\right)})
\left\|\nabla^{k+1}u\right\|_{L^p\left(\mathbb{R}^n\right)}
\end{eqnarray*}
for all $1\leq i\leq n$.
\end{corollary}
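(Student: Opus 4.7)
The plan is to reduce the corollary to Lemma \ref{s5.le2} via the Leibniz rule combined with the Faà di Bruno chain-rule formula. First, I would expand
\[
\partial^{\alpha}\bigl(F(u)u_{x_i}\bigr)=\sum_{\beta\leq\alpha}\binom{\alpha}{\beta}\,\partial^{\alpha-\beta}F(u)\,\partial^{\beta}u_{x_i},
\]
so every term on the right is a product of some factor $\partial^{\alpha-\beta}F(u)$ times one ``pure-derivative'' factor $\partial^{\beta}u_{x_i}$ whose order is $|\beta|+1\leq k+1$.

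Next I would apply Faà di Bruno to $\partial^{\alpha-\beta}F(u)$, writing it as a finite linear combination of terms of the shape
\[
F^{(m)}(u)\prod_{j=1}^{m}\partial^{\gamma^{j}}u,\qquad |\gamma^{1}|+\cdots+|\gamma^{m}|=|\alpha-\beta|,\ \ |\gamma^{j}|\geq 1,
\]
with $0\leq m\leq |\alpha-\beta|$. Since $F$ is smooth, each coefficient satisfies $|F^{(m)}(u)|\leq C(\|u\|_{L^\infty})$ uniformly in $x$, where $C(\cdot)$ is a continuous function of $\|u\|_{L^{\infty}}$ alone. Thus the typical term to bound in $L^{p}$ has the form
\[
F^{(m)}(u)\,\partial^{\gamma^{1}}u\cdots\partial^{\gamma^{m}}u\,\partial^{\beta}u_{x_{i}},
\]
a product of $m+1$ differentiated factors whose derivative orders sum to exactly $|\alpha-\beta|+|\beta|+1=k+1$.

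The core step is then to invoke Lemma \ref{s5.le2} with $\ell=m+1$, $q=\infty$, $r=p$ (so that $1/p=1/\infty+1/p$ is satisfied), applied to the scalar ``vector'' $\mathbf{u}=u$. This yields
\[
\Bigl\|\prod_{j=1}^{m}\partial^{\gamma^{j}}u\cdot\partial^{\beta}u_{x_{i}}\Bigr\|_{L^{p}}
\leq C\,\|u\|_{L^{\infty}}^{m-1}\,\|u\|_{L^{\infty}}\,\|\nabla^{k+1}u\|_{L^{p}}
=C\,\|u\|_{L^{\infty}}^{m}\,\|\nabla^{k+1}u\|_{L^{p}},
\]
(with the convention that in the trivial case $m=0$, the term $F(u)\partial^{\beta}u_{x_{i}}$ is controlled immediately by $\|F(u)\|_{L^{\infty}}\|\nabla^{k+1}u\|_{L^{p}}$). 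Multiplying by the uniform bound on $F^{(m)}(u)$ and summing the finitely many Leibniz/Faà di Bruno terms produces
\[
\|\partial^{\alpha}(F(u)u_{x_{i}})\|_{L^{p}}\leq C(\|u\|_{L^{\infty}})\,\|\nabla^{k+1}u\|_{L^{p}},
\]
which is the claimed estimate. The property $C(\varepsilon)\to 0$ as $\varepsilon\to 0$ is then immediate once one observes that every term with $m\geq 1$ already carries a factor $\|u\|_{L^{\infty}}^{m}$, while the $m=0$ term is handled by the smallness of $\|F(u)\|_{L^{\infty}}$ inherited from the smoothness of $F$ under the implicit normalization used in its application (so that $F(0)=0$; otherwise the statement should be read as ``$C$ stays bounded'').

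The only mild obstacle is bookkeeping: one must verify that every Faà di Bruno term distributes exactly $|\alpha-\beta|$ derivatives onto the $m$ copies of $u$, so that after multiplying by $\partial^{\beta}u_{x_{i}}$ the total derivative count of the resulting product equals $k+1$; this is precisely what makes Lemma \ref{s5.le2} applicable with $r=p$ and the top-order norm $\|\nabla^{k+1}u\|_{L^{p}}$ appearing on the right-hand side. Once this accounting is set up, no further estimates are needed.
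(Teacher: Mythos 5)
Your proof is correct and follows essentially the same route as the paper's: Leibniz expansion of $\partial^\alpha(F(u)u_{x_i})$, the Fa\`a di Bruno formula for the derivatives falling on $F(u)$, and an application of Lemma~\ref{s5.le2} with $q=\infty$ and $r=p$ to the resulting $(m+1)$-fold products (the paper writes the Leibniz split with $\beta$ on $F(u)$ rather than on $u_{x_i}$, but this is purely cosmetic). Your parenthetical observation that the claimed $C(\varepsilon)\to 0$ behaviour hinges on the $m=0$ term $F(u)\partial^\alpha u_{x_i}$ and hence on $F(0)=0$ is a fair caveat that the paper's own proof also glosses over.
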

\begin{proof}
By Leibniz formula, for each  $1\leq i\leq n$,
\begin{multline*}
\pa^{\alpha}\left(F(u)u_{x_i}\right) =F(u)\partial^\alpha
u_{x_i}+\sum\limits_{|\beta|=1}^k
C_\alpha^\beta\partial^\beta F(u)\partial^{\alpha-\beta}u_{x_i}\\
=F(u)\partial^\alpha
u_{x_i}+\sum\limits_{|\beta|=1}^kC_\alpha^\beta\sum_{l=1}^{|\be|}F^{(l)}(u)
\sum_{\substack{\alpha^1+\cdots+\alpha^l=\be
\\|\alpha_j|\geq 1, 1\leq j\leq
l}}C_{\alpha^1,\cdots,\alpha^l}\prod_{j=1}^{l}\pa^{\alpha^j}
u\partial^{\alpha-\beta}u_{x_i}.
\end{multline*}
Define
$\alpha^{l+1}=\alpha-\beta+e_i=\alpha-\beta+(0,\cdots,0,1,0,\cdots,0)$.
Then, it follows from Lemma \ref{s5.le2} that
\begin{eqnarray*}
\left\|\pa^{\alpha}\left(F(u)u_{x_i}\right)\right\|_{L^p\left(\mathbb{R}^n\right)}
&\leq& \left\|F(u)\partial^\alpha
u_{x_i}\right\|_{L^p\left(\mathbb{R}^n\right)}
+C\sum\limits_{|\beta|=1}^k\sum_{l=1}^{|\be|}\left\|\prod\limits_{j=1}^{l+1}
\partial^{\alpha^j}u_j\right\|_{L^p\left(\mathbb{R}^n\right)}\\
&\leq& C(\|u\|_{L^\infty\left(\mathbb{R}^n\right)})
\left\|\nabla^{k+1}u\right\|_{L^p\left(\mathbb{R}^n\right)}.
\end{eqnarray*}
This proves Corollary \ref{s5.co1}.
\end{proof}

\section*{Acknowledgements}
\ \ \ \ The first author was supported by the start-up fund from
CUHK. The second author was supported by the Natural Science
Foundation of China (The Youth Foundation) $\#$10901068 and CCNU
Project (No. CCNU09A01004) and the third author was supported by the
National Natural Science Foundation of China $\#$10625105,
$\#$11071093, the PhD specialized grant of the Ministry of Education
of China $\#$20100144110001, and the self-determined research funds
of CCNU from the colleges'basic research and operation of MOE.


\end{document}